\theoremstyle{plain}
\newtheorem{thm}{Theorem}
\newtheorem{lem}[thm]{Lemma}
\newtheorem{prop}[thm]{Proposition}
\theoremstyle{definition}
\newtheorem{defn}[thm]{Definition}
\newtheorem{question}[thm]{Question}
\newtheorem{nota}[thm]{Notation}
\numberwithin{thm}{section} \numberwithin{equation}{section}
\newcommand{\ga}[2]{\begin{gather}\label{#1}#2 \end{gather}}
\newcommand{\sC}{{\mathcal C}}
\newcommand{\sD}{{\mathcal D}}
\newcommand{\sE}{{\mathcal E}}
\newcommand{\sF}{{\mathcal F}}
\newcommand{\sH}{{\mathcal H}}
\newcommand{\sL}{{\mathcal L}}
\newcommand{\sO}{{\mathcal O}}
\newcommand{\sP}{{\mathcal P}}
\newcommand{\sQ}{{\mathcal Q}}
\newcommand{\sR}{{\mathcal R}}
\newcommand{\sU}{{\mathcal U}}
\newcommand{\sW}{{\mathcal W}}
\def\wt#1{\widetilde {#1}}
\begin{document}

\title{Globally F-regular type of Moduli spaces}
\author{Xiaotao Sun}
\address{Center of Applied Mathematics, School of Mathematics, Tianjin University, No.92 Weijin Road, Tianjin 300072, P. R. China}
\email{xiaotaosun@tju.edu.cn}
\author{Mingshuo Zhou}
\address{Center of Applied Mathematics, School of Mathematics, Tianjin University, No.92 Weijin Road, Tianjin 300072, P. R. China}
\email{zhoumingshuo@amss.ac.cn}
\date{September 18, 2019}
\thanks{Both authors are supported by the National Natural Science Foundation of China No.11831013;
Mingshuo Zhou is also supported by the National Natural Science Foundation of China No.11501154}
\begin{abstract} We prove moduli spaces of semistable parabolic bundles and generalized parabolic sheaves
with fixed determinant on a smooth projective curve are globally $F$-regular type.
\end{abstract}
\keywords{Frobenius split,  Moduli spaces, Parabolic sheaves}
\subjclass{Algebraic Geometry, 14H60, 14D20}
\maketitle

\section{Introduction}

Let $X$ be a variety over a perfect field $k$ of characteristic $p>0$ and $F:X\to X$ be the Frobenius morphism. The $X$ is called
$F$-split (Frobenius split) if the natural homomorphism $\sO_X\hookrightarrow F_*\sO_X$ is split. Although most of
projective varieties are not $F$-split, some important varieties are $F$-split. For example,  flag varieties and
their Schubert subvarieties (cf. \cite{MeRa}, \cite{RR}), the product of
two flag varieties for the same group $G$ (cf. \cite{MeRa88}) and cotangent bundles of flag varieties (cf. \cite{KLT}) are proved to be $F$-split.
An example, which is more closer to this article, should be mentioned. Mehta-Ramadas proved in \cite{MeR} that for a generic
smooth projective curve $C$ of genus $g$ over an algebraically closed field of characteristic $p\ge 5$, the moduli space of semistable parabolic bundles of rank $2$ on $C$ is $F$-split, and made conjecture that moduli spaces of semistable
parabolic bundles of rank $2$ on any smooth curve $C$ with a fixed determinant is F-split.

The notion of globally $F$-regular variety was introduced by K. E. Smith in \cite{Sm}, a variety $X$ is called globally $F$-regular if for any effective
divisor $D$, the natural homomorphism $\sO_X\hookrightarrow F^e_*\sO_X(D)$ is split for some integer $e>0$. It is clear that globally $F$-regular varieties must be
$F$-split. Also, some well-known F-split varieties include toric varieties and Schubert varieties are proved (\cite{Sm}, \cite{LRT}) to be globally F-regular. Thus it is natural to extend Mehta-Ramadas conjecture: the moduli spaces $\sU^L_{C,\,\omega}$ of semistable parabolic bundles of rank $r$ with a
fixed determinant $L$ on any smooth curves $C$ (parabolic structures determined by a given data $ω$) are globally F-regular varieties. It remains
to be a very difficult open problem, we will study its characteristic zero analogy in this article.

A variety $X$ over a field of characteristic zero is called globally $F$-regular type (resp. $F$-split type) if its modulo $p$ reduction $X_p$ is
globally $F$-regular (resp. $F$-split) for a dense set of $p$. Projective varieties $X$, which are globally F-regular type, have remarkable geometric and
cohomological properties: (1) $X$ must be normal, Cohen-Macaulay with rational singularities, and must have log terminal singularities if it is $\mathbb{Q}$-Gorenstein;
(2) $H^i(X,\sL)=0$ for $i>0$ and nef line bundle $\sL$.

Let $\sU_{C,\,\omega}$ be moduli spaces of semistable parabolic bundles of rank $r$ and degree $d$ on smooth curves $C$ of genus $g\ge 0$ with parabolic
structures determined by $\omega=(k,\{\vec n(x), \vec a(x)\}_{x\in I})$ and
$${\rm det}: \sU_{C,\,\omega}\to J^d_C$$
be the determinant morphism. For any $L\in J^d_C$, the fiber $$\sU_{C,\,\omega}^L:={\rm det}^{-1}(L)$$ is called moduli space of semistable parabolic bundles with
a fixed determinant $L$. Then the first main result in this article is

\begin{thm}[See Theorem \ref{thm3.7}]\label{thm1.1} The moduli spaces $\sU_{C,\,\omega}^L$ are of globally $F$-regular type.
\end{thm}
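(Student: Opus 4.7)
The plan is to realize $\sU^L_{C,\omega}$ as a GIT quotient of a parameter scheme whose globally $F$-regular type can be established using an explicit description in terms of classically known globally $F$-regular building blocks (flag varieties, Grassmannians, affine spaces), and then to transfer the property through the quotient.

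I would begin with the standard GIT construction of $\sU^L_{C,\omega}$. Choose $m\gg 0$ so that every $\omega$-semistable parabolic bundle $E$ with the prescribed numerical data is $m$-regular, and set $N=h^0(E(m))$. Let $Q$ be the open subscheme of a Quot scheme parameterizing locally free quotients $\sO_C(-m)^{\oplus N}\surj E$ with the correct Hilbert polynomial and $\det E\cong L$. Enrich $Q$ to a parameter scheme $R$ by pulling back, for each $x\in I$, the relative flag bundle of $\vec n(x)$-flags in the fiber of the universal bundle at $x$; the action of $SL(N)$ on $Q$ lifts, and $\sU^L_{C,\omega}\cong R^{ss}/\!\!/SL(N)$ with respect to a suitable $\omega$-weighted linearization.

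The heart of the argument is to show that $R^{ss}$ is of globally $F$-regular type. Quot schemes are not globally $F$-regular in general, so I would not argue directly about $R^{ss}$. Instead, following the philosophy of Faltings and Bhosle, I would use the generalized parabolic sheaf viewpoint (the other moduli space mentioned in the abstract) to construct a variety $\wt R$ together with a surjective morphism $\pi\colon\wt R\to R^{ss}$ satisfying $\pi_*\sO_{\wt R}=\sO_{R^{ss}}$, and built as a tower of affine bundles, flag bundles, and Grassmannian bundles over a component of the Picard scheme. Each such layer preserves globally $F$-regular type; the fixed-determinant constraint is imposed by restriction to a fiber of a smooth morphism to an abelian variety, which also preserves the property on a dense set of primes.

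Finally, I would invoke the principle that good GIT quotients by reductive groups preserve globally $F$-regularity: splittings of $\sO\inj F^e_*\sO(D)$ for a $G$-linearized effective divisor $D$ average to splittings on the quotient. For the dense set of primes $p$ where $R^{ss}_p$ is globally $F$-regular, this gives globally $F$-regularity of $(\sU^L_{C,\omega})_p$, hence globally $F$-regular type of $\sU^L_{C,\omega}$. The main obstacle is the middle step: constructing $\wt R$ and verifying that it is indeed built from globally $F$-regular blocks in a way compatible with the parabolic data $\omega$ and the fixed determinant $L$. The generalized parabolic sheaf framework is the essential bridge here, and aligning it cleanly with both the fixed-determinant condition and the GIT linearization — without destroying the compatibility of splittings — is the step I expect to require the most care.
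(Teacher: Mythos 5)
Your proposal diverges from the paper's argument at the two places where the real difficulties lie, and both of your substitutes have genuine gaps. First, your final transfer step --- ``splittings average to splittings on the quotient'' --- is not available: in characteristic $p$ the group $SL(N)$ is reductive but not linearly reductive, so there is no Reynolds operator and no a priori splitting of $\sO_{Y}\to\pi_*\sO_{R^{ss}}$ for the quotient map $\pi$. This is exactly why the paper does \emph{not} try to prove that the parameter space $R^{ss}$ (or a cover $\wt R$ of it) is globally $F$-regular and then descend. Instead it compares two GIT \emph{quotients}: it enlarges the parabolic data by auxiliary points $I'$ and chooses the special weights $\omega_c$ (with $k=2r$ and $\bar a_{i+1}(x)-\bar a_i(x)=n_i(x)+n_{i+1}(x)$) so that the descended theta bundle on $Z=\sU^L_{C,\omega_c}$ is the anticanonical bundle; then $Z$ is a normal Fano variety with rational singularities (Propositions \ref{prop3.5} and \ref{prop3.6}, via Knop's descent criterion and codimension estimates), hence of globally $F$-regular type by Smith's theorem (Proposition \ref{prop2.6}). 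An open set $X\subset Z$ maps to $Y=\sU^L_{C,\omega}$ with $f_*\sO_X=\sO_Y$ on the nose (using ${\rm Codim}(\sR'^L_F\setminus W)\ge 2$), so the identity itself is the splitting and Lemma \ref{lem2.3} applies fiberwise. Your alternative source of global $F$-regularity --- exhibiting $\wt R$ as a tower of affine, flag and Grassmannian bundles --- is, as you yourself note, unverified; the Quot scheme underlying $\sR$ has no such global bundle description, and the known Hecke/matrix-divisor descriptions are only birational or stratified.

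Second, and independently, your proposal never addresses $p$-compatibility. Globally $F$-regular \emph{type} is a statement about mod-$p$ reductions of an integral model, so even granting a morphism $f:X\to Y$ with $f_*\sO_X=\sO_Y$ in characteristic zero and $X$ of globally $F$-regular type, one must show that this equality survives reduction mod $p$ for a dense set of primes. Since $X$ and $Y$ are open subsets of GIT quotients, this requires knowing that forming the GIT quotient over a finitely generated $\Z$-algebra commutes with passage to closed fibers for a dense set of primes; that is Lemma \ref{lem2.9} (proved via Seshadri's results and generic normality plus Zariski's main theorem), and packaging it with the codimension and saturation conditions is Proposition \ref{prop2.10}. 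The authors single this out as the step for which they could find no reference; omitting it leaves the reduction from characteristic zero to characteristic $p$ unjustified in your plan.
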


When the projective curve $C$ has exactly one node (irreducible, or reducible), the moduli space $\sU_{C,\,\omega}$ is not normal and its normalization
is a moduli space $\sP_{\omega}$ of semistable generalized parabolic sheaves (GPS) on $\wt C$ (where $\wt{C}$ is normalization of $C$). There exist a similar determinant morphism
${\rm det}:\sP_{\omega}\to J^d_{\wt{C}}$. For any $L\in J^d_{\wt{C}}$, the fiber
$$\sP^L_{\omega}:={\rm det}^{-1}(L)$$
is called a moduli space of semistable generalized parabolic sheaves (GPS) with a fixed determinant $L$ on $\wt C$. Then the second main result
in this article is
\begin{thm}[See Theorem \ref{thm3.14} and Theorem \ref{thm3.22}]\label{thm1.2} The moduli spaces $\sP_{\omega}^L$ are of globally $F$-regular type.
\end{thm}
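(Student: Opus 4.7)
The plan is to adapt the strategy behind Theorem \ref{thm1.1} to nodal curves by incorporating the additional gluing datum at the preimages of the nodes. Both the irreducible nodal case and the reducible nodal case should follow the same three-step template: (i) realize $\sP_{\omega}^{L}$ as a good GIT quotient of a parameter scheme $R^{ss}_{L}$; (ii) show $R^{ss}_{L}$ is of globally $F$-regular type; and (iii) descend this property through the quotient.

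For step (i), I would follow the Bhosle-style construction of the moduli of GPS. Fix a sufficiently large twist $m$ and parametrize GPS on $\wt{C}$ whose underlying sheaf is $m$-regular, together with a quotient $E_{x_{1}}\oplus E_{x_{2}}\twoheadrightarrow Q$ at the preimages of each node and flag data at the parabolic points. These data form a locally closed subscheme of a product $\mathrm{Quot}\times\prod\mathrm{Flag}\times\mathrm{Gr}$ on which $SL(V)$ acts, and the fixed-determinant semistable locus $R^{ss}_{L}$ has $\sP_{\omega}^{L}$ as its good GIT quotient.

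For step (ii), I would exhibit $R^{ss}_{L}$ as an open subscheme of a globally $F$-regular type ambient variety. The flag factors and the node-gluing Grassmannian factor are globally $F$-regular by the results of Lauritzen--Raben-Pedersen--Thomsen on Schubert varieties. The remaining content is that the fixed-determinant Quot factor is of globally $F$-regular type, which is supplied by the parameter-scheme machinery developed in the proof of Theorem \ref{thm1.1}, applied now to the normalization $\wt{C}$ rather than to a smooth curve. Step (iii) then invokes the descent principle, in the spirit of Smith's original theorem and its refinements, that a good quotient of a globally $F$-regular type variety by a reductive group remains globally $F$-regular type for a dense set of primes $p$.

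The main obstacle is step (ii) in the reducible nodal case: the Jacobian $J^{d}_{\wt{C}}$ then factors as a product of Jacobians of the irreducible components, so the fixed-determinant locus is cut out by a nontrivial linear constraint on these factors, and one must verify that this constraint is compatible with the Schubert-theoretic Frobenius splitting of the ambient variety. Producing a splitting on the gluing Grassmannian that is simultaneously compatible with the diagonal-type subvariety encoding the node identification and with the fixed-determinant condition, and then checking that this splitting extends $F^{e}_{*}\sO(D)$-compatibly for an arbitrary effective divisor $D$, is where I expect the technical heart of the argument to reside; a secondary point requiring care is ensuring all constructions descend to $\mathrm{Spec}\,\Z[1/N]$ so that reduction mod $p$ is valid on a dense set of primes.
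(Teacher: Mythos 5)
Your step (iii) is the fatal gap. There is no descent principle of the form ``a good quotient of a globally $F$-regular type variety by a reductive group is again of globally $F$-regular type.'' The descent result actually available (Lemma \ref{lem2.3}, i.e.\ Schwede--Smith) requires a \emph{splitting} of $\sO_Y\to \pi_*\sO_{X^{ss}}$ on the modulo $p$ reductions; for the quotient map by ${\rm SL}(V)$ (or $G$) this splitting would come from a Reynolds operator, which does not exist in characteristic $p$ because these groups are only geometrically, not linearly, reductive. If such a descent held, the Mehta--Ramadas conjecture quoted in the introduction would be immediate; the entire architecture of the paper exists precisely to route around this point. Your step (ii) is also unsupported: the proof of Theorem \ref{thm1.1} never shows that the parameter scheme $\sR^L_F$ (or its GPS analogue) is of globally $F$-regular type --- it shows that a \emph{different GIT quotient}, taken with respect to the canonical weight $\omega_c$ after enlarging the set of parabolic points, is a normal Fano variety with rational singularities. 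Moreover, global $F$-regularity passes to open subschemes but not to closed ones, so neither the fixed-determinant condition nor the embedding of the Quot scheme into a product with flag and Grassmannian factors gives you what you claim; and for a projective globally $F$-regular type variety one has $H^i(X,\sO_X)=0$ for $i>0$, which already rules out this kind of naive inheritance in general.

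The paper's actual argument keeps your step (i) but replaces (ii)--(iii) by Proposition \ref{prop2.10}: one introduces auxiliary parabolic points $I'$ and the weight $\omega_c$ so that the auxiliary quotient $Z=\sP^L_{\omega_c}$ is Fano with rational singularities (Propositions \ref{prop3.13} and \ref{prop3.20}, via the anticanonical bundle formula and Knop's descent of $\omega^{-1}$), hence of globally $F$-regular type by Smith's Proposition \ref{prop2.6}; one then constructs an open $X\subset Z$ and a morphism $f\colon X\to Y=\sP^L_\omega$ with $f_*\sO_X=\sO_Y$, using the codimension estimates of Propositions \ref{prop3.12} and \ref{prop3.19} and the saturation $\hat X=\varphi^{-1}\varphi(\hat X)$ (checked on associated graded objects of Jordan--H\"older filtrations); finally $p$-compatibility of $f$ is established via Lemma \ref{lem2.9}, which shows GIT quotients over $\Z$ commute with reduction modulo $p$ for a dense set of $p$. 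Note also that in the reducible nodal case the codimension bound fails on all of $(\sH')^L$ and one must restrict to the locally free locus and invoke Lemma \ref{lem3.21} to recover $f_*\sO_X=\sO_Y$ --- none of which is visible from your outline. Your closing paragraph locates the difficulty in constructing explicit Frobenius splittings compatible with the gluing Grassmannian and the determinant constraint, but no explicit splittings are ever constructed in this argument; that concern is an artifact of the unworkable steps (ii)--(iii).
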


To describe the idea of proof, recall that the moduli space $\sU^L_{C,\,\omega}$ is
a GIT quotient $(\sR^{ss}_{\omega})^L//{\rm SL}(V)$, where $(\sR^{ss}_{\omega})^L\subset\sR^L_F$ is a open set of a quasi-projective variety $\sR^L_F$ ( i.e. the
set of GIT semistable points respect to a polarization $\Theta_{\sR,\,\omega}$ determined by $\omega$). Then our idea is to find
a flag bundle ${\sR'}^L_F\xrightarrow{\hat f}\sR^L_F$ over $\sR^L_F$ and a data $\omega'$ such that $\sU^L_{C,\,\omega'}=({\sR'}_{\omega'}^{ss})^L//{\rm SL}(V)$
is a Fano variety with an open subvariety $X\subset\sU^L_{C,\,\omega'}$ and a morphism $X\xrightarrow{f}\sU^L_{C,\,\omega}$ satisfying
$f_*\sO_X=\sO_{\sU^L_{C,\,\omega}}.$
Since Fano varieties are globally $F$-regular type by Proposition 6.3 of \cite{Sm}, so are $X$ and $\sU^L_{C,\,\omega}$ if the equality $f_*\sO_X=\sO_{\sU^L_{C,\,\omega}}$
commutes with modulo $p$ reductions for a dense set of $p$. To prove that $f_*\sO_X=\sO_{\sU^L_{C,\,\omega}}$
commutes with modulo $p$ reductions for a dense set of $p$, one has to show in particular that a GIT quotient over $\mathbb{Z}$ must commute
with modulo $p$ reductions for a dense set of $p$, which is Lemma \ref{lem2.9} (we thought at first that Lemma \ref{lem2.9} must be well-known to experts,
but we are not able to find any reference). We formulate our idea in Proposition \ref{prop2.10} in a general setting, the proof of Theorem \ref{thm1.1} and Theorem \ref{thm1.2} becomes to check
conditions in Proposition \ref{prop2.10}.

We describe briefly content of the article. In Section 2, we collect notions and properties of globally $F$-regular type varieties, in particular,
we formulate and prove Proposition \ref{prop2.10}, which is our technical tool to show globally $F$-regular type of GIT quotients. In Section 3, we recall some facts about moduli spaces of parabolic bundles and prove Theorem \ref{thm1.1}. Finally, we prove Theorem \ref{thm1.2} in Section 4.

{\it Acknowledegements:} Xiaotao Sun would like to thank C. S. Seshadri for a number of emails of discussions about Lemma \ref{lem2.9}, and
he also would like to thank K. Schwede and K. E. Smith for discussions (by emails) of globally $F$-regular type varieties.

\section{Globally F-regular varieties}

We collect firstly some notions and facts of globally F-regular varieties over a perfect field $k$ of positive characteristic and recall the definition of
globally F-regular type of varieties over a field of characteristic zero. Our main references here are \cite{Br}, \cite{Sc} and \cite{Sm}.

Let $X$ be a variety over a perfect field $k$ of $char (k)=p>0$, $$F:X\to X$$
be the Frobenius map and $F^e:X\to X$
be the e-th iterate of Frobenius map. When $X$ is normal, for any (weil) divisor $D\in Div(X)$,
$$\sO_X(D)(V)=\{\,f\in K(X)\,|\, div_V(f)+D|_V\ge 0\,\},\quad \forall\,\,V\subset X$$
is a reflexive subsheaf of constant sheaf $K=K(X)$. In fact, we have
$$\sO_X(D)=j_*\sO_{X^{sm.}}(D)$$
where $j:X^{sm.}\hookrightarrow X$ is the open set of smooth points, and $\sO_X(D)$ is an invertible sheaf
if and only if $D$ is a Cartier divisor.

\begin{defn}\label{defn2.1} A normal variety $X$ over a perfect field is called
\emph{stably Frobenius $D$-split} if $\sO_X\to F^e_*\sO_X(D)$ is split for some $e>0$.
$X$ is called \emph{globally F-regular} if $X$ is stably Frobenius $D$-split
for any effective divisor $D$.
\end{defn}

The advantage of this definition is that any open set $U\subset X$ of a globally F-regular variety $X$ is globally F-regular. Its disadvantage
is the requirement of normality of $X$. When $X$ is not normal, one possible remedy of Definition \ref{defn2.1} is to require that $D$ is a Cartier divisor. Then it loses the
advantage that any open set $U\subset X$ is globally F-regular since a Cartier divisor on $U$
may not be extended to a Cartier divisor on $X$. But, when $X$ is a projective variety and is stably Frobenius $D$-split for any effective Cartier $D$, $X$ must be
normal and Cohen-Macaulay according to K. E. Smith (Theorem 3.10 and Theorem 4.1 of \cite{Sm}).

\begin{prop}[Theorem 3.10 of \cite{Sm}]\label{prop2.2} Let $X$ be a projective variety over a perfect field. Then the following statements are equivalent.
\begin{itemize} \item [(1)] $X$ is normal and is stably Frobenius $D$-split for any effective $D$;
\item [(2)] $X$ is stably Frobenius $D$-split for any effective Cartier $D$;
\item [(3)] For any ample line bundle $\sL$, the section ring of $X$
$$R(X,\sL)=\bigoplus_{n=0}^{\infty}H^0(X,\sL^n)$$
is strongly F-regular.
\end{itemize}
\end{prop}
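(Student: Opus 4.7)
The plan is to establish the cycle $(1) \Rightarrow (2) \Rightarrow (3) \Rightarrow (1)$. The first implication is immediate: under $(1)$ the variety $X$ is normal, so every effective Cartier divisor is an effective Weil divisor and Definition \ref{defn2.1} applies directly.

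For $(3) \Rightarrow (1)$, set $R = R(X,\sL)$. Strong F-regularity of $R$ is preserved under localization and implies $R$ is normal, so $X = \text{Proj}\, R$ is normal as well. Given an effective Weil divisor $D$ on $X$, I would choose $N \gg 0$ so that $H^0(X, \sL^N(-D))$ contains a nonzero section, yielding a homogeneous element $\tilde s \in R_N$. Strong F-regularity supplies a splitting of $R \xrightarrow{\,\cdot \tilde s\,} F^e_* R$ for some $e > 0$; extracting the degree-zero graded component of this splitting and invoking the Proj dictionary converts it into an $\sO_X$-linear splitting of $\sO_X \to F^e_*\sO_X(D)$. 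One only needs to track the $\sL$-twists through the correspondence between graded $R$-modules and quasicoherent sheaves on $X$.

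For the substantive direction $(2) \Rightarrow (3)$, two steps are required: normality of $X$, and promotion of sheaf-level splittings to algebra-level splittings of $R$. For normality, applying $(2)$ with $D = 0$ already shows $X$ is $F$-split and hence reduced; then, using ampleness of $\sL$ to produce sufficiently many effective Cartier divisors supported near the non-normal locus, the splittings in $(2)$ upgrade reducedness to Serre's conditions $R_1$ and $S_2$ on $R$, which suffice for normality. Strong F-regularity of $R$ is then checked by: for every nonzero homogeneous $c \in R_n = H^0(X, \sL^n)$, view $c$ as defining an effective Cartier divisor $D_c \subset X$; hypothesis $(2)$ supplies an $\sO_X$-linear splitting $\varphi: F^e_*\sO_X(D_c) \to \sO_X$ for some $e > 0$; and the family $\{\varphi \otimes \mathrm{id}_{\sL^m}\}_{m \geq 0}$ assembles (via the projection formula) into a graded $R$-linear splitting of $R \xrightarrow{\,\cdot c\,} F^e_* R$. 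The reduction from general nonzero elements of $R$ to nonzero homogeneous ones then rests on the standard fact that strong F-regularity can be tested against a single test element, which may be chosen homogeneous.

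The step I expect to be the main obstacle is extracting normality of $X$ from hypothesis $(2)$, because $(2)$ only grants splittings along Cartier divisors while the non-normal locus need not be the support of any single Cartier divisor; one must cover it by finitely many very ample Cartier divisors and combine the resulting splittings, leveraging the ampleness of $\sL$ in an essential way. The algebra-to-sheaf and sheaf-to-algebra translations are by contrast routine, provided one is careful with twists by $\sL$, with the projection formula, and with the shifts introduced by Frobenius on the graded pieces of $F^e_*R$.
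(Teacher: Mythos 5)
Your cycle $(1)\Rightarrow(2)\Rightarrow(3)\Rightarrow(1)$ is the same as the paper's, and the substantive content of your $(2)\Rightarrow(3)$ and $(3)\Rightarrow(1)$ is essentially the argument of Theorem 3.10 of \cite{Sm}, which the paper simply cites for $(2)\Rightarrow(3)$ and adapts for $(3)\Rightarrow(1)$. Your $(3)\Rightarrow(1)$ does differ mildly from the paper's: you obtain normality of $X$ from normality of the strongly $F$-regular ring $R(X,\sL)$ and then split along a homogeneous section of $\sL^N(-D)$ directly, whereas the paper invokes Theorem 4.1 of \cite{Sm} for normality and Cohen--Macaulayness and then restricts $D$ to the smooth locus $X^{sm.}$, where it becomes Cartier, before extending the splitting back via $\sO_X(D)=j_*\sO_{X^{sm.}}(D)$. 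Both routes work.

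The one place your proposal goes wrong is precisely the step you single out as ``the main obstacle'': establishing normality of $X$ as a \emph{prerequisite} for $(2)\Rightarrow(3)$. No such prerequisite exists. Statement $(3)$ concerns the ring $R=R(X,\sL)$, and strong $F$-regularity neither presupposes normality nor needs it in its verification: since $X$ is an integral projective variety, $R$ is a domain, every nonzero homogeneous $c\in R_n$ is a nonzerodivisor whose zero scheme is an honest effective Cartier divisor, and your projection-formula assembly of the splittings $\varphi\otimes{\rm id}_{\sL^m}$ into a graded splitting of $R\to F^e_*R$ goes through verbatim; the reduction to homogeneous test elements is likewise unaffected. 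Normality of $R$ (hence of $X$) is then a standard \emph{consequence} of strong $F$-regularity --- this is exactly how Theorem 4.1 of \cite{Sm} obtains it --- not an input. By contrast, the mechanism you sketch for proving normality up front, namely choosing effective Cartier divisors ``supported near the non-normal locus'' to force $R_1$ and $S_2$, is not substantiated and I do not see how to make it work: splitting along members of $|\sL^n|$ does not visibly control the local rings at non-normal points, and the non-normal locus need not be cut out by any such divisor in a useful way. Delete that step and your $(2)\Rightarrow(3)$ is correct; keep it and you have replaced a citation with an unproven (and unnecessary) claim.
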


\begin{proof} It is clear that $(1)\Rightarrow (2)$, and $(2)\Rightarrow (3)$ is proved in Theorem 3.10 of \cite{Sm}. That $(3)\Rightarrow (1)$ is a modification of the proof
in \cite{Sm}. By Theorem 4.1 of \cite{Sm}, $X$ is normal and Cohen-Macaulay.  Let $X^{sm.}\subset X$ be the open set of smooth points, then $R(X,\sL)=R(X^{sm.},\sL)$ and, for any effective $D\in Div(X)$,
$D\cap X^{sm.}$ is an effective Cartier  divisor on $X^{sm.}$. Then the proof of $(1)\Rightarrow (3)$ in Theorem 3.10 of \cite{Sm} implies
that $X^{sm.}$ is stably Frobenius $D\cap X^{sm.}$-split, which implies that $X$ is stably Frobenius $D$-split.
\end{proof}

A variety $X$ is called \emph{F-split} if $\sO_X\to F_*\sO_X$ is split. In particular,
\emph{globally F-regular} varieties are \emph{F-split}. Let $X\xrightarrow{f}Y$ be a morphism such that $f_*\sO_X=\sO_Y$, then any splitting map
$F_*\sO_X\xrightarrow{\psi}\sO_X$ of $\sO_X\to F_*\sO_X$ induces a splitting map $F_*\sO_Y=F_*f_*\sO_X=f_*F_*\sO_X\xrightarrow{f_*\psi} f_*\sO_X=\sO_Y$. There is a generalization
of above useful observation.

\begin{lem}[Corollary 6.4 of \cite{Sc}]\label{lem2.3} Let $f:X\to Y$ be a morphism of varieties over a perfect field $k$ of $char(k)=p>0$.
If the natural map $\sO_Y\xrightarrow{i} f_*\sO_X$ splits and $X$ is globally F-regular, then $Y$ is stably Frobenius $D$-split for any effective Cartier divisor $D$, and
it is globally F-regular when $Y$ is normal.
\end{lem}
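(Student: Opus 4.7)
The plan is to construct, for an arbitrary effective Cartier divisor $D$ on $Y$, a splitting of the natural map $\sO_Y \to F^e_*\sO_Y(D)$ out of the two pieces of data we are given: the splitting $s: f_*\sO_X \to \sO_Y$ of $i$, and a splitting of some Frobenius map on $X$ produced by global F-regularity. The natural candidate is to pull $D$ back.

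Since $D$ is Cartier, the pullback $E := f^*D$ is a well-defined effective Cartier divisor on $X$. Because $X$ is globally F-regular there exist $e>0$ and a splitting $\phi: F^e_*\sO_X(E)\to \sO_X$ of the natural map $\sO_X\to F^e_*\sO_X(E)$. Applying $f_*$ and using that the absolute Frobenius commutes with $f$ (so that $f_*F^e_* = F^e_*f_*$), we obtain
$$F^e_*f_*\sO_X(E)\;=\;f_*F^e_*\sO_X(E)\;\xrightarrow{f_*\phi}\;f_*\sO_X\;\xrightarrow{s}\;\sO_Y.$$
On the other hand, the adjunction unit for the line bundle $\sO_Y(D)$ gives a natural map $\sO_Y(D)\to f_*f^*\sO_Y(D)=f_*\sO_X(E)$, whence $F^e_*\sO_Y(D)\to F^e_*f_*\sO_X(E)$. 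Composing this with the previous map yields a map $\psi: F^e_*\sO_Y(D)\to \sO_Y$, which is our proposed splitting.

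The key verification is that $\psi$ does split $\sO_Y\to F^e_*\sO_Y(D)$. By naturality of the construction $\sO_Z\mapsto F^e_*\sO_Z(D)$ applied to $f$, the composition $\sO_Y\to F^e_*\sO_Y(D)\to F^e_*f_*\sO_X(E)$ agrees with $\sO_Y\xrightarrow{i}f_*\sO_X\to f_*F^e_*\sO_X(E)$, i.e., first apply $i$ and then push forward the natural inclusion on $X$. Then $f_*\phi$ postcomposed with the pushforward of $\sO_X\to F^e_*\sO_X(E)$ is the identity on $f_*\sO_X$ (since $\phi$ is a splitting on $X$), so the whole composition reduces to $s\circ i = \mathrm{id}_{\sO_Y}$, as required. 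This shows $Y$ is stably Frobenius $D$-split for every effective Cartier $D$.

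For the last assertion, when $Y$ is (projective and) normal, Proposition \ref{prop2.2} gives the equivalence between being stably Frobenius $D$-split for all effective Cartier $D$ and being globally F-regular, so the conclusion follows at once. I expect the main (but mild) obstacle to be the diagram chase in the previous paragraph, specifically checking that the natural map $\sO_Y(D)\to f_*\sO_X(E)$ is compatible with the Frobenius-twisted natural maps on $Y$ and on $X$; this is formal once one uses that $f$ is a morphism of $\F_p$-schemes so that the Frobenius squares commute.
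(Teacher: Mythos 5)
Your construction of the splitting is essentially identical to the paper's: pull the section defining $D$ back to $X$, use global F-regularity of $X$ to split $\sO_X\to F^e_*\sO_X(f^*D)$, push forward along $f$ (using that $f$ commutes with Frobenius), and postcompose with the given splitting of $\sO_Y\to f_*\sO_X$; the paper phrases your adjunction-unit map as $\sO_Y(D)\xrightarrow{1\otimes i}\sO_Y(D)\otimes f_*\sO_X=f_*\sO_X(H)$ via the projection formula, and the diagram chase verifying $s\circ i=\mathrm{id}$ is the same. So the first assertion is proved correctly and by the same route.

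The divergence is in the last assertion. You deduce global F-regularity of normal $Y$ from Proposition \ref{prop2.2}, which is stated only for \emph{projective} varieties, and the lemma imposes no projectivity hypothesis on $Y$ --- your own parenthetical ``(projective and) normal'' flags exactly the hypothesis you are smuggling in. The paper avoids this: for normal $Y$ it restricts to the smooth locus $Y_0\subset Y$, observes that $Y$ is globally F-regular if and only if $Y_0$ is stably Frobenius $D$-split for every effective Cartier divisor on $Y_0$ (every effective Weil divisor on $Y$ restricts to a Cartier divisor there, and the complement has codimension $\ge 2$ so nothing is lost for the reflexive sheaves $\sO_Y(D)$), and then applies the already-proven Cartier case to $f^{-1}(Y_0)\to Y_0$. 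You should replace the appeal to Proposition \ref{prop2.2} by this restriction-to-the-smooth-locus step; with that change the argument is complete and matches the paper.
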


\begin{proof} For any Cartier divisor $D\in Div(Y)$ defined by a section $s\in\Gamma(Y,\sO_Y(D))$, let $H=f^*D$ and $F_*^e\sO_X(H)\xrightarrow{h}\sO_X$ be a splitting of
$\sO_X\to F^e_*\sO_X\xrightarrow{F^e_*f^*(s)} F_*^e\sO_X(H)$, and
 $f_*\sO_X\xrightarrow{j}\sO_Y$ be a splitting of $\sO_Y\xrightarrow{i} f_*\sO_X$.
Then $\sO_Y(D)\xrightarrow{1\otimes i}\sO_Y(D)\otimes f_*\sO_X=f_*\sO_X(H)$ induces
$$F^e_*\sO_Y(D)\xrightarrow{F^e_*1\otimes i}F^e_*f_*\sO_X(H)=f_*F^e_*\sO_X(H)\xrightarrow{f_*h}f_*\sO_X\xrightarrow{j}\sO_Y$$
is a splitting of $\sO_Y\to F^e_*\sO_Y\xrightarrow{F^e_*s} F^e_*\sO_Y(D)$. When $Y$ is normal, let $Y_0\subset Y$ be the open set of smooth points, $Y$ is globally F-regular if and only if $Y_0$ is stably
Frobenius $D$-split for any effective Cartier divisor $D\in Div(Y_0)$, which is true by applying above argument to $f^{-1}(Y_0)\xrightarrow{f} Y_0$.
\end{proof}

For any scheme $X$ of finite type over a field $K$ of
characteristic zero, there is a
finitely generated $\mathbb{Z}$-algebra $A\subset K$ and an $A$-flat
scheme $$X_A\to S={\rm Spec}(A)$$ such that $X_K=X_A\times_S{\rm
Spec}(K)\cong X$. $X_A\to S={\rm Spec}(A)$ is called an integral model of $X/K$, and a closed fiber
$X_s=X_A\times_S{\rm Spec}(\overline{k(s)})$ is
called "\textbf{modulo $p$ reduction of $X$}" where $p={\rm
char}(k(s))>0$.

\begin{defn}\label{defn2.4} A variety $X$ over a field of characteristic zero is said to be of \emph{globally F-regular type} (resp.\emph{ F-split type}) if its "\textbf{modulo $p$ reduction of $X$}"  are globally F-regular (resp. \emph{F-split })
for a dense set of $p$.
\end{defn}

Projective varieties of \emph{globally F-regular type} have many nice properties and a good vanishing theorem of cohomology.

\begin{thm}[Corollary 5.3 and Corollary 5.5 of \cite{Sm}]\label{thm2.5} Let $X$ be a projective variety over a field of characteristic zero. If $X$ is of globally F-regular type,
then we have \begin{itemize} \item [(1)] $X$ is normal, Cohen-Macaulay with rational singularities. If $X$ is $\mathbb{Q}$-Gorenstein, then $X$ has log terminal singularities.
\item [(2)] For any nef line bundle $\sL$ on $X$, we have $H^i(X,\sL)=0$ when $i>0$. In particular, $H^i(X,\sO_X)=0$ whenever $i>0$.
\end{itemize}
\end{thm}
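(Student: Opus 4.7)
The plan is to establish (2) directly from the $F$-splitting in characteristic $p$ via a spreading-out argument, and to deduce (1) by combining Proposition \ref{prop2.2} with standard correspondences between $F$-singularities and singularities of the minimal model program. The two parts are independent in nature, with (2) being the more elementary.

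For (2), choose an integral model $\sX\to S=\Spec A$ of $X$ together with extensions $\sL_A$ of $\sL$ and a relatively ample Cartier divisor $D$ on $\sX/S$. Shrinking $S$, we may assume that $\sL_s$ is nef on every closed fibre $X_s$. By Fujita vanishing applied on each such fibre to the ample line bundle $\sO_{X_s}(D)$, there is an integer $m_0$, independent of $e$, such that
\[
H^i\bigl(X_s,\sO_{X_s}(m_0 D)\otimes \sL_s^{p^e}\bigr)=0\quad\text{for all }i>0\text{ and all }e\ge 0.
\]
For $s$ in the dense locus where $X_s$ is globally $F$-regular, choose $e$ so that $\sO_{X_s}\hookrightarrow F^e_*\sO_{X_s}(m_0 D)$ splits. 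Tensoring with $\sL_s$ and applying the projection formula yields a split injection
\[
\sL_s\hookrightarrow F^e_*\bigl(F^{e*}\sL_s\otimes \sO_{X_s}(m_0 D)\bigr)=F^e_*\bigl(\sL_s^{p^e}\otimes \sO_{X_s}(m_0 D)\bigr),
\]
so $H^i(X_s,\sL_s)$ embeds into the vanishing group above. Hence $h^i(X_s,\sL_s)=0$ for $s$ in a dense subset of $S$, and upper semicontinuity of $h^i$ in the flat projective family $\sX\to S$ forces $H^i(X,\sL)=H^i(\sX,\sL_A)\otimes_A K=0$.

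For (1), Proposition \ref{prop2.2} implies that for each $s$ in the dense globally $F$-regular locus and any ample $\sA_s$ on $X_s$, the section ring $R(X_s,\sA_s)$ is strongly $F$-regular. Strong $F$-regularity is known to imply normality and Cohen-Macaulayness (Hochster-Huneke), and both properties descend from a dense set of closed fibres to $X$ by flatness. Because strongly $F$-regular implies $F$-rational, $X$ is of $F$-rational type, and the theorem of Mehta-Srinivas, Hara, and Smith identifies $F$-rational type in characteristic zero with rational singularities. When $X$ is in addition $\mathbb{Q}$-Gorenstein, the Hara-Watanabe correspondence characterises strongly $F$-regular type as Kawamata log terminal, giving the final assertion.

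The main obstacle is the last step: one must verify that the canonical divisor, its index, and the data entering Proposition \ref{prop2.2} interact compatibly with reduction modulo $p$ on a dense set of primes, which is the technical content of the Hara-Watanabe theorem. The implications from strong $F$-regularity to Cohen-Macaulayness and then from $F$-rational type to rational singularities are comparatively mechanical, and the cohomological vanishing in (2) is made uniform in the Frobenius exponent by the Fujita vanishing input, which is the one genuinely nontrivial ingredient in that half of the proof.
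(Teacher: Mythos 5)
This theorem is quoted in the paper without proof, as Corollaries 5.3 and 5.5 of \cite{Sm}, so there is no internal argument to compare against; your reconstruction follows Smith's original proofs essentially verbatim (Fujita vanishing combined with the split injection $\sL_s\hookrightarrow F^e_*\bigl(\sL_s^{p^e}\otimes\sO_{X_s}(m_0D)\bigr)$ for part (2), and strong $F$-regularity of section rings via Proposition \ref{prop2.2} together with the Hochster--Huneke, Smith/Hara/Mehta--Srinivas, and Hara--Watanabe correspondences for part (1)), and it is correct. The only points worth making explicit are that $D$ should be chosen effective (replace it by a suitable multiple) so that the stable Frobenius $m_0D$-splitting of Definition \ref{defn2.1} applies, and that the passage of normality and Cohen--Macaulayness from a dense set of closed fibres to the generic fibre rests on constructibility of these loci (cf.\ Th\'eor\`eme (12.2.4) of \cite{Gr}) rather than on flatness alone.
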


A normal projective variety $X$ is called a \emph{Fano variety} if
$$\omega_X^{-1}=\sH om_{\sO_X}(\omega_X,\sO_X)$$ is an ample line bundle. One of important examples of \emph{globally F-regular type} varieties is

\begin{prop}\label{prop2.6} (\cite[Proposition~6.3]{Sm})  A Fano variety (over a field of characteristic zero) with at most rational singularities is of globally F-regular type.
\end{prop}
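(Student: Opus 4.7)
The plan is to reduce the statement to a question about the section ring at a single ample line bundle and then invoke the correspondence between Kawamata log terminal (KLT) singularities in characteristic zero and strongly $F$-regular singularities in characteristic $p$, which holds for a dense set of $p$.

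First, I would choose an integer $m>0$ so that $-mK_X$ is Cartier (this is possible because a Fano variety is $\mathbb{Q}$-Gorenstein: $-K_X$ is $\mathbb{Q}$-Cartier and ample), and set $\sL=\sO_X(-mK_X)$, which is an ample line bundle. By Proposition \ref{prop2.2}, to show that a chosen modulo $p$ reduction $X_p$ is globally $F$-regular, it suffices to show that the section ring
\[
R(X,\sL)=\bigoplus_{n\ge 0}H^0(X,\sL^n)
\]
has strongly $F$-regular reduction modulo $p$. Picking an integral model over a finitely generated $\Z$-algebra $A\subset K$ and noting that formation of the section ring of an ample line bundle commutes with base change for dense $p$ (by cohomology and base change together with Kodaira-type vanishing in the relative setting), one is reduced to showing that $R(X,\sL)$ itself is of \emph{strongly $F$-regular type} as a local ring at the irrelevant maximal ideal.

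Next, I would show that the affine cone $\Spec R(X,\sL)$ has KLT singularities at its vertex. Because $\sL=\sO_X(-mK_X)$ is (a multiple of) the anticanonical bundle, the cone singularity is quasi-Gorenstein with canonical divisor pulled back from $X$ up to a multiple of the vertex, and the discrepancy computation over a resolution of the cone reduces to the discrepancies of a resolution of $X$: since $X$ has rational singularities and is $\mathbb{Q}$-Gorenstein, by Elkik--Kawamata this means $X$ has log terminal singularities, and the standard cone construction (using that $-K_X$ is ample, equivalently the polarization is a positive multiple of $-K_X$) transfers this to KLT at the vertex. (This is where I expect the main technical work: one has to verify the discrepancy inequality at the exceptional divisor of the blowup of the vertex as well as at all other exceptional divisors, using ampleness of $-K_X$ in an essential way.)

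Finally, invoke the theorem of Hara and Mehta--Srinivas--Smith--Watanabe: a KLT singularity over a field of characteristic zero has strongly $F$-regular type, i.e., its reduction modulo $p$ is strongly $F$-regular for a dense set of primes $p$. Applied to the vertex of the cone, this gives that $R(X,\sL)$ is strongly $F$-regular after reduction for dense $p$; combined with Proposition \ref{prop2.2}, this yields that $X_p$ is globally $F$-regular for dense $p$, which is the definition of globally $F$-regular type. The main obstacle, as indicated, is the discrepancy/cone calculation tying rational singularities on the Fano variety to KLT singularities on the anticanonical cone; everything else is either a direct citation or a standard spreading-out argument.
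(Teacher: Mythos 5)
The paper offers no argument of its own here: Proposition \ref{prop2.6} is quoted verbatim from Proposition 6.3 of \cite{Sm}, and your proposal is essentially a reconstruction of Smith's proof of that result --- pass to the section ring of the anticanonical polarization, show the vertex of the affine cone has the right class of singularities, invoke the Hara/Mehta--Srinivas/Smith--Watanabe correspondence to get strong $F$-regularity of the reduction for a dense set of $p$, and conclude via Theorem 3.10 of \cite{Sm} (Proposition \ref{prop2.2} above) together with a standard spreading-out argument for the section ring. So the route is the right one, and the reductions you describe (base change for $H^0(X,\sL^n)$ using vanishing, detecting strong $F$-regularity at the irrelevant maximal ideal) are all standard.

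One step is justified incorrectly as written: the claim that rational singularities plus $\mathbb{Q}$-Gorenstein implies log terminal is false in general. Elkik's theorem goes in the other direction (klt implies rational), and already among $\mathbb{Q}$-Gorenstein surface singularities there are rational examples that are not log terminal, so for a genuinely $\mathbb{Q}$-Fano variety this step would fail. What rescues the argument is the definition of Fano used in this paper: $\omega_X^{-1}$ is required to be an honest line bundle, so $X$ is Gorenstein of index one, and for Gorenstein varieties rational singularities are equivalent to canonical singularities (Elkik/Kempf); in particular you may take $m=1$, and the cone $\Spec R(X,\omega_X^{-1})$ is then Gorenstein with rational (hence canonical, hence klt) singularities by Watanabe's cone criterion. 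This is in fact exactly how Smith argues, replacing the klt/strongly-$F$-regular correspondence by the equivalent Gorenstein statement that rational singularities are of $F$-rational type and that $F$-rational Gorenstein rings are strongly $F$-regular. With that correction the proof is complete.
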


To show that a variety $Y$ is of \emph{globally F-regular type}, one possible approach is to construct an open set $X$ of a Fano variety (thus $X$ is of \emph{globally F-regular type}) with a morphism $f:X\to Y$ such that $f_*\sO_X=\sO_Y$. Then $Y$ is of \emph{globally F-regular type} if the following characteristic zero analogy of Lemma \ref{lem2.3} is true.

\begin{question}\label{question2.7} Let $X\xrightarrow{f} Y$ be a morphism of varieties over a field $K$ of ${\rm char}(K)=0$ such that $\sO_Y\to f_*\sO_X$ is split and $X$ is of
\emph{globally F-regular type}. Is $Y$ a variety of \emph{globally F-regular type} ?
\end{question}

Let $f_*\sO_X\xrightarrow{\beta}\sO_Y$ be a splitting of $\sO_Y\to f_*\sO_X$. Then Question \ref{question2.7} consists: (1)
Can we choose a model $f_A:X_A\to Y_A$ of $f:X\to Y$ such that the $\sO_Y$-homomorphism  $(f_{A_*}\sO_{X_A})\otimes_AK\xrightarrow{\beta}\sO_{Y_A}\otimes_AK$ can be extended to
$f_{A_*}\sO_{X_A}\xrightarrow{\beta_A}\sO_{Y_A}$ ? (2) Is there a dense set of closed point ${\rm Spec}(\overline{k(s)})\to S={\rm Spec}(A)$ such that
$i^*_sf_{A*}\sO_{X_A}=f_{s*}j^*_s\sO_{X_A}$ ?
where $Y_s=Y_A\times_A\overline{k(s)}\xrightarrow{i_s} Y_A$, $X_s=X_A\times_A\overline{k(s)}\xrightarrow{j_s} X_A$ and
$$\xymatrix{
  X_s \ar[d]_{f_s} \ar[r]^{j_s}
                &  X_A\ar[d]^{f_A}  \\
  Y_s \ar[r]^{i_s}
                & Y_A    .}$$

\begin{defn}\label{defn2.8} A morphism $X\xrightarrow{f} Y$ of varieties over a field $K$ of ${\rm char}(K)=0$ is called \emph{$p$-compatible} if there is an integral model
$X_A\xrightarrow{f_A} Y_A$ such that $i^*_sf_{A*}\sO_{X_A}=f_{s*}j^*_s\sO_{X_A}$ for $s\in {\rm Spec}(A)$.
\end{defn}

It is clear that (1) has an affirmative answer when either $f_*\sO_X$ is a coherent $\sO_Y$-module or the splitting map $\beta: f_*\sO_X\to\sO_Y$
is a homomorphism of $\sO_Y$-algebras. (2) has an affirmative answer for flat morphism $f:X\to Y$ with coherent $R\,^if_*\sO_X$ ($i\ge 0$). It is also clear that any affine morphism must be $p$-compatible. When $X$, $Y$ are open set of GIT quotients and $f: X\to Y$ is induced by a $G$-invariant $p$-compatible morphism $\hat f:\sR'\to \sR$ of parameter spaces, we will show that $f: X\to Y$ is $p$-compatible morphism in Proposition \ref{prop2.10}, which will need the following lemma.

\begin{lem}\label{lem2.9}
Let $X\to S={\rm Spec}(A)$ be a flat projective morphism, $A$ be an integral $\mathbb{Z}$-algebra of finite type and $G\to S$ be a $S$-flat reductive group scheme
with action on $X$ over $S$. If $L$ is a relative ample line bundle on $X$ linearizing the action of $G$, let
$$X^{ss}(L)\xrightarrow{\pi} X^{ss}(L)//G:=Y$$
be the GIT quotient over $S$. Assume that the geometrically generic fiber of $X^{ss}(L)\to S$ is an irreducible normal variety.
Then there is a dense open set $U\subset S$ such that for any $s\in U$
$$Y\times_S\overline{k(s)}\cong X_s^{ss}(L_s)//G_s$$
where $X_s=X\times_S\overline{k(s)}$ (resp. $G_s=G\times_S\overline{k(s)}$) is the geomerically closed fiber of $X\to S$ (resp. $G\to S$) at ${\rm Spec}(\overline{k(s))}\to S$.
\end{lem}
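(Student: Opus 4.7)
The plan is to exploit that $K := \mathrm{Frac}(A)$ has characteristic zero: over $K$ the reductive group $G_K$ is linearly reductive, so the GIT quotient over $K$ is manifestly well-behaved, and the task reduces to spreading the generic-fiber picture to a dense open of $S$.

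By construction $Y = \mathrm{Proj}_S(\sR^G)$ with $\sR^G := \bigoplus_{n\ge 0}(\pi_*L^n)^G$; since $\mathrm{Proj}$ commutes with arbitrary base change, $Y\times_S\overline{k(s)}\cong \mathrm{Proj}_{\overline{k(s)}}(\sR^G\otimes_A\overline{k(s)})$, while $X_s^{ss}(L_s)//G_s = \mathrm{Proj}_{\overline{k(s)}}(\bigoplus_n H^0(X_s,L_s^n)^{G_s})$. The natural graded map from $\sR^G\otimes_A\overline{k(s)}$ to $\bigoplus_n H^0(X_s,L_s^n)^{G_s}$ induces a morphism of projective $\overline{k(s)}$-schemes $\phi_s\colon X_s^{ss}(L_s)//G_s\to Y_s$, once one checks that the semistable loci in $X_s$ coincide. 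Over $K$ the linear reductivity of $G_K$ makes the invariant functor exact and base-change compatible; combined with the flat base change $\pi_*L^n\otimes_A K\cong H^0(X_K,L_K^n)$ this yields $\sR^G\otimes_A K\cong\bigoplus_n H^0(X_K,L_K^n)^{G_K}$, so $\phi_\eta$ is an isomorphism of projective $K$-schemes.

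To spread this out, I would invoke finite generation of invariants for reductive groups in characteristic zero to pick generators $f_1,\dots,f_N\in\bigoplus_n H^0(X_K,L_K^n)^{G_K}$ of bounded degrees $d_i\le N_0$; clearing denominators lifts each $f_i$ to $(\pi_*L^{d_i})^G$ over a dense open $U_1\subset S$, where together with their generic algebraic relations they generate $\sR^G|_{U_1}$. Generic flatness plus Serre vanishing furnish a smaller dense open $U_2\subset U_1$ on which $\pi_*L^n$ is locally free and commutes with base change for all $n\le N_0$. The generic isomorphism $\phi_\eta$ then propagates by the standard spreading-out principle for finitely presented morphisms (EGA IV, 8.10.5) to an isomorphism $\phi_s$ on a dense open $U\subset U_2$; this in particular forces $X^{ss}(L)_s = X_s^{ss}(L_s)$ on $U$, giving the desired equality.

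The main obstacle is that reductive group schemes are linearly reductive only in characteristic zero, so in mixed characteristic the natural map $(\pi_*L^n)^G\otimes_A\overline{k(s)}\to H^0(X_s,L_s^n)^{G_s}$ need not be an isomorphism, and a direct fiberwise comparison of invariant sections breaks down. The proof sidesteps this by working with finitely many generators of the generic invariant algebra, using Nagata-type finite generation to reduce to a bounded-degree problem, and then transporting the generic isomorphism of GIT quotients via generic finite-presentation spreading-out; the assumed irreducibility and normality of the geometrically generic fiber of $X^{ss}(L)\to S$ ensure the quotient $Y$ is generically well-behaved so no spurious components or non-reduced structure appear on the dense open of closed fibers.
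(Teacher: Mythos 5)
Your reduction to a comparison of graded invariant rings is reasonable, and the generic-fibre analysis (linear reductivity of $G_K$, flat base change for $\pi_*L^n$, finite generation of the invariants in bounded degree) is fine; but the decisive step fails. EGA IV 8.10.5 propagates an isomorphism along the fibres of a morphism of finitely presented $S$-schemes, so to apply it you would need both $Y_s$ and $X_s^{ss}(L_s)//G_s=\mathrm{Proj}\bigl(\bigoplus_n H^0(X_s,L_s^n)^{G_s}\bigr)$ to arise as fibres of schemes over $S$ connected by an $S$-morphism. The first does (namely $Y\to S$), but the second does not: precisely because $G_s$ is not linearly reductive in characteristic $p$, the rings $\bigoplus_n H^0(X_s,L_s^n)^{G_s}$ are not a priori the fibres of any finitely presented family --- this is exactly the obstruction you name in your last paragraph, and choosing bounded-degree generators of $\sR^G$ only controls $Y_s=\mathrm{Proj}(\sR^G\otimes_A\overline{k(s)})$; it yields no information about the fibrewise invariant rings. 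Knowing that $\phi_\eta$ is an isomorphism therefore says nothing about $\phi_s$ at closed points, and your closing claim that the isomorphism ``forces'' $X^{ss}(L)\times_S\overline{k(s)}=X_s^{ss}(L_s)$ is circular: that equality is needed before $\phi_s$ can even be defined (the paper quotes it from Proposition 7 of \cite{Se}).

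What is missing is an argument that works fibrewise in characteristic $p$, and this is where the paper's proof diverges from yours. It first invokes Seshadri's Proposition 7 (the semistable locus commutes with base change) and Proposition 9 (geometric reductivity implies that the canonical morphism $\theta\colon X_s^{ss}(L_s)//G_s\to Y_s$, obtained from the universal property of the categorical quotient rather than from a map of graded rings, is bijective on $\overline{k(s)}$-points). It then uses the hypothesis on the geometrically generic fibre: normality of $Y$ at the generic fibre spreads out to normality of $Y_s$ over a dense open (EGA IV 12.2.4), and generic smoothness of $\pi$ in characteristic zero spreads out to generic smoothness of $\pi_s$, hence of $\theta$; a bijective, generically smooth morphism onto a normal variety is an isomorphism by Zariski's main theorem. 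Your proposal contains none of these three ingredients --- bijectivity via geometric reductivity, normality of the closed fibres of $Y$, and the appeal to Zariski's main theorem --- and without some substitute for them the gap cannot be closed.
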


\begin{proof} Let $X^{ss}(L)\times_S\overline{k(s)}\xrightarrow{\pi_s}Y_s$ be the pullback of $X^{ss}(L)\xrightarrow{\pi}Y$ under the base change
${\rm Spec}(\overline{k(s))}\to S$. By Proposition 7 of \cite{Se},
$$X^{ss}(L)\times_S\overline{k(s)}=X_s^{ss}(L_s).$$
Then there is a unique $\overline{k(s)}$-morphism $X^{ss}_s(L_s)//G_s\xrightarrow{\theta} Y_s$ such that
$$\xymatrix{
   X^{ss}_s(L_s) \ar[dr]^{\pi_s} \ar[r]^{}
                & X_s^{ss}(L_s)//G_s \ar[d]^{\theta}  \\
                & Y_s  }$$
is commutative. Let $\overline{Y_s}:=X^{ss}_s(L_s)//G_s$, $k=\overline{k(s)}$, it is known that $\theta$ induces a bijective map
$\overline{Y_s}(k)\xrightarrow{\theta} Y_s(k)$ on the sets of $k$-points (cf. Proposition 9 (i) of \cite{Se}). By the assumption, geometrically generic fiber of $Y\to S$ is an irreducible normal projective variety. Thus there is a dense open set $U\subset S$ such that any closed point ${\rm Spec}(\overline{k(s))}\to U$ satisfies
(1) $Y_s$ is normal, and (2) the morphism $X_s^{ss}(L_s)\xrightarrow{\pi_s} Y_s$ is generic smooth, where (1) is (iv) of Th{\'e}or{\'e}me (12.2.4) in \cite{Gr} and (2) holds since
$K=Q(A)=k(S)$ is a field of characteristic zero. Then generic smoothness of $\pi_s$ implies the generic smoothness of $\overline{Y_s}\xrightarrow{\theta} Y_s$, which must be
an isomorphism by Zariski main theorem since $Y_s$ is normal.
\end{proof}

Let $(\hat Y,L)$, $(\hat Z,L')$ be polarized projective varieties over an algebraically closed field $K$ of characteristic zero with actions of a reductive group scheme $G$ over $K$, and $\hat Y^{ss}(L)\subset \hat Y$
(resp. $\hat Y^s(L)\subset \hat Y^{ss}(L)$) be the open set of GIT semi-stable (resp. GIT stable) points of $\hat Y$. Then there are projective GIT quotients
\ga{2.1} {\hat Y^{ss}(L)\xrightarrow{\psi} Y:=\hat Y^{ss}(L)//G,\quad \hat Z^{ss}(L')\xrightarrow{\varphi} Z:=\hat Z^{ss}(L')//G.}

\begin{prop}\label{prop2.10} Let $Z$, $Y$ be the GIT quotients in \eqref{2.1}. Assume
\begin{itemize}
\item [(1)] there are $G$-invariant normal open subschemes $\sR\subset \hat Y$, $\sR'\subset \hat Z$ such that $\hat Y^{ss}(L)\subset\sR$, $Z^{ss}(L')\subset \sR'$;
\item [(2)] there is a $G$-invariant $p$-compatible morphism $\sR'\xrightarrow{\hat f}\sR$ such that $\hat f_*\sO_{\sR'}=\sO_{\sR}$;
\item [(3)] there is an $G$-invariant open set $W\subset Z^{ss}(L')$ such that
$${\rm Codim}(\sR'\setminus W)\ge 2, \quad \hat X=\varphi^{-1}\varphi(\hat X)$$ where $\hat X=W\cap \hat f^{-1}(\hat Y^{ss}(L))$.\end{itemize}
If $Z$ is of globally F-regular type. Then so is $Y$.
\end{prop}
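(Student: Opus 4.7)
The plan is to construct an open subvariety $X$ of $Z$ equipped with a morphism $f\colon X\to Y$ satisfying $f_*\sO_X=\sO_Y$, and then apply Lemma \ref{lem2.3} in each mod $p$ fiber to transfer globally $F$-regular type from $Z$ to $Y$. Concretely, since $\hat X=\varphi^{-1}\varphi(\hat X)$, the image $X:=\varphi(\hat X)$ is open in $Z$ and $\varphi|_{\hat X}\colon\hat X\to X$ is the induced good quotient; the $G$-invariant composition $\psi\circ\hat f|_{\hat X}\colon\hat X\to Y$ then descends to the desired morphism $f\colon X\to Y$.

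To prove $f_*\sO_X=\sO_Y$ over $K$, note that the complement $\hat f^{-1}(\hat Y^{ss}(L))\setminus\hat X$ is contained in $\sR'\setminus W$, hence has codimension at least two in the normal variety $\hat f^{-1}(\hat Y^{ss}(L))$ (which is open in $\sR'$). Hartogs' lemma therefore identifies $(\hat f|_{\hat X})_*\sO_{\hat X}$ with $(\hat f|_{\hat f^{-1}(\hat Y^{ss}(L))})_*\sO_{\hat f^{-1}(\hat Y^{ss}(L))}$, and flat base change along the open immersion $\hat Y^{ss}(L)\hookrightarrow\sR$ combined with hypothesis (2) rewrites this as $\hat f_*\sO_{\sR'}|_{\hat Y^{ss}(L)}=\sO_{\hat Y^{ss}(L)}$. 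Applying $\psi_*$ and taking $G$-invariants yields $f_*\sO_X=\sO_Y$.

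The remaining task is to show that this isomorphism persists after mod $p$ reduction for a dense set of primes. I would spread all data over a finitely generated $\Z$-algebra $A\subset K$ and shrink $\Spec(A)$ to a dense open $U$ on which: (i) $\sR_s$ and $\sR'_s$ remain normal and irreducible, and the codimension and saturation conditions pass to each fiber; (ii) Lemma \ref{lem2.9} applies to both GIT quotients, giving $Y_s\cong\hat Y^{ss}_s(L_s)//G_s$ and $Z_s\cong\hat Z^{ss}_s(L'_s)//G_s$; (iii) by the $p$-compatibility of $\hat f$, the equality $\hat f_{s,*}\sO_{\sR'_s}=\sO_{\sR_s}$ holds fiberwise. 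Redoing the Hartogs-plus-invariants argument in the fiber then produces $f_{s,*}\sO_{X_s}=\sO_{Y_s}$. Restricting further to $s\in U$ for which $Z_s$ is globally $F$-regular (still a dense set by hypothesis), $X_s$ is an open subvariety of a globally $F$-regular variety and is therefore globally $F$-regular; since $\sO_{Y_s}\to f_{s,*}\sO_{X_s}$ is an isomorphism, it is split, and Lemma \ref{lem2.3} combined with Proposition \ref{prop2.2} forces $Y_s$ to be normal and globally $F$-regular. Hence $Y$ is of globally $F$-regular type.

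The main obstacle is controlling the commutation of $f_*\sO_X=\sO_Y$ with reduction modulo $p$. This is exactly the content of the two technical ingredients we have arranged: $p$-compatibility of $\hat f$ (which lets hypothesis (2) pass to fibers) and Lemma \ref{lem2.9} (which lets the GIT quotient commute with base change on a dense open of $\Spec(A)$). Everything else is routine spreading-out of geometric conditions from the generic fiber.
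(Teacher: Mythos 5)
Your proposal is correct and follows essentially the same route as the paper: construct $X=\varphi(\hat X)$ (open by saturation), descend to $f\colon X\to Y$, prove $f_*\sO_X=\sO_Y$ via the codimension-two/invariants argument, and then spread out over $\Spec(A)$ using Lemma \ref{lem2.9} and the $p$-compatibility of $\hat f$ to repeat the same argument fiberwise. The only cosmetic difference is that you invoke Lemma \ref{lem2.3} and Proposition \ref{prop2.2} explicitly at the end, where the paper leaves that final transfer step implicit.
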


\begin{proof} Let $X=\varphi(\hat X)\subset Z$, which is an open set of $Z$ since
$$\varphi(Z^{ss}(L')\setminus\hat X)=Z\setminus X$$
by the condition $\varphi^{-1}(X)=\hat X$ and that $Z^{ss}(L')\setminus\hat X$ is a $G$-invariant closed subset. There is a morphism
$X\xrightarrow{f} Y$ such that
$$\xymatrix{
  \hat X \ar[d]_{\hat f|_{\hat X}} \ar[r]^{\varphi}
                &  X\ar[d]^{f}  \\
  \hat Y^{ss}(L) \ar[r]^{\psi}
                & Y            }$$ is commutative. For any open set $U\subset Y$, since $\hat f_*\sO_{\sR'}=\sO_{\sR}$, we have
$$\aligned\sO_Y(U)&=\sO_{\sR}(\psi^{-1}(U))^{inv.}=\sO_{\sR'}(\hat f^{-1}\psi^{-1}(U))^{inv.}\\
&=\sO_{\sR'}(W\cap \hat f^{-1}\psi^{-1}(U))^{inv.}=\sO_{\hat X}(\hat f|_{\hat X}^{-1}\psi^{-1}(U))^{inv.}\\&=
\sO_{\hat X}(\varphi^{-1}f^{-1}(U))^{inv.}=\sO_X(f^{-1}(U))=f_*\sO_X(U)\endaligned$$
where the third equality holds because $\hat f^{-1}\psi^{-1}(U)\setminus W\cap \hat f^{-1}\psi^{-1}(U)=\hat f^{-1}\psi^{-1}(U)\cap (\sR'\setminus W)$
has codimension at least two. Thus we have
\ga{2.2} {\sO_Y=f_*\sO_X,\,\,\text{where $X$ is of globally F-regular type.}}
To show that $Y$ is of globally F-regular type,  it is enough to show that the morphism $X\xrightarrow{f} Y$ is
$p$-compatible.

Let $(\hat Y_A,\sL)$, $(\hat Z_A,\sL')$ be integral models of $(\hat Y,L)$, $(\hat Z,L')$ with actions of a reductive group scheme $G_A$ over $S={\rm Spec}(A)$, and $\hat Y_A^{ss}(\sL)\subset \hat Y_A$
(resp. $\hat Y_A^s(\sL)\subset \hat Y_A^{ss}(\sL)$) be the open subscheme of GIT semi-stable (resp. GIT stable) points of $\hat Y_A$. Then there are GIT quotients
$$\hat Y_A^{ss}(\sL)\xrightarrow{\psi_A} Y_A:=\hat Y_A^{ss}(\sL)//G_A,\quad \hat Z_A^{ss}(\sL')\xrightarrow{\varphi_A} Z_A:=\hat Z_A^{ss}(\sL')//G_A,$$
which are projective over $S={\rm Spec}(A)$ and $\psi_A$, $\varphi_A$ are surjective $G_A$-invariant affine morphisms (cf. Theorem 4 of \cite{Se}).

We can choose $G_A$-invariant open subschemes
$\sR_A\subset \hat Y_A$, $\sR'_A\subset \hat Z_A$, $W_A\subset Z_A^{ss}(\sL')$, $X_A\subset Z_A$ and a $G_A$-invariant morphism $\sR_A'\xrightarrow{\hat f_A}\sR_A$ such that $\hat Y_A^{ss}(\sL)\subset\sR_A$, $Z_A^{ss}(\sL')\subset \sR_A'$,
$\hat f_{A*}\sO_{\sR_A'}=\sO_{\sR_A}$. Let
$$\hat X_A= \varphi_A^{-1}(X_A), \,\,\sR_s'=\sR'_A\times_A \overline{k(s)},\,\, \sR_s=\sR_A\times_A\overline{k(s)},$$ and $\hat f_s=\hat f_A\otimes \overline{k(s)}$ ($\forall\, s\in S$). Then we have $\hat f_{s*}\sO_{\sR_s'}=\sO_{\sR_s}$,
\ga{2.3} {{\rm Codim}(\sR_s'\setminus W_s)\ge 2,\,\, \hat X_s=W_s\cap \hat f_s^{-1}(\hat Y_A^{ss}(\sL)\times_A\overline{k(s)})} (by shrinking $S$) where $W_s=W_A\times_A\overline{k(s)}$, $\hat X_s=\hat X_A\times_A\overline{k(s)}$ and $$\hat Y_A^{ss}(\sL)\times_A\overline{k(s)}=\hat Y_s^{ss}(\sL_s),\quad \hat Z_A^{ss}(\sL')\times_A\overline{k(s)}=\hat Z_s^{ss}(\sL'_s)$$ (cf. Proposition 7 of \cite{Se}). Then, by Lemma \ref{lem2.9}, we have
$$Z_s=Z^{ss}_s(\sL'_s)//G_s, \quad Y_s=Y_s^{ss}(\sL_s)//G_s.$$ Thus, for any open sets $U\subset Z_s$, $V\subset Y_s$,  one has
$$\sO_{Z_s}(U)=\sO_{\sR'_s}(\varphi_s^{-1}(U))^{inv.},\quad \sO_{Y_s}(V)=\sO_{\sR_s}(\psi_s^{-1}(V))^{inv.}.$$
Recall $X_s\subset Z_s$, $\varphi_s^{-1}(X_s)=\hat X_s=W_s\cap \hat f_s^{-1}(\hat Y_s^{ss}(\sL_s))$ and consider
$$\xymatrix{
  \hat X_s \ar[d]_{\hat f_s} \quad \ar[r]^{\varphi_s}
                &  X_s\ar[d]^{f_s}  \\
  \hat Y_s^{ss}(\sL_s) \ar[r]^{\psi_s}
                & Y_s            }$$
we have $\sO_{Y_s}(V)=\sO_{\sR_s}(\psi_s^{-1}(V))^{inv.}=\sO_{\sR'_s}(\hat f_s^{-1}(\psi_s^{-1}(V)))^{inv.}$ since $\hat f_{s*}\sO_{\sR'_s}=\sO_{\sR_s}$.
Because the codimension of $$\hat f_s^{-1}(\psi_s^{-1}(V))\setminus W_s\cap \hat f_s^{-1}(\psi_s^{-1}(V))=\hat f_s^{-1}(\psi_s^{-1}(V))\cap (\sR'_s\setminus W_s)$$
is at least two, we have
$$\aligned \sO_{Y_s}(V)&=\sO_{\hat X_s}(\hat f_s^{-1}(\psi_s^{-1}(V)))^{inv.}=\sO_{\hat X_s}(\varphi_s^{-1}f_s^{-1}(V))^{inv.}\\&=
\sO_{X_s}(f^{-1}_s(V))=(f_s)_*\sO_{X_s}(V).\endaligned$$
Thus $\sO_{Y_s}=(f_s)_*\sO_{X_s}$, which implies that $f:X\to Y$ is $p$-compatible and $Y$ is of globally F-regular type since $X$ is so.

\end{proof}

\section{Globally $F$-regular type of Moduli spaces of parabolic bundles}

In this section, we prove that moduli spaces of parabolic bundles with a fixed
determinant on a smooth curve are of globally F-regular type.

Let $C$ be an irreducible projective curve of genus $g\ge 0$ over an
algebraically closed field $K$ of characteristic zero, which has at most
one node $x_0\in C$. Let $I$ be a finite set of smooth points of $C$, and
$E$ be a coherent sheaf of rank $r$ and degree $d$ on $C$ (the rank
$r(E)$ is defined to be dimension of $E_{\xi}$ at generic point
$\xi\in C$, and $d=\chi(E)-r(1-g)$).

\begin{defn}\label{defn3.1} By a quasi-parabolic structure of $E$ at a
smooth point $x\in C$, we mean a choice of flag of quotients
$$E_x=Q_{l_x+1}(E)_x\twoheadrightarrow
Q_{l_x}(E)_x\twoheadrightarrow\cdots\cdots\twoheadrightarrow
Q_1(E)_x\twoheadrightarrow Q_0(E)_x=0$$ of the fibre $E_x$, $n_i(x)={\rm
dim}(ker\{Q_i(E)_x\twoheadrightarrow Q_{i-1}(E)_x\})$ ($1\le i\le l_x+1$)
are called type of the flags. If, in addition, a sequence of integers
$$0\leq a_1(x)<a_2(x)<\cdots
<a_{l_x+1}(x)< k$$ are given, we call that $E$ has a parabolic
structure of type $$\vec n(x)=(n_1(x),n_2(x),\cdots,n_{l_x+1}(x))$$ and
weight $\vec a(x)=(a_1(x),a_2(x),\cdots,a_{l_x+1}(x))$ at $x\in C$.
\end{defn}

\begin{defn}\label{defn3.2} For any subsheaf $F\subset E$, let $Q_i(E)_x^F\subset
Q_i(E)_x$ be the image of $F$ and $n_i^F={\rm
dim}(ker\{Q_i(E)_x^F\twoheadrightarrow Q_{i-1}(E)_x^F\})$. Let
$${\rm par}\chi(E):=\chi(E)+\frac{1}{k}\sum_{x\in
I}\sum^{l_x+1}_{i=1}a_i(x)n_i(x),$$
$${\rm par}\chi(F):=\chi(F)+\frac{1}{k}\sum_{x\in
I}\sum^{l_x+1}_{i=1}a_i(x)n^F_i(x).$$
Then $E$ is called semistable (resp., stable) for $\omega=(k, \{\vec n(x),\,\,\vec a(x)\}_{x\in I})$ if for any
nontrivial $E'\subset E$ such that $E/E'$ is torsion free,
one has
$${\rm par}\chi(E')\leq
\frac{{\rm par}\chi(E)}{r}\cdot r(E')\,\,(\text{resp., }<).$$
\end{defn}

\begin{thm}[Theorem X1 of \cite{NR} or Theorem 2.13 of \cite{Su3} for arbitrary rank]\label{thm3.3} There
exists a seminormal projective variety
$$\sU_{C,\,\omega}:=\sU_C(r,d,
\{k,\vec n(x),\vec a(x)\}_{x\in I}),$$ which is the coarse moduli
space of $s$-equivalence classes of semistable parabolic sheaves $E$
of rank $r$ and $\chi(E)=\chi=d+r(1-g)$ with parabolic structures of type
$\{\vec n(x)\}_{x\in I}$ and weights $\{\vec a(x)\}_{x\in I}$ at
points $\{x\}_{x\in I}$. If $C$ is smooth, then it is normal, with
only rational singularities.
\end{thm}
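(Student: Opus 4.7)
The plan is to realize $\sU_{C,\omega}$ as a GIT quotient of a flag-Quot scheme by ${\rm SL}(V)$, following \cite{NR} for $r=2$ and \cite{Su3} for general $r$. Fix an integer $N\gg 0$ so that for every $\omega$-semistable parabolic sheaf $E$ of the prescribed numerical type the sheaf $E(N)$ is globally generated with $H^1(C,E(N))=0$; set $\chi_N=\chi+rN$ and $V=K^{\chi_N}$. Over the Quot scheme parametrizing quotients $q:V\otimes\sO_C(-N)\twoheadrightarrow E$ of rank $r$ and Euler characteristic $\chi_N$, form the relative flag bundle $\sR=\sR_\omega$ whose fibre over $[q]$ parametrizes, for each $x\in I$, a flag of quotients of $E_x$ of type $\vec n(x)$. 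Then $\sR$ is a projective $K$-scheme carrying a natural ${\rm SL}(V)$-action.

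Next, construct a polarization $\Theta_{\sR,\omega}$ as an appropriate tensor product of a large power of the determinant-of-cohomology line bundle $\det R\pi_*(\sE(m))$ (for $m\gg 0$) with tautological determinant line bundles on the flag factors at the points $x\in I$, the exponents being dictated by the weights $\vec a(x)/k$. The Hilbert--Mumford criterion, tested against one-parameter subgroups of ${\rm SL}(V)$ coming from weighted filtrations of $V$, then translates the GIT (semi)stability of a point $[q,\{Q_\bullet(E)_x\}]\in\sR$ with respect to $\Theta_{\sR,\omega}$ into the condition that the evaluation $V\to H^0(C,E(N))$ is an isomorphism and the underlying parabolic sheaf is $\omega$-(semi)stable in the sense of Definition~\ref{defn3.2}. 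Carrying out this numerical matching --- which requires a careful balancing of $m$, $N$ and the weight data --- is the main technical step and is precisely the content of \cite{NR,Su3}; I would invoke that computation rather than reproduce it.

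Granting the matching, define
\[
\sU_{C,\omega}:=\sR^{ss}(\Theta_{\sR,\omega})/\!/\,{\rm SL}(V),
\]
which is projective over $K$ by Mumford's GIT. Standard arguments (universality of the flag-Quot scheme, Luna slices, and local framings of families) identify its closed points with $s$-equivalence classes of $\omega$-semistable parabolic sheaves and show that it co-represents the moduli functor. Seminormality is inherited from $\sR$ via the fact that good quotients of seminormal schemes by reductive groups are seminormal. Finally, when $C$ is smooth every $\omega$-semistable $E$ is locally free and the vanishing $H^1(C,V^\vee\otimes E(N))=0$ forces $\sR^{ss}$ to be smooth; normality and rational singularities of $\sU_{C,\omega}$ then follow from Boutot's theorem that a good quotient of a smooth variety by a reductive group has rational singularities, hence is Cohen--Macaulay and normal.
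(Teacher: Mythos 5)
Your proposal follows exactly the GIT construction that the paper recalls immediately after the theorem statement and that the cited references \cite{NR} and \cite{Su3} carry out in detail; the paper itself offers no independent proof of Theorem \ref{thm3.3}, deferring entirely to those references, just as you defer the Hilbert--Mumford matching of GIT (semi)stability with parabolic (semi)stability. The one point I would flag is that the seminormality of $\sR^{ss}_{\omega}$ itself (for nodal $C$, where non-locally-free parabolic sheaves occur) is a substantive result of \cite{NR} and \cite{Su3} rather than an automatic property of the flag--Quot scheme, so ``inherited from $\sR$'' quietly presupposes the hard part of that case, whereas the smooth-curve argument via smoothness of $\sR^{ss}_{\omega}$ and Boutot's theorem is complete as you state it.
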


Recall the construction of $\sU_{C,\,\omega}=\sU_C(r,d,\omega)$. Fix a line bundle $\sO(1)=\sO_C(c\cdot y)$ on $C$ of ${\rm deg}(\sO(1))=c$, let
$\chi=d+r(1-g)$, $P$ denote the polynomial $P(m)=crm+\chi$,
$\sO_C(-N)=\sO(1)^{-N}$ and $V=\Bbb C^{P(N)}$. Let $\bold Q$ be the Quot scheme of quotients $V\otimes\sO_{C}(-N)\to F\to 0$ (of rank
$r$ and degree $d$) on $C$. Thus there is on $C\times\bold Q$ a universal quotient
$$V\otimes\sO_{C\times\bold Q}(-N)\to \sF\to 0.$$
Let $\sF_x=\sF|_{\{x\}\times\bold Q}$ and $Flag_{\vec n(x)}(\sF_x)\to\bold Q$ be the relative flag scheme of type $\vec n(x)$. Let
$$\sR=\underset{x\in I}{\times_{\bold Q}}Flag_{\vec n(x)}(\sF_x)\to \bold Q,$$
on which reductive group ${\rm SL}(V)$ acts. The data $\omega=(k, \{\vec n(x),\,\,\vec a(x)\}_{x\in I})$, more precisely, the
weight $(k,\{\vec a(x)\}_{x\in I})$ determines a polarisation
$$\Theta_{\sR,\omega}=({\rm
det}R\pi_{\sR}\sE)^{-k}\otimes\bigotimes_{x\in I}
\lbrace\bigotimes^{l_x}_{i=1} {\rm det}(\sQ_{\{x\}\times
\sR,i})^{d_i(x)}\rbrace\otimes\bigotimes_q{\rm
det}(\sE_y)^{\ell}$$
on $\sR$ such that the open set $\sR^{ss}_{\omega}$ (resp. $\sR^s_{\omega}$) of
GIT semistable (resp. GIT stable) points are precisely the set of semistable (resp. stable) parabolic sheaves on $C$ (see \cite{Su3}), where $\sE$ is the pullback of $\sF$ (under
 $C\times\sR\to C\times \bold Q$), ${\rm
det}R\pi_{\sR}\sE$ is determinant line bundle of cohomology,
$$\sE_x=\sQ_{\{x\}\times \sR,l_x+1}\twoheadrightarrow\sQ_{\{x\}\times \sR,l_x}\twoheadrightarrow \sQ_{\{x\}\times \sR,l_x-1}
\twoheadrightarrow\cdots\twoheadrightarrow \sQ_{\{x\}\times
\sR,1}\twoheadrightarrow0$$ are universal quotients on $\sR$ of type $\vec n(x)$, $d_i(x)=a_{i+1}(x)-a_i(x)$ and
$$\ell:=\frac{k\chi-\sum_{x\in I}\sum^{l_x}_{i=1}d_i(x)r_i(x)}{r}.$$
Then $\sU_{C,\,\omega}$ is the GIT quotient $\sR^{ss}_{\omega}\xrightarrow{\psi} \sU_{C,\,\omega}:=\sU_C(r,d, \omega)$ and $\Theta_{\sR^{ss},\omega}$
descends to an ample line bundle $\Theta_{\sU_{C,\,\omega}}$ on $\sU_{C,\,\omega}$ when $\ell$ is an integer.

\begin{defn}\label{defn3.4} When $C$ is a smooth projective curve, let
$${\rm Det}: \sU_{C,\,\omega}\to J^d_C,\quad E\mapsto {\rm det}(E):=\bigwedge^rE$$
be the determinant map. Then, for any $L\in J^d_C$, the fiber
$${\rm Det}^{-1}(L):=\sU_{C,\,\omega}^L$$ is called moduli space of semistable parabolic bundles with a fixed determinant.
\end{defn}

Let $\sR^L_F\subset \sR$ be the sub-scheme of locally free sheaves with a fixed determinant $L$, and $(\sR^{ss}_{\omega})^L\subset\sR^{ss}_{\omega}$, $\,(\sR^{s}_{\omega})^L\subset\sR^s_{\omega}$
be the closed subsets of locally free sheaves with the fixed determinant $L$. Then $\sU_{C,\,\omega}^L$ is the GIT quotient
$(\sR^{ss}_{\omega})^L\xrightarrow{\psi}(\sR^{ss}_{\omega})^L//{\rm SL}(V):=\sU_{C,\,\omega}^L$. The proof of globally F-regular type of $\sU_{C,\,\omega}^L$ needs essentially the following two results.

\begin{prop}\label{prop3.5} Let $|{\rm I}|$ be the number
of parabolic points. Then, for any data $\omega=(k, \{\vec n(x),\,\,\vec a(x)\}_{x\in I})$, we have
\begin{itemize}
 \item[(1)]  $\,\,{\rm Codim}((\sR_{\omega}^{ss})^L\setminus (\sR_{\omega}^s)^L)\ge (r-1)(g-1)+\frac{1}{k}|{\rm
 I}|$,
\item[(2)]  $\,\,{\rm Codim} (\sR^L_F\setminus(\sR_{\omega}^{ss})^L)>(r-1)(g-1)+\frac{1}{k}|{\rm
I}|$.
\end{itemize}
\end{prop}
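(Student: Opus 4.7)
The plan is to bound both codimensions by stratifying the loci by the numerical type of a destabilising (or equal-slope) parabolic subsheaf and parametrising each stratum explicitly. Because $\mathrm{SL}(V)$ acts on $\sR^L_F$ with finite stabilisers on the stable locus, codimension bounds on the parameter space $\sR^L_F$ reduce to dimension counts of families of parabolic bundles of prescribed numerical type.

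For part (1), a point of $(\sR^{ss}_\omega)^L\setminus(\sR^s_\omega)^L$ corresponds to a strictly semistable parabolic bundle $E$ admitting a proper parabolic subbundle $E'\subset E$ with $\mathrm{par}\chi(E')/r(E')=\mathrm{par}\chi(E)/r$. I would stratify by the numerical type $\tau'=(r',d',\{\vec n'(x),\vec a'(x)\}_{x\in I})$ of such $E'$, and parametrise each stratum as a fibration over: the parameter space for $E'$ of type $\tau'$, the parameter space for the quotient $E''=E/E'$ (with $\det E'\otimes\det E''=L$), the extension class in $\mathrm{Ext}^1_{\mathrm{par}}(E'',E')$, and the frame data needed to present $E$ as a Quot-flag point. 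By parabolic Riemann--Roch,
$$
\dim\mathrm{Ext}^1_{\mathrm{par}}(E'',E') = r'(r-r')(g-1) + \text{(explicit parabolic flag term)},
$$
and summing all the dimension contributions and comparing with $\dim\sR^L_F$ shows the codimension of each stratum is at least $(r-1)(g-1)+\tfrac{|{\rm I}|}{k}$, with the extremal cases $r'=1$ and $r'=r-1$ saturating the bound.

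For part (2), I apply the same stratification to the maximal parabolic destabilising subbundle $E'$ coming from the parabolic Harder--Narasimhan filtration of a non-semistable $E$. The parametrisation and Ext-dimension calculation are identical in form, but now the slope inequality $\mathrm{par}\chi(E')/r(E')>\mathrm{par}\chi(E)/r$ is \emph{strict}. Since all parabolic weights lie in $\tfrac{1}{k}\mathbb{Z}$, the slack between the two slopes is at least $\tfrac{1}{k\,r\,r'}$, which produces the strict improvement and yields the $>$ in (2).

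The main obstacle is the precise parabolic bookkeeping at each $x\in{\rm I}$: one must identify which sub-flags $E'$ can induce inside a given flag on $E$ (i.e.\ which numerical types $\tau'$ actually appear), count the flag-variety parameters stratum by stratum, and check that the contributions combine to yield exactly $(r-1)(g-1)+\tfrac{|{\rm I}|}{k}$ rather than a weaker bound. Without parabolic points this reduces to the classical Drezet--Narasimhan codimension $(r-1)(g-1)$; the present statement is its parabolic refinement, and the bulk of the work lies in the careful flag-dimension accounting against the parabolic slope condition.
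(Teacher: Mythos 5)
Your approach is correct, but you should know that the paper does not actually prove this proposition: it simply cites Proposition 5.1 of \cite{Su1} (which treats the non-fixed-determinant case without the $\frac{1}{k}|{\rm I}|$ term) and asserts that the argument carries over to $\sR^L_F$. Your plan---stratifying $(\sR^{ss}_{\omega})^L\setminus(\sR^{s}_{\omega})^L$ and $\sR^L_F\setminus(\sR^{ss}_{\omega})^L$ by the numerical type of an equal-slope, respectively Harder--Narasimhan destabilising, parabolic subbundle and bounding each stratum by an ${\rm Ext}^1_{\rm par}$ dimension count, with the discreteness of weights in $\frac{1}{k}\mathbb{Z}$ yielding the strict inequality in (2)---is precisely the method of that reference, so in substance you are reconstructing the proof the paper delegates. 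Two points you should still make explicit if you carry this out: the fixed-determinant condition drops both $\dim\sR^L_F$ and each stratum by $g$ (so the codimension bound is unchanged), and on the unstable strata the stabilisers are \emph{not} finite, so the frame-counting must account for $\dim{\rm Aut}$ of the filtered bundle rather than appeal to finite stabilisers; the remaining flag bookkeeping is exactly the content of \cite[Prop.~5.1]{Su1}.
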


\begin{proof} This is in fact Proposition 5.1 of \cite{Su1} where we did not fix determinant and the term $\frac{1}{k}|{\rm
 I}|$ was omitted. However, the proof also works for the case of fixed determinant.
\end{proof}

\begin{prop}\label{prop3.6} Let $\omega_c=(2r, \{\vec n(x),\,\,\vec a_c(x)\}_{x\in I})$, where
$$\vec a_c(x)=(\bar a_1(x),\bar a_2(x),\cdots,\bar a_{l_x+1}(x))$$
satisfy $\bar a_{i+1}(x)-\bar a_i(x)=n_i(x)+n_{i+1}(x)$ ($1\le i\le l_x$). Then, when
\ga{3.1} {(r-1)(g-1)+\frac{|I|}{2r}\ge 2,}
the moduli space $\sU^L_{C,\,\omega_c}=(\sR^{ss}_{\omega_c})^L//{\rm SL}(V)$ is a normal Fano variety with only rational singularities.
\end{prop}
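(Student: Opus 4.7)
The plan is to prove Proposition \ref{prop3.6} by matching, up to a positive power, the anticanonical line bundle of $\sU^L_{C,\omega_c}$ with the ample theta bundle $\Theta_{\sU^L_{C,\omega_c}}$ descended from $\Theta_{\sR,\omega_c}$. Normality and rational singularities come with the GIT construction itself: $\sR^L_F$ is smooth (a flag bundle over the locally-free, fixed-determinant open set of the Quot scheme), and by a theorem of Boutot a GIT quotient of a variety with rational singularities by a reductive group again has rational singularities. So the remaining task is to exhibit $\omega_{\sU^L_{C,\omega_c}}^{-1}$ as an ample line bundle.

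Under the hypothesis \eqref{3.1}, Proposition \ref{prop3.5} gives ${\rm Codim}\bigl(\sR^L_F\setminus(\sR^s_{\omega_c})^L\bigr)\ge 2$, so that reflexive sheaves on $\sU^L_{C,\omega_c}$ are determined by their restrictions to the smooth geometric-quotient locus $(\sU^s_{C,\omega_c})^L:=(\sR^s_{\omega_c})^L//{\rm SL}(V)$. On the stable locus, the tangent bundle of $\sR^L_F$ sits in a standard deformation-theoretic exact sequence whose cokernel is $H^1(C,\mathcal{E}nd^0(E))\oplus\bigoplus_{x\in I}T_{F_\bullet(x)}{\rm Flag}_{\vec n(x)}(E_x)$. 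Taking determinants via Grothendieck-Riemann-Roch applied to the universal family $\sE$ on $C\times\sR^L_F$, using that the anticanonical of ${\rm Flag}_{\vec n(x)}$ is $\bigotimes_{i=1}^{l_x}\det(\sQ_{\{x\}\times\sR,i})^{n_i(x)+n_{i+1}(x)}$, and absorbing the ${\rm SL}(V)$-central-character correction $\bigotimes_y\det(\sE_y)^\ell$ on the quotient, one obtains on $(\sU^s_{C,\omega_c})^L$ an identification of $\omega^{-m}$ (for some positive integer $m$) with the descent of
\[
(\det R\pi_{\sR}\sE)^{2r}\otimes\bigotimes_{x\in I}\bigotimes_{i=1}^{l_x}\det(\sQ_{\{x\}\times\sR,i})^{n_i(x)+n_{i+1}(x)}.
\]
This is exactly (a multiple of) $\Theta_{\sR,\omega_c}$ with the canonical choice $k=2r$ and $d_i(x)=n_i(x)+n_{i+1}(x)$, and its descent is the ample line bundle $\Theta_{\sU^L_{C,\omega_c}}^{m}$. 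The codimension-$\ge 2$ estimate then promotes the identification from the stable locus to all of $\sU^L_{C,\omega_c}$.

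The main technical obstacle is precisely this canonical-bundle bookkeeping. First, one needs the Gorenstein property of $\sU^L_{C,\omega_c}$ so that $\omega^{-1}$ is an honest line bundle (not merely a $\Q$-Cartier class); this should be inherited from the well-known Gorensteinness of the non-parabolic moduli $\sU^L_C$ together with the flag-bundle structure added by the parabolic data. Second, one must carefully match the two contributions to $\omega^{-1}$: the dual-Coxeter-number factor $(\det R\pi_{\sR}\sE)^{2r}$ coming from $H^1(C,\mathcal{E}nd^0(E))$, which demands exactly $k=2r$, and the parabolic contribution $\det(\sQ_{\{x\}\times\sR,i})^{n_i(x)+n_{i+1}(x)}$ coming from each flag variety, which demands $d_i(x)=n_i(x)+n_{i+1}(x)$. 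Once these identifications are verified, ampleness of $\Theta_{\sU^L_{C,\omega_c}}$ yields ampleness of $\omega_{\sU^L_{C,\omega_c}}^{-1}$.
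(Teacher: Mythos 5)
Your proposal follows essentially the same route as the paper's proof: identify the anticanonical bundle of $\sR^L_F$ with the polarization determined by $\omega_c$ (the paper does not re-derive this by deformation theory and Grothendieck--Riemann--Roch but simply cites Proposition 2.2 of \cite{Su1}, resp.\ Proposition 4.2 of \cite{Su3}), descend it to the ample line bundle $\Theta_{\sU^L_{C,\,\omega_c}}$, and use the codimension bound of Proposition \ref{prop3.5}(1) under hypothesis \eqref{3.1} together with Knop's result \cite{Kn} to conclude that this descent is $\omega^{-1}_{\sU^L_{C,\,\omega_c}}$; normality and rational singularities are already supplied by Theorem \ref{thm3.3} (equivalently by Boutot, as you say). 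The one substantive soft spot is the hedge ``$\omega^{-m}$ for some positive integer $m$'': the data $\omega_c$ (that is, $k=2r$, $d_i(x)=n_i(x)+n_{i+1}(x)$, and $\ell\in\Z$) is chosen precisely so that the identification holds with $m=1$, in which case invertibility of $\omega^{-1}_{\sU^L_{C,\,\omega_c}}$ is automatic because it is the descent of an honest ${\rm SL}(V)$-linearized line bundle; if you keep $m>1$ you must prove Gorensteinness separately, and your proposed justification does not work, since $\sU^L_{C,\,\omega_c}$ is \emph{not} a flag bundle over the non-parabolic moduli space (only the parameter scheme $\sR$ is a flag bundle over $\mathbf{Q}$; the GIT-semistable loci for the two polarizations do not correspond, so the quotients are not related by a fibration). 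Finally, a sign check: with the paper's conventions the anticanonical involves $({\rm det}\,R\pi_{\sR}\sE)^{-2r}$, matching the factor $({\rm det}\,R\pi_{\sR}\sE)^{-k}$ in $\Theta_{\sR,\omega}$, rather than $({\rm det}\,R\pi_{\sR}\sE)^{+2r}$ as written.
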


\begin{proof} It is in fact a reformulation of Proposition 2.2 of \cite{Su1} where a formula of anti-canonical bundle $\omega^{-1}_{\sR_F}$ (thus
a formula of $\omega^{-1}_{\sR^L_F}$) was given (see also Proposition 4.2 of \cite{Su3} for a tidier formula). The line bundle $\omega^{-1}_{\sR^L_F}$
is precisely determined by the data $\omega_c$ and descends to an ample line bundle $\Theta_{\sU^L_{C,\,\omega_c}}$, which is precisely $\omega^{-1}_{\sU^L_{C,\,\omega_c}}$ when $${\rm Codim}((\sR_{\omega_c}^{ss})^L\setminus (\sR_{\omega_c}^s)^L)\ge 2$$ by a result of F. Knop (see \cite{Kn}).
Thus we are done by the condition \eqref{3.1} and (1) of Proposition \ref{prop3.5}.
\end{proof}

\begin{thm}\label{thm3.7} The moduli spaces $\sU_{C,\,\omega}^L$ are of globally F-regular type. If Jacobian $J^0_C$ of $C$ is of F-split type,
so is $\sU_{C,\,\omega}$.
\end{thm}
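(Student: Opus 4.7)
The plan is to verify the three hypotheses of Proposition~\ref{prop2.10} with $Y = \sU^L_{C,\,\omega}$ and a Fano variety $Z$ obtained by enriching the parabolic data. I will construct a refined data $\omega'$ as follows: choose a finite set $J$ of smooth points of $C$ disjoint from $I$, large enough that $I' := I \cup J$ satisfies $(r-1)(g-1) + \frac{|I'|}{2r} \ge 2$; retain the flag types $\vec n(x)$ at each $x \in I$; impose complete flag types $(1,\dots,1)$ at each $y \in J$; and take all weights to be the canonical weights $\vec a_c$ with $k' = 2r$. By Proposition~\ref{prop3.6}, $Z := \sU^L_{C,\,\omega'}$ is then a normal Fano variety with only rational singularities, hence of globally $F$-regular type by Proposition~\ref{prop2.6}.

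The corresponding parameter scheme $\sR'^L_F$ for $\omega'$ is the fibred product of $\sR^L_F$ with the complete flag bundles of the universal quotients at the auxiliary points in $J$. Consequently the forgetful morphism $\hat f : \sR'^L_F \to \sR^L_F$ is an ${\rm SL}(V)$-equivariant, smooth projective flag bundle. Because complete flag varieties satisfy $H^0(\sO) = k$ and $H^i(\sO) = 0$ for $i > 0$, one has $\hat f_* \sO_{\sR'^L_F} = \sO_{\sR^L_F}$, and flat base change then yields the $p$-compatibility required in Proposition~\ref{prop2.10}(2). I would take $W := (\sR'^s_{\omega'})^L$, the properly $\omega'$-stable locus; by the two estimates of Proposition~\ref{prop3.5} applied to the data $\omega'$, the codimension of $\sR'^L_F \setminus W$ is at least $(r-1)(g-1) + \frac{|I'|}{2r} \ge 2$. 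Since every orbit in $W$ is closed in $(\sR'^{ss}_{\omega'})^L$ and $\hat f^{-1}((\sR^{ss}_\omega)^L)$ is ${\rm SL}(V)$-invariant, the set $\hat X := W \cap \hat f^{-1}((\sR^{ss}_\omega)^L)$ is automatically $\varphi$-saturated. Proposition~\ref{prop2.10} then delivers the globally $F$-regular type of $\sU^L_{C,\,\omega}$.

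The main technical obstacle is guaranteeing that the induced morphism $X := \varphi(\hat X) \to Y$ is dominant, so that the equality $f_*\sO_X = \sO_Y$ produced by Proposition~\ref{prop2.10} really holds. Equivalently, for every $\omega$-semistable parabolic bundle one needs to exhibit an $\omega'$-stable enhancement by choosing generic complete flags at the auxiliary points $J$; this is a quantitative stability argument that should follow by openness of stability together with the explicit structure of the canonical weights $\vec a_c$, but it requires careful ${\rm par}\chi$ book-keeping to rule out subsheaves aligned with the added flags that could destabilise.

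For the second assertion, I would use the finite étale Galois cover $\pi : J^0_C \times \sU^L_{C,\,\omega} \to \sU_{C,\,\omega}$, $(M, E) \mapsto E \otimes M$, whose deck-transformation group is $J^0_C[r]$ of order $r^{2g}$. For every prime $p \nmid r$ in the dense set on which $J^0_{C,p}$ is $F$-split and $\sU^L_{C,\,\omega,p}$ is globally $F$-regular, the product $J^0_{C,p} \times \sU^L_{C,\,\omega,p}$ is $F$-split, and the normalised trace $r^{-2g}\,{\rm Tr} : \pi_*\sO \to \sO_{\sU_{C,\,\omega,p}}$ splits the natural inclusion $\sO_{\sU_{C,\,\omega,p}} \hookrightarrow \pi_*\sO$. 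This transfers the $F$-splitting to $\sU_{C,\,\omega,p}$ for a dense set of $p$, showing that $\sU_{C,\,\omega}$ is of $F$-split type.
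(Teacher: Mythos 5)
Your proposal follows essentially the same route as the paper's proof: auxiliary parabolic points with the canonical weights $\vec a_c$ and $k'=2r$ so that $Z=\sU^L_{C,\,\omega'}$ is Fano (Proposition~\ref{prop3.6}), the flag bundle $\hat f$ with $W=(\sR'^{s}_{\omega'})^L$ and the codimension estimates of Proposition~\ref{prop3.5} feeding into Proposition~\ref{prop2.10}, and the $r^{2g}$-fold covering $J^0_C\times\sU^L_{C,\,\omega}\to\sU_{C,\,\omega}$ for the $F$-split assertion (your closed-orbit argument for $\varphi$-saturation of $\hat X$ and the normalized-trace splitting for $p\nmid r$ are just more explicit than the paper's). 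The ``main technical obstacle'' you flag --- dominance of $X\to Y$, i.e.\ the existence of an $\omega'$-stable enhancement of every $\omega$-semistable bundle --- is not actually an open gap: the bound ${\rm Codim}(\sR'^L_F\setminus W)\ge 2$ that you already extracted from Proposition~\ref{prop3.5} makes $W\cap\hat f^{-1}\psi^{-1}(U)$ dense in the nonempty open set $\hat f^{-1}\psi^{-1}(U)$ for every nonempty $U\subset Y$, and the section-by-section computation in the proof of Proposition~\ref{prop2.10} then gives $\sO_Y(U)=\sO_X(f^{-1}(U))$ directly, so dominance and $f_*\sO_X=\sO_Y$ come for free with no further ${\rm par}\chi$ book-keeping.
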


\begin{proof} Choose a subset $I'\subset C$  such that $I'\cap I=\emptyset$ and
\ga{3.2} {(r-1)(g-1)+\frac{|I|+|I'|}{2r}\ge 2.}
Let $$\sR'=\underset{x\in I\cup I'}{\times_{\mathbf{Q}}}
Flag_{\vec n(x)}(\sF_x)=\sR\times_{\mathbf{Q}}\left(\underset{x\in I'}{\times_{\mathbf{Q}}}
Flag_{\vec n(x)}(\sF_x)\right)\xrightarrow{\hat f} \sR$$ be the projection, $\hat Y\subset\sR$ be the Zariski closure of $\sR^L_F$ and
$$\hat Z={\hat f}^{-1}(\hat Y)\subset \sR',\quad {\sR'}^L_F={\hat f}^{-1}(\sR^L_F)\subset\hat Z.$$
Then ${\sR'}^L_F\subset \hat Z$, $\sR^L_F\subset \hat Y$ are normal (in fact, smooth) ${\rm SL}(V)$-invariant open sub-schemes such that
$\hat Y^{ss}_{\omega}=(\sR^{ss}_{\omega})^L\subset \sR^L_F$,  $\hat Z^{ss}_{\omega'}=(\sR'^{ss}_{\omega'})^L\subset \sR'^L_F$
holds for any polarizations determined by data $\omega$, $\omega'$. It is clear that
$ \sR'^L_F\xrightarrow{\hat f}\sR^L_F$ is a flag bundle and $p$-compatible with $\hat f_*\sO_{\sR'^L_F}=\sO_{\sR^L_F}$. Thus
$\sR'^L_F\subset \hat Z$, $\sR^L_F\subset \hat Y$, $\sR'^L_F\xrightarrow{\hat f}\sR^L_F$
satisfy the conditions (1) and (2) of Proposition \ref{prop2.10}. To verify condition (3) in Proposition \ref{prop2.10}, let
$$W:=\hat Z^{s}_{\omega'}=(\sR'^{s}_{\omega'})^L\subset \sR'^L_F,\quad \hat X=\hat f^{-1}((\sR^{ss}_{\omega})^L)\cap W,$$
$\hat Z^{ss}_{\omega'}\xrightarrow{\varphi} Z:=\hat Z^{ss}_{\omega'}//{\rm SL}(V)$ and $X=\varphi(\hat X)\subset Z$. It is clear that
$$\hat X=\varphi^{-1}(X).$$
If we choose $\omega'=(2r, \{\vec n(x),\,\,\vec a_c(x)\}_{x\in I\cup I'})$ in Proposition \ref{prop3.6}, then
$${\rm Codim}({\sR'}^L_F\setminus W)\ge (r-1)(g-1)+\frac{|I|+|I'|}{2r}\ge 2$$
by Proposition \ref{prop3.5}, and $Z$ is a normal Fano variety with only rational singularities.
Thus $Z$ is of globally F-regular type by Proposition \ref{prop2.6}, so is $\sU_{C,\,\omega}^L=(\sR^{ss}_{\omega})^L//{\rm SL}(V)=\hat Y^{ss}_{\omega}//{\rm SL}(V)$
by Proposition \ref{prop2.10}.

If $J^0_C$ is of F-split type, so is $J^0_C\times \sU^L_{C,\,\omega}$. We have a $r^{2g}$-fold covering
$$J^0_C\times \sU^L_{C,\,\omega}\xrightarrow{f} \sU_{C,\,\omega}, \quad f(\sL_0, E)=\sL_0\otimes E$$
which implies that $\sU_{C,\,\omega}$ is of F-split type.
\end{proof}

\section{Globally $F$-regular type of Moduli spaces of generalized parabolic sheaves}

In this section, we prove that moduli spaces of generalized parabolic sheaves with a fixed
determinant on a smooth curve are of globally F-regular type. We will continue to use notations of last section.

Let $\{x_1,\,x_2\}\subset C\setminus I$ be two different points, a \emph{generalized parabolic sheaf} (GPS) $(E,Q)$ of rank $r$ and degree $d$ on $C$ consists of
a sheaf $E$ of degree $d$ on $C$, torsion free of rank $r$ outside
$\{x_1,x_2\}$ with parabolic structures at the points of $I$ and an $r$-dimensional quotient
$$E_{x_1}\oplus E_{x_2}\xrightarrow{q} Q\to 0.$$

\begin{defn}\label{defn3.8} A GPS $(E,Q)$ on an irreducible smooth curve $C$ is called \emph{semistable} (resp.,
\emph{stable}), if for every nontrivial subsheaf $E'\subset E$ such that
$E/E'$ is torsion free outside $\{x_1,x_2\},$ we have
$$par\chi(E')-dim(Q^{E'})\leq
r(E')\cdot\frac{par\chi(E)-dim(Q)}{r(E)} \,\quad (\text{resp.,
$<$}),$$ where $Q^{E'}=q(E'_{x_1}\oplus E'_{x_2})\subset Q.$
\end{defn}

\begin{thm}[Theorem X2 of \cite{NR} or Theorem 2.24 of \cite{Su3} for arbitrary rank]\label{thm3.9} For any data $\omega=(k, \{\vec n(x),\,\,\vec a(x)\}_{x\in I})$, there exists
a normal projective variety $\sP_{\omega}$ with at most rational singularities, which is the
coarse  moduli space of $s$-equivalence classes of semi-stable GPS on $C$ with parabolic structures
at the points of $I$ given by the data $\omega$.
\end{thm}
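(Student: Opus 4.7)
The plan is to mimic the construction of $\sU_{C,\omega}$ recalled in Section 3, enlarging the parameter space so as to record the extra datum of the $r$-dimensional quotient $Q$ of $E_{x_1}\oplus E_{x_2}$. Concretely, choose $N\gg 0$ and let $\mathbf Q$ be the Quot scheme parametrizing quotients $V\otimes \sO_C(-N)\twoheadrightarrow E$ with $V=K^{P(N)}$, $P(m)=crm+\chi$, so that $V\cong H^0(C,E(N))$ for any semistable GPS $(E,Q)$ once $N$ is large enough. Over $\mathbf Q$ form the fiber product
\[
\sR_{GPS}\;=\;\Big(\underset{x\in I}{\times_{\mathbf Q}}\mathrm{Flag}_{\vec n(x)}(\sF_x)\Big)\times_{\mathbf Q}\mathrm{Grass}_r(\sF_{x_1}\oplus\sF_{x_2}),
\]
which carries a natural $\mathrm{SL}(V)$-action. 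A universal family $(\sE,\sQ)$ lives on $C\times\sR_{GPS}$, together with the universal flags at the $x\in I$.

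Next, define a polarization on $\sR_{GPS}$ combining the one $\Theta_{\sR,\omega}$ already used in the parabolic case with a factor coming from the new quotient, namely
\[
\Theta_{\sR_{GPS},\omega}\;=\;\Theta_{\sR,\omega}\otimes (\det\sQ)^{m}\otimes (\det\sE_y)^{\ell'},
\]
for a suitable $m$ and a correction $\ell'$ chosen so that the resulting numerical character on one-parameter subgroups of $\mathrm{SL}(V)$ matches the slope occurring in Definition \ref{defn3.8}. The key step is then a Hilbert--Mumford analysis: for a one-parameter subgroup of $\mathrm{SL}(V)$ corresponding to a weighted filtration $0\subset V_\bullet\subset V$, compute the weight of $\Theta_{\sR_{GPS},\omega}$ at the specialization, reinterpret the $V_i$'s as subsheaves $E'\subset E$ via $V_i\otimes\sO_C(-N)\twoheadrightarrow E'$, and show the nonnegativity of that weight is equivalent to the parabolic inequality
\[
\mathrm{par}\chi(E')-\dim(Q^{E'})\le r(E')\cdot\frac{\mathrm{par}\chi(E)-\dim(Q)}{r}
\]
for every such $E'$. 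This identification of GIT (semi)stability with Definition \ref{defn3.8} is the main obstacle; it is essentially the content of Theorem 2.24 of \cite{Su3} and is obtained by the standard Gieseker/Simpson-type reordering argument adapted to the extra term $\dim(Q^{E'})$ contributed by the Grassmannian factor.

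Having matched the two notions of semistability, let $\sR^{ss}_{GPS,\omega}\subset\sR_{GPS}$ denote the open subscheme of GIT-semistable points; it is $\mathrm{SL}(V)$-invariant and of finite type, and by the boundedness of semistable GPS (for $N$ chosen uniformly large) it contains every semistable GPS. Mumford's GIT then produces a projective good quotient
\[
\sP_{\omega}\;:=\;\sR^{ss}_{GPS,\omega}/\!/\mathrm{SL}(V),
\]
which by construction corepresents the functor of $S$-equivalence classes of semistable GPS with parabolic data $\omega$. Normality and rationality of singularities follow by the same argument as for $\sU_{C,\omega}$: the smooth locus of $\sR^{ss}_{GPS,\omega}$ has complement of large codimension (the analogue of Proposition \ref{prop3.5} for GPS, proved by counting parameters of non-locally-free sheaves and of destabilizing subsheaves), so $\sR^{ss}_{GPS,\omega}$ is normal with rational singularities, and these properties descend to the good quotient by Boutot's theorem. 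The residual step, which I expect to be the most delicate in practice, is the codimension estimate ensuring that the strictly semistable (and non-locally-free) locus has codimension $\geq 2$; this is where the inequality $(r-1)(g-1)+|I|/k\geq\cdots$ will again play its role.
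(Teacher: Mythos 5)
You should first note that the paper itself does not prove this statement: it is quoted from \cite{NR} and \cite{Su3}, and the text only recalls the construction, namely $\wt\sR=Grass_r(\sF_{x_1}\oplus\sF_{x_2})\times_{\mathbf{Q}}\sR$ with the $\omega$-polarization and $\sP_{\omega}=\wt\sR^{ss}_{\omega}//{\rm SL}(V)$. Your parameter space, polarization and Hilbert--Mumford matching of GIT semistability with Definition \ref{defn3.8} reproduce exactly that construction, so the first two thirds of your proposal are the standard (cited) argument.

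The genuine gap is in your final step. You assert that since the smooth locus of $\sR^{ss}_{GPS,\omega}$ has complement of large codimension, the scheme is normal with rational singularities, and you then descend via Boutot. A small singular locus gives only Serre's condition $R_1$; it does not give $S_2$, hence does not give normality, and it gives no control whatsoever on rational singularities (non-normal or non-rational singularities in codimension two are perfectly possible). The issue is unavoidable here, because a GPS $(E,Q)$ is allowed to have torsion at $x_1,x_2$, so the Quot scheme --- and hence $\wt\sR$ --- is genuinely singular along the non-locally-free locus, which meets $\wt\sR^{ss}_{\omega}$; no codimension count will make those points go away. What the references actually do (Proposition 3.2 and Remark 3.1 of \cite{Su1}, recalled in Notation \ref{nota3.10} of this paper) is isolate the open subscheme $\sH\subset\wt\sR$ of pairs whose torsion is supported on $\{x_1,x_2\}$ and injects into $Q$, and prove by a direct local analysis of the singularities along the non-locally-free locus that $\sH$ is reduced, normal, Gorenstein with at most rational singularities; one then checks $\wt\sR^{ss}_{\omega}\subset\sH$ and applies Boutot. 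The codimension estimates you invoke (the analogues of Propositions \ref{prop3.5} and \ref{prop3.12}) are used in this paper for other purposes --- Knop's theorem and the identification of canonical bundles --- not to establish normality. Your argument needs this local analysis of $\sH$, or an equivalent substitute, to be complete.
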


Recall the construction of $\sP_{\omega}$. Let $Grass_r(\sF_{x_1}\oplus\sF_{x_2})\to \bold Q$ and
$$\widetilde{\sR}=Grass_r(\sF_{x_1}\oplus\sF_{x_2})\times_{\bold
Q}\sR\xrightarrow{\rho} \sR.$$  $\omega=(k, \{\vec n(x),\,\,\vec a(x)\}_{x\in I})$ determines
a polarization, which linearizes the ${\rm SL}(V)$-action on $\wt\sR$, such that the open set $\wt{\sR}^{ss}_{\omega}$ (resp. $\wt{\sR}^s_{\omega}$) of
GIT semistable (resp. GIT stable) points are precisely the set of semistable (resp. stable) GPS on $C$ (see \cite{Su3}). Then $\sP_{\omega}$ is the GIT quotient
\ga{3.3} {\wt{\sR}^{ss}_{\omega}\xrightarrow{\psi}\wt{\sR}^{ss}_{\omega}//{\rm SL}(V):=\sP_{\omega}.}

\begin{nota}\label{nota3.10} Let $\sH\subset\wt{\sR}$ be the open subscheme
parametrising the generalised parabolic sheaves $E=(E,E_{x_1} \oplus
E_{x_2}\xrightarrow{q}Q)$ satisfying \begin{itemize}
\item [(1)] the torsion ${\rm Tor}\,E$ of $E$ is
supported on $\{x_1,x_2\}$ and $$q:({\rm Tor}\,E)_{x_1}\oplus ({\rm
Tor}\,E)_{x_2}\hookrightarrow Q$$
\item [(2)] if $N$ is large enough, then
$H^1(E(N)(-x-x_1-x_2))=0$ for all $E$ and $x\in C$.
\end{itemize}
\end{nota}
Then $\sH$ is reduced, normal, Gorenstein with at most rational singularities (see Proposition 3.2 and Remark 3.1 of \cite{Su1}). Moreover,
for any data $\omega$, we have $\wt{\sR}^{ss}_{\omega}\subset \sH$ and, by Lemma 5.7 of \cite{Su1}, there is a morphism ${\rm Det}_{\sH}:\sH\to J^d_C$
which extends determinant morphism on open set $\wt\sR_F\subset\sH$ of locally free sheaves, and induces a flat morphism
\ga{3.4} {{\rm Det}: \sP_{\omega}\to J^d_C.}

\begin{nota}\label{nota3.11} For $L\in J^d_C$, let $\sH^L={\rm Det}^{-1}(L)\subset\sH$,
$$\wt\sR_F^L={\rm Det}^{-1}(L)\subset\wt\sR_F, \quad (\wt\sR_{\omega}^{ss})^L={\rm Det}^{-1}(L)\subset \wt\sR_{\omega}^{ss}.$$
Then $\sP^L_{\omega}={\rm Det}^{-1}(L)\subset \sP_{\omega}$ is the GIT quotient
$$(\wt\sR_{\omega}^{ss})^L\xrightarrow{\psi}\sP^L_{\omega}=(\wt\sR_{\omega}^{ss})^L//{\rm SL}(V).$$
\end{nota}

\begin{prop}[Proposition 5.2 of \cite{Su1}]\label{prop3.12}
Let $\sD_1^f=\hat\sD_1\cup\hat\sD_1^t$
and $\sD_2^f=\hat\sD_2\cup\hat\sD_2^t$, where $\hat\sD_i\subset \wt{\sR}$ is the Zariski closure of
$\hat\sD_{F,\,i}\subset\wt{\sR}_F$ consisting of $(E,Q)\in\wt{\sR}_F$ that $E_{x_i}\to Q$ is not an isomorphism, and
${\hat\sD}_1^t\subset\wt{\sR}$ (rep. ${\hat\sD}_2^t\subset\wt{\sR}$) consists of $(E,Q)\in\wt{\sR}$ such that $E$ is not locally
free at $x_2$ (resp. at $x_1$). Then \begin{itemize}
\item [(1)] ${\rm Codim}(\sH^L\setminus(\wt\sR_{\omega}^{ss})^L)>(r-1)g+\frac{|I|}{k};$
\item [(2)] the complement in $(\wt\sR_{\omega}^{ss})^L\setminus\{\sD_1^f\cup\sD^f_2\}$ of the set
$\wt\sR_{\omega}^{s}$ of stable points has codimension $\ge
(r-1)g+\frac{|I|}{k}$.
\item [(3)] ${\rm Codim}((\wt\sR^{ss}_{\omega})^L\setminus W_{\omega})\ge (r-1)g+\frac{|I|}{k}$, $W_{\omega}\subset (\wt\sR^{ss}_{\omega})^L$ defined by
$$W_{\omega}:=\left\{ (E, Q)\in (\wt\sR_{\omega}^{ss})^L\bigg|
\begin{aligned}
& \forall \,\,E'\subset E \ \ \text{with}\ \ 0<r(E')< r, \ \text{we have} \\& \frac{par\chi(E')-dim(Q^{E'})}{r(E')}<\frac{par\chi(E)-dim(Q)}{r(E)}
\end{aligned}\right\}.$$
\end{itemize}
\end{prop}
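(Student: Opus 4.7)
\smallskip

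\noindent\textbf{Proof proposal.} The plan is to prove each codimension bound by stratifying the ``bad'' locus in $(\wt\sR^{ss}_{\omega})^L$ (or $\sH^L$) according to the discrete invariants of a destabilizing subsheaf $E'\subset E$, and then estimating the dimension of each stratum via a Quot/flag/Grassmannian count. The sharp codimension $(r-1)g+|I|/k$ should arise as the minimum, over all admissible destabilizing types, of the difference between $\dim\sH^L$ and the dimension of the corresponding stratum.

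For part (1), a point in $\sH^L\setminus(\wt\sR^{ss}_{\omega})^L$ admits an $E'\subset E$ with $E/E'$ torsion free off $\{x_1,x_2\}$ that violates the GPS semistability inequality. I would fix the discrete data $(r',\chi',\{\vec n^{E'}(x)\}_{x\in I},\dim Q^{E'})$ and exhibit the stratum as a fibration over: the Quot-type moduli of such $E'$ inside the universal sheaf (with fixed determinant), the corresponding Quot for $E/E'$, an $\mathrm{Ext}^1$ piece governing extensions, the induced flag structures at each $x\in I$, and the Grassmannian parametrizing $Q^{E'}\subset Q$. Riemann--Roch gives the expected dimensions of the Quot pieces, and comparing with $\dim\sH^L$ (which imposes one global condition on $\det$) yields the bound, with strict inequality coming from the strict failure of semistability (the worst violator is bounded away from equality by at least $1/k$).

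For part (2), the analysis is almost identical but now takes place on $(\wt\sR^{ss}_{\omega})^L\setminus(\sD_1^f\cup\sD_2^f)$, where $E$ is locally free at $\{x_1,x_2\}$ and each $E_{x_i}\to Q$ is an isomorphism; this identifies $Q$ with $E_{x_1}\cong E_{x_2}$, removes one source of degeneracy, and leaves only strata on which the GPS inequality is attained with equality. The same dimension count then gives codimension $\ge(r-1)g+|I|/k$. Part (3) is a refinement: $W_{\omega}$ excludes the strata for \emph{any} $E'$ (not requiring $E/E'$ torsion free outside $\{x_1,x_2\}$), so one also stratifies by subsheaves with arbitrary torsion quotient; the torsion contribution is controlled by the Gorenstein condition at $\{x_1,x_2\}$ built into $\sH$ via Notation \ref{nota3.10}, and does not worsen the codimension bound.

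The main obstacle is the combinatorial bookkeeping of the parabolic and GPS contributions: the parabolic weight term $\frac{1}{k}\sum a_i(x)n^{E'}_i(x)$ and the Grassmannian dimension contribution from $Q^{E'}\subset Q$ must both be tracked through the dimension count to produce exactly $(r-1)g+|I|/k$, and one has to verify that imposing $\det=L$ drops both $\dim\sH$ and each stratum dimension by the same amount $g$, so that the relative codimension is unchanged. All ingredients appear in the arguments of Proposition 5.2 of \cite{Su1}; passing from the absolute case there to the fixed-determinant case asserted here should be a matter of routine adjustment of these dimension counts.
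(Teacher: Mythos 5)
Your proposal follows essentially the same route as the paper: the paper's own proof is simply a citation of Proposition 5.2 of \cite{Su1} (noting that the $\frac{|I|}{k}$ term was omitted there and that the proof of part (2) there yields statement (3)), and your stratification-by-destabilizing-subsheaf dimension count, together with the remark that fixing the determinant drops source and strata dimensions equally, is exactly the content of that cited argument adapted to the fixed-determinant case.
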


\begin{proof} The statements (1) and (2) are contained in Proposition 5.2 of \cite{Su1} (where the term $\frac{|I|}{k}$ was omitted). The proof of Proposition 5.2 (2) in \cite{Su1} implies statement (3) here.

\end{proof}

\begin{prop}\label{prop3.13} Let $\omega_c=(2r, \{\vec n(x),\,\,\vec a_c(x)\}_{x\in I})$ be the data in Proposition \ref{prop3.6}
and $\Theta_{J^d_C}$ be the theta line bundle on $J^d_C$. Assume
\ga{3.5} {(r-1)(g-1)+\frac{|I|}{2r}\ge 2.}
Then there is an ample line bundle $\Theta_{\sP_{\omega_c}}$ on $\sP_{\omega_c}$ such that
$$\omega^{-1}_{\sP_{\omega_c}}=\Theta_{\sP_{\omega_c}}\otimes {\rm Det}^*(\Theta_{J^d_C}^{-1}).$$
In particular, for any $L\in J^d_C$, $\sP^L_{\omega_c}$ is a normal Fano variety with only rational singularities.
\end{prop}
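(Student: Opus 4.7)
The plan is to adapt the proof of Proposition~\ref{prop3.6} to the GPS setting, where the new ingredient compared to the parabolic bundle case is the Grassmannian factor ${\rm Grass}_r(\sF_{x_1}\oplus\sF_{x_2})$ over the Quot scheme. The strategy has three steps: (i) compute the anti-canonical bundle of the normal Gorenstein parameter space $\sH^L$; (ii) identify it, up to pullback from $J^d_C$, with the polarization associated to $\omega_c$; and (iii) descend across the GIT quotient $\psi$ using the codimension estimate of Proposition~\ref{prop3.12}(1) and the theorem of F.~Knop \cite{Kn}.

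For step (i), since $\sH$ is normal Gorenstein with rational singularities (Proposition~3.2 and Remark~3.1 of \cite{Su1}), the dualizing sheaf $\omega_{\sH^L}$ is invertible. On the smooth open subscheme $\wt\sR_F^L\subset\sH^L$ of locally free classes, the projection $\widetilde\sR\to\sR$ restricts to a ${\rm Gr}(r,2r)$-bundle $\rho:\wt\sR_F^L\to \sR_F^L$, whose relative anti-canonical bundle is a power of the determinant of the universal rank-$r$ quotient of $\sF_{x_1}\oplus\sF_{x_2}$ and contributes exactly the weight $k=2r$ at $\{x_1,x_2\}$ to the polarization. Combining this with the formula for $\omega^{-1}_{\sR_F^L}$ from Proposition~2.2 of \cite{Su1} (or its tidier reformulation in Proposition~4.2 of \cite{Su3}), one checks that $\omega^{-1}_{\wt\sR_F^L}$ equals the restriction of the polarization attached to $\omega_c$, twisted by ${\rm Det}^*(\Theta^{-1}_{J^d_C})$. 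The gap condition $\bar a_{i+1}(x)-\bar a_i(x)=n_i(x)+n_{i+1}(x)$ is precisely what the relative canonical bundles of the flag bundles at points of $I$ produce, while $k=2r$ encodes Serre duality on ${\rm Gr}(r,2r)$. Normality of $\sH^L$ and ${\rm Codim}(\sH^L\setminus\wt\sR_F^L)\ge 2$ extend the identification to all of $\sH^L$.

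For steps (ii) and (iii), under hypothesis~\eqref{3.5} with $k=2r$, Proposition~\ref{prop3.12}(1) gives that ${\rm Codim}(\sH^L\setminus(\wt\sR^{ss}_{\omega_c})^L)$ exceeds $(r-1)g+|I|/(2r) = (r-1)(g-1)+(r-1)+|I|/(2r)\ge 2$. Therefore the ${\rm SL}(V)$-linearized invertible sheaf $\omega^{-1}_{\sH^L}$, restricted to $(\wt\sR^{ss}_{\omega_c})^L$, descends under $\psi$ to a line bundle on $\sP^L_{\omega_c}$ which is ample by construction of the GIT polarization; by Knop's theorem this descent coincides with $\omega^{-1}_{\sP^L_{\omega_c}}$. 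Carrying out the same descent in families over $J^d_C$, using flatness of ${\rm Det}$ from \eqref{3.4}, yields the global identity $\omega^{-1}_{\sP_{\omega_c}}=\Theta_{\sP_{\omega_c}}\otimes{\rm Det}^*(\Theta^{-1}_{J^d_C})$ with $\Theta_{\sP_{\omega_c}}$ ample.

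The final assertion then follows: for any $L\in J^d_C$ the pullback ${\rm Det}^*(\Theta^{-1}_{J^d_C})$ is trivial on the fiber $\sP^L_{\omega_c}$, so $\omega^{-1}_{\sP^L_{\omega_c}}=\Theta_{\sP_{\omega_c}}|_{\sP^L_{\omega_c}}$ is ample, making $\sP^L_{\omega_c}$ a Fano variety; normality and rational singularities are inherited from the corresponding properties of $\sH^L$ via the GIT quotient by the reductive group ${\rm SL}(V)$. The main obstacle will be the bookkeeping in step (i): carefully matching the contributions to $\omega^{-1}_{\sH^L}$ from the canonical bundle of the Quot-scheme part, the flag bundles over $I$, and the Grassmannian at $\{x_1,x_2\}$, so that the weights assemble precisely into the data $\omega_c$ and the residual Jacobian twist is exactly ${\rm Det}^*(\Theta^{-1}_{J^d_C})$.
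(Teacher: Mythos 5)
Your overall strategy coincides with the paper's: express $\omega^{-1}$ of the parameter space as (polarization attached to $\omega_c$) $\otimes\,{\rm Det}^*(\Theta_{J^d_C}^{-1})$, descend the polarization to an ample bundle, and identify the descent of $\omega^{-1}$ with $\omega^{-1}_{\sP_{\omega_c}}$ via Knop's theorem using the codimension estimates. The paper does exactly this, quoting the formula for $\omega^{-1}_{\sH}$ from Proposition 3.4 of \cite{Su1} and the descent/identification from Lemma 2.3 and Lemma 5.6 of \cite{Su1}.

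However, there is a genuine gap in your step (i). You compute $\omega^{-1}$ only on the locally free locus $\wt\sR_F^L$ (as a ${\rm Gr}(r,2r)$-bundle over $\sR_F^L$) and then claim ${\rm Codim}(\sH^L\setminus\wt\sR_F^L)\ge 2$ to extend the identity to $\sH^L$. That codimension claim is false: the locus of GPS $(E,Q)$ with $E$ not locally free at $x_1$ or $x_2$ is the union of the subvarieties $\hat\sD_1^t,\hat\sD_2^t$ of Proposition \ref{prop3.12}, and these are \emph{divisors} in $\wt\sR$ (indeed their images $\sD_j=\psi(\hat\sD_j^t)$ are precisely the factorization divisors of $\sP_{\omega}$, as used in the proof of Proposition \ref{prop3.20}). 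Two line bundles on a normal variety that agree off a divisor need not agree, so your argument only pins down $\omega^{-1}_{\sH^L}$ up to unknown twists by $\sO(\hat\sD_j^t)$, and these would propagate into the descended formula for $\omega^{-1}_{\sP_{\omega_c}}$. To close the gap you must compute $\omega_{\sH}$ on all of $\sH$, including along the non-locally-free boundary; this is exactly what Proposition 3.4 of \cite{Su1} supplies and why the paper cites it rather than arguing on $\wt\sR_F$. A secondary, smaller point: for the identification $(\psi_*\omega^{-1}_{\wt\sR^{ss}_{\omega_c}})^{inv.}=\omega^{-1}_{\sP_{\omega_c}}$, Knop's criterion needs not only the bound of Proposition \ref{prop3.12}(1) on ${\rm Codim}(\sH^L\setminus(\wt\sR^{ss}_{\omega_c})^L)$ but also control of the locus where the group does not act freely and of the images of boundary divisors; the paper invokes the bounds of Proposition \ref{prop3.5} and Proposition \ref{prop3.12}(2) together through Lemma 5.6 of \cite{Su1}, whereas you cite only part (1).
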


\begin{proof} Let
$V\otimes\sO_{C\times\sH}(-N)\to \sE\to 0$, $\,\,\sE_{x_1}\oplus\sE_{x_2}\to \sQ\to 0$ and
$$\{\,\,\sE_{\{x\}\times \sH}=\sQ_{\{x\}\times \sH,\,l_x+1}\twoheadrightarrow\sQ_{\{x\}\times \sH,\,l_x}
\twoheadrightarrow \cdots\twoheadrightarrow \sQ_{\{x\}\times\,\sH,1}
\twoheadrightarrow0\,\,\}_{x\in I}$$ be the universal quotients and universal flags. Let $\omega_{C}=\sO(\sum_qq)$ and
$$\Theta_{J^d_C}=(detR\pi_{J^d_{C}}\sL)^{-2}\otimes\sL_{x_1}^r\otimes\sL_{x_2}^r\otimes\sL_y^{2\chi-2r}\otimes\bigotimes_q\sL_q^{r-1}$$
where $\sL$ is the universal line bundle on $C\times
J^d_{C}$. Then we have
$$\aligned&\omega^{-1}_{\sH}=(det\,R\pi_{\sH}\sE)^{-2r}\otimes\\&
\bigotimes_{x\in I}\left\{(det\,\sE_x)^{n_{l_x+1}-r}
\otimes\bigotimes^{l_x}_{i=1}(det\,\sQ_{x,i})
^{n_i(x)+n_{i+1}(x)}\right\}\otimes(det\,\sQ)^{2r}\\&
\otimes(det\,\sE_y)^{2\chi-2r}\otimes{\rm Det}_{\sH}^*(\Theta_{J^d_{C}}^{-1}):=\hat\Theta_{\omega_c}\otimes {\rm Det}_{\sH}^*(\Theta_{J^d_{C}}^{-1})
\endaligned$$ by Proposition 3.4 of \cite{Su1}, and $\hat\Theta_{\omega_c}$ descends to an ample
line bundle $\Theta_{\sP_{\omega_c}}$ on $\sP_{\omega_c}$ (see Lemma 2.3 of \cite{Su1}). Thus
$$(\psi_*\omega_{\wt{\sR}^{ss}_{\omega_c}}^{-1})^{inv.}=\Theta_{\sP_{\omega_c}}\otimes{\rm Det}^*(\Theta_{J^d_C}^{-1}).$$
When condition \eqref{3.5} holds, the lower bounds in Proposition \ref{prop3.5} and Proposition \ref{prop3.12} are at least two. Thus
Lemma 5.6 of \cite{Su1} is applicable (where assumption $g\ge 2$ in Lemma 5.6 of \cite{Su1} is replaced by condition \eqref{3.5})
and $(\psi_*\omega_{\wt{\sR}^{ss}_{\omega_c}}^{-1})^{inv.}=\omega^{-1}_{\sP_{\omega_c}}$.
\end{proof}

\begin{thm}\label{thm3.14} For any data $\omega=(k, \{\vec n(x),\,\,\vec a(x)\}_{x\in I})$, the moduli space
$\sP^L_{\omega}$ is of globally F-regular type.
\end{thm}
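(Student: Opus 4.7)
The plan is to mirror the proof of Theorem 3.7, using the normal open subscheme $\sH^L$ of Notation 3.10 in place of $\sR^L_F$ and invoking Proposition 2.10 with the Fano moduli space furnished by Proposition 3.13 as the reference globally F-regular type variety.

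Choose a finite subset $I' \subset C \setminus (I \cup \{x_1, x_2\})$ with
\[
(r-1)(g-1) + \frac{|I|+|I'|}{2r} \ge 2,
\]
so that the hypothesis \eqref{3.5} of Proposition 3.13 holds for the augmented data $\omega_c = (2r, \{\vec n(x), \vec a_c(x)\}_{x \in I \cup I'})$. Form the flag bundle
\[
\wt\sR' = \wt\sR \times_{\mathbf{Q}} \Bigl(\underset{x \in I'}{\times_{\mathbf{Q}}} Flag_{\vec n(x)}(\sF_x)\Bigr) \xrightarrow{\hat f} \wt\sR,
\]
and let $\hat Y \subset \wt\sR$ be the Zariski closure of $\sH^L$, $\hat Z := \hat f^{-1}(\hat Y) \subset \wt\sR'$, and $\sH'^L := \hat f^{-1}(\sH^L)$. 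Because $\sH$ is normal (Notation 3.10), both $\sH^L$ and its flag bundle $\sH'^L$ are ${\rm SL}(V)$-invariant normal open subschemes of $\hat Y$ and $\hat Z$; the inclusion $\wt\sR^{ss}_\omega \subset \sH$ together with openness of the semistable locus forces $\hat Y^{ss}_\omega = (\wt\sR^{ss}_\omega)^L \subset \sH^L$ and $\hat Z^{ss}_{\omega_c} = (\wt\sR'^{ss}_{\omega_c})^L \subset \sH'^L$. The projection $\hat f \colon \sH'^L \to \sH^L$ is a flag bundle, hence $p$-compatible with $\hat f_*\sO_{\sH'^L} = \sO_{\sH^L}$. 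This verifies conditions (1) and (2) of Proposition 2.10.

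For condition (3), I would set $W := W_{\omega_c} \subset (\wt\sR'^{ss}_{\omega_c})^L$, the open subset from Proposition 3.12 (3), and $\hat X := W \cap \hat f^{-1}((\wt\sR^{ss}_\omega)^L)$. The strict inequality defining $W_{\omega_c}$ is imposed on \emph{every} nontrivial proper subsheaf $E' \subset E$ (with no torsion-free condition on $E/E'$), a strictly stronger requirement than GPS-stability; hence every point of $W_{\omega_c}$ is stable with closed ${\rm SL}(V)$-orbit, and the $G$-invariance of $\hat X$ automatically yields $\hat X = \varphi^{-1}\varphi(\hat X)$. The required codimension estimate
\[
{\rm Codim}(\sH'^L \setminus W) \ge (r-1)g + \frac{|I|+|I'|}{2r} \ge 2
\]
follows by combining parts (1) and (3) of Proposition 3.12 applied to $\omega_c$ on $\wt\sR'$.

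Finally, $Z := \hat Z^{ss}_{\omega_c} // {\rm SL}(V) = \sP^L_{\omega_c}$ is a normal Fano variety with only rational singularities by Proposition 3.13, so it is of globally F-regular type by Proposition 2.6. Proposition 2.10 then delivers globally F-regular type for $Y = \sP^L_\omega$. The delicate point is the tension between the codimension estimate and the saturation condition $\hat X = \varphi^{-1}\varphi(\hat X)$: because GPS-stability only constrains subsheaves with $E/E'$ torsion-free outside $\{x_1, x_2\}$, the stable locus is a priori not visibly saturated along $\sD_1^f \cup \sD_2^f$, and it is precisely the strengthened open set $W_{\omega_c}$ of Proposition 3.12 (3) — stable in the stronger sense — that makes both conditions simultaneously available.
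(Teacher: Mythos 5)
Your setup coincides with the paper's: the same choice of $I'$, the same flag bundle $\wt\sR'\xrightarrow{\hat f}\wt\sR$ restricted to $(\sH')^L\to\sH^L$, the same $W=W_{\omega_c}$ and $\hat X=W\cap\hat f^{-1}((\wt\sR^{ss}_{\omega})^L)$, the same codimension estimate from Proposition \ref{prop3.12}, and the same appeal to Propositions \ref{prop3.13}, \ref{prop2.6} and \ref{prop2.10}. But your verification of condition (3) of Proposition \ref{prop2.10} --- the saturation $\hat X=\varphi^{-1}\varphi(\hat X)$ --- contains a genuine gap. You assert that the strict inequality defining $W_{\omega_c}$ is imposed on \emph{every} nontrivial proper subsheaf, hence that every point of $W_{\omega_c}$ is stable with closed orbit, making saturation automatic. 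This misreads the definition: in Proposition \ref{prop3.12}(3) the strict inequality is required only for subsheaves $E'$ with $0<r(E')<r$. Subsheaves of rank $0$ are exempt, and for a GPS $(E,Q)\in\sH$ with nonzero torsion supported at $\{x_1,x_2\}$ the torsion subsheaf realizes \emph{equality} (by condition (1) of Notation \ref{nota3.10}), so such a point is strictly semistable yet still lies in $W_{\omega_c}$. Indeed $W_{\omega_c}$ necessarily meets the divisors $\sD_1^f\cup\sD_2^f$ (its complement has codimension $\ge 2$), and there the associated graded $gr(E,Q)=(\wt E,\wt Q)\oplus(\,_{x_1}\tau_1\oplus\,_{x_2}\tau_2,\tau_1\oplus\tau_2)$ is genuinely different from $(E,Q)$. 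So $W_{\omega_c}$ is \emph{not} contained in the stable locus and saturation is not automatic.

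What is missing is precisely the substantive step of the paper's proof: establishing the equivalence \eqref{3.7}, namely that $(E,Q)\in\hat X$ if and only if $gr(E,Q)\in\hat X$. The paper does this by analyzing the two possible shapes of the Jordan--H\"older filtration of a point of $W_{\omega_c}$ (either a torsion sub-GPS with torsion-free stable quotient $(\wt E,\wt Q)$, or a torsion quotient $(\,_{x_i}\mathbb{C},\mathbb{C})$) and checking in each case that $\omega$-semistability of $(E,Q)$ is equivalent to $\omega$-semistability of the torsion-free part $(\wt E,\wt Q)$. Without this argument, the hypothesis $\hat X=\varphi^{-1}\varphi(\hat X)$ of Proposition \ref{prop2.10} has not been verified, and the proof does not go through. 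Your closing remark correctly senses that the interaction between $W_{\omega_c}$ and the loci $\sD_1^f\cup\sD_2^f$ is the delicate point, but the resolution is not that $W_{\omega_c}$ is stable --- it is not --- but that $\hat X$ is a union of $S$-equivalence classes, which must be proved directly.
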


\begin{proof} Choose a finite subset $I'\subset C\setminus I$ satisfying \eqref{3.5}. Recall that
$$\sR=\underset{x\in I}{\times_{\bold Q}}Flag_{\vec n(x)}(\sF_x),\quad \sR'=\underset{x\in I\cup I'}{\times_{\mathbf{Q}}}
Flag_{\vec n(x)}(\sF_x)\xrightarrow{\hat f} \sR$$ be the projection and
$\widetilde{\sR}=Grass_r(\sF_{x_1}\oplus\sF_{x_2})\times_{\bold Q}\sR\xrightarrow{\rho} \sR$. Let
\ga{3.6} {\wt\sR':=Grass_r(\sF_{x_1}\oplus\sF_{x_2})\times_{\bold Q}\sR'\xrightarrow{\hat f} \wt\sR.}
Then, on $\sH^L\subset \wt\sR$,  it is clear that $(\sH')^L:={\hat f}^{-1}(\sH^L)\xrightarrow{\hat f} \sH^L$ is a ${\rm SL}(V)$-invariant and
$p$-compatible morphism such that $\hat f_*\sO_{(\sH')^L}=\sO_{\sH^L}$.

For any data $\omega=(k, \{\vec n(x),\,\,\vec a(x)\}_{x\in I})$, $\omega_c=(2r, \{\vec n(x),\,\,\vec a_c(x)\}_{x\in I\cup I'})$, we have
$(\wt\sR_{\omega}^{ss})^L\subset\sH^L$, $\,(\wt\sR_{\omega_c}^{\prime\,ss})^L\subset (\sH')^L$. Recall
$$(\wt\sR_{\omega}^{ss})^L\xrightarrow{\psi}\sP^L_{\omega}:=Y,\quad (\wt\sR_{\omega_c}^{\prime\,ss})^L\xrightarrow{\varphi}\sP^L_{\omega_c}:=Z.$$
To apply Proposition \ref{prop2.10}, let $W=W_{\omega_c}\subset (\wt\sR_{\omega_c}^{\prime\,ss})^L$ and
$$\hat X=W\cap\hat f^{-1}((\wt\sR_{\omega}^{ss})^L).$$
By Proposition \ref{prop3.12},  ${\rm Codim}((\sH')^L\setminus W)\ge (r-1)g+\frac{|I|+|I'|}{2r}\ge 2$. Thus it is enough to
check the condition that $\hat X=\varphi^{-1}\varphi(\hat X)$. This is equivalent (see Remark 1.2 of \cite{Su1}) to prove that
\ga{3.7}{\forall\,\,(E, Q)\in (\wt\sR_{\omega_c}^{\prime\,ss})^L,\quad (E,Q)\in \hat X\,\,\,\Leftrightarrow\,\,\, gr(E,Q)\in \hat X.}
In fact, for any $(E, Q)\in (\wt\sR_{\omega_c}^{\prime\,ss})^L$, it is clear that we have
$$(E, Q)\in W \,\,\,\Leftrightarrow\,\,\,gr(E,Q)=(\wt E, \wt Q)\oplus (\,_{x_1}\tau_1\oplus\,_{x_2}\tau_2, \tau_1\oplus\tau_2)\in W$$
where $(\wt E,\wt Q)$ is a stable GPS (see Definition 1.5 of \cite{Su1}). Thus either
$$0\to (\,_{x_1}\tau_1\oplus\,_{x_2}\tau_2, \tau_1\oplus\tau_2)\to (E,Q)\to (\wt E,\wt Q)\to 0$$
or $0\to (E',Q')\to (E, Q)\to (\,_{x_i}\mathbb{C},\mathbb{C})\to 0$. Then $(E,Q)$ is semi-stable (respect to $\omega$) if and only if
$(\wt E,\wt Q)$ is semi-stable (respect to $\omega$). Thus \eqref{3.7} is proved and we are done.
\end{proof}

When $C=C_1\cup C_2$ is reducible with two smooth irreducible
components $C_1$ and $C_2$ of genus $g_1$ and $g_2$ meeting at only
one point $x_0$ (which is the only node of $C$), we fix an ample
line bundle $\sO(1)$ of degree $c$ on $C$ such that
$deg(\sO(1)|_{C_i})=c_i>0$ ($i=1,2$). For any coherent sheaf $E$,
$P(E,n):=\chi(E(n))$ denotes its Hilbert polynomial, which has degree
$1$. We define the rank of $E$ to be
$$r(E):=\frac{1}{deg(\sO(1))}\cdot \lim \limits_{n\to\infty}\frac{P(E,n)}
{n}.$$ Let $r_i$ denote the rank of the restriction of $E$ to $C_i$
($i=1,2$), then
$$P(E,n)=(c_1r_1+c_2r_2)n+\chi(E),\quad r(E)=
\frac{c_1}{c_1+c_2}r_1+\frac{c_2}{c_1+c_2}r_2.$$ We say that $E$ is
of rank $r$ on $X$ if $r_1=r_2=r$, otherwise it will be said of rank
$(r_1,r_2)$.

Fix a finite set $I=I_1\cup I_2$ of smooth points on $C$, where
$I_i=\{x\in I\,|\,x\in C_i\}$ ($i=1,2$), and parabolic data $\omega=\{k,\vec
n(x),\vec a(x)\}_{x\in I}$ with
$$\ell:=\frac{k\chi-\sum_{x\in I}\sum^{l_x}_{i=1}d_i(x)r_i(x)}{r}$$
(recall $d_i(x)=a_{i+1}(x)-a_i(x)$, $r_i(x)=n_1(x)+\cdots+n_i(x)$). Let
\ga{3.8}{n^{\omega}_j=\frac{1}{k}\left(r\frac{c_j}{c_1+c_2}\ell+\sum_{x\in
I_j}\sum^{l_x}_{i=1}d_i(x)r_i(x)\right)\,\,\,(j=1,\,\,2).}

\begin{defn}\label{defn3.15} For any coherent sheaf $F$ of rank $(r_1,r_2)$, let
$$m(F):= \frac{r(F)-r_1}{k}\sum_{x\in I_1}a_{l_x+1}(x)+
\frac{r(F)-r_2}{k}\sum_{x\in I_2}a_{l_x+1}(x),$$ the modified
parabolic Euler characteristic and slop of $F$ are
$${\rm par}\chi_m(F):={\rm par}\chi(F)+m(F),\quad {\rm par}\mu_m(F):=\frac{{\rm par}\chi_m(F)}{r(F)}.$$
A parabolic sheaf $E$ is called semistable (resp. stable) if, for
any subsheaf $F\subset E$ such $E/F$ is torsion free, one has, with
the induced parabolic structure,
$${\rm par}\chi_m(F)\le \frac{{\rm par}\chi_m(E)}{r(E)}r(F)\quad (resp.<).$$
\end{defn}

\begin{thm}[Theorem 1.1 of \cite{Su2} or Theorem 2.14 of \cite{Su3}]\label{thm3.16} There
exists a reduced, seminormal projective scheme
$$\sU_C:=\sU_C(r,d,\sO(1),
\{k,\vec n(x),\vec a(x)\}_{x\in I_1\cup I_2})$$ which is the coarse
moduli space of $s$-equivalence classes of semistable parabolic
sheaves $E$ of rank $r$ and $\chi(E)=\chi=d+r(1-g)$ with parabolic structures
of type $\{\vec n(x)\}_{x\in I}$ and weights $\{\vec a(x)\}_{x\in
I}$ at points $\{x\}_{x\in I}$. The moduli space $\sU_C$ has at most
$r+1$ irreducible components.
\end{thm}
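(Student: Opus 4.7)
The plan is to extend the GIT construction of $\sU_{C,\omega}$ outlined after Theorem \ref{thm3.3} from the irreducible case to the reducible nodal curve $C=C_1\cup C_2$, with the modification term $m(F)$ in Definition \ref{defn3.15} playing the key new role. Concretely: (i) prove boundedness of the semistable family; (ii) build a flag bundle $\sR$ over a Quot scheme $\mathbf{Q}$; (iii) choose a polarization whose Hilbert--Mumford numerical criterion on $\sR$ reproduces the modified semistability inequality; and (iv) stratify by bidegree at the node to bound the number of components.

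First I would establish boundedness of semistable parabolic sheaves $E$ of Hilbert polynomial $P(n)=(c_1r_1+c_2r_2)n+\chi$ with the prescribed parabolic data. The role of $m(F)$ is precisely to preclude unbounded bidegrees: for a subsheaf $F\subset E$ with ranks $(r_1(F),r_2(F))$ and generic rank $r(F)=\tfrac{c_1}{c_1+c_2}r_1(F)+\tfrac{c_2}{c_1+c_2}r_2(F)$ differing from $r_j$, the extra term $m(F)$ adjusts $\mathrm{par}\,\chi(F)$ so that the slope inequality of Definition \ref{defn3.15} controls both $\chi(E|_{C_1})$ and $\chi(E|_{C_2})$. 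Once boundedness is in hand, choose $N\gg 0$ so that every semistable $E(N)$ is globally generated with vanishing $H^1$, take $\mathbf{Q}$ to be the Quot scheme of quotients $V\otimes\sO_C(-N)\twoheadrightarrow F$ with $V=\C^{P(N)}$, and form $\sR=\times_{\mathbf{Q},\,x\in I}\mathrm{Flag}_{\vec n(x)}(\sF_x)$, which is reduced and seminormal as a flag bundle over a seminormal Quot scheme.

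Next I would build an $\mathrm{SL}(V)$-linearized ample line bundle $\Theta_{\sR,\omega}$ refining the polarization from the irreducible case by an additional twist of the form $(\det\sE|_{C_1\times\sR})^{\alpha_1}\otimes(\det\sE|_{C_2\times\sR})^{\alpha_2}$, with weights $\alpha_j$ chosen to encode $m(F)$, e.g.\ proportional to $\sum_{x\in I_j}a_{l_x+1}(x)$. A direct Hilbert--Mumford calculation on a one-parameter subgroup of $\mathrm{SL}(V)$ induced by a filtration $0\subset F\subset E$ then produces the inequality
\[
{\rm par}\chi_m(F)\;\le\;\frac{r(F)}{r(E)}\,{\rm par}\chi_m(E),
\]
so that GIT semistability agrees with parabolic semistability in the sense of Definition \ref{defn3.15}. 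Defining $\sU_C:=\sR^{ss}_\omega/\!/\mathrm{SL}(V)$ yields the desired projective coarse moduli space; reducedness and seminormality transfer from $\sR$ to the GIT quotient.

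The main obstacle is the bound of $r+1$ on the number of irreducible components. I would stratify $\sR^{ss}_\omega$ by the torsion length $\ell=\dim_{\C}(\mathrm{Tor}(F)_{x_0})$ at the node; for a rank-$r$ sheaf torsion-free away from $x_0$ one has $0\le\ell\le r$, yielding at most $r+1$ strata. Each stratum should then be identified, after pulling back to the normalization $\widetilde C=C_1\sqcup C_2$ and choosing generalized parabolic data of the appropriate rank at the two preimages $x_1,x_2$ of $x_0$, with an open subset of a moduli space of generalized parabolic sheaves on $\widetilde C$ of a prescribed bidegree $(d_1,d_2)$; such a GPS moduli space is irreducible by the analogue of Theorem \ref{thm3.9} on $\widetilde C$, so each non-empty stratum contributes at most one irreducible component. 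The delicate point is checking that the polarization $\Theta_{\sR,\omega}$ restricts correctly to each stratum and that the identification with GPS moduli is compatible with semistability — this is where the modification $m(F)$ must be matched carefully against the numerical data of the twist.
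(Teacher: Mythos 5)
This statement is quoted from the literature (Theorem 1.1 of \cite{Su2}, Theorem 2.14 of \cite{Su3}); the paper gives no proof of its own, only the surrounding recollection of the construction (the Quot schemes $\mathbf{Q}_i$, the flag bundles $\sR_i$, the invariants $n_j^{\omega}$ of \eqref{3.8}, and Theorem \ref{thm3.18}). Measured against that, your outline has the right overall shape --- boundedness, a flag bundle over a Quot scheme, a polarization whose Hilbert--Mumford analysis reproduces the modified inequality of Definition \ref{defn3.15} --- but two steps are genuinely wrong or unsupported.

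First, the component count. You stratify by the torsion length $\ell=\dim\mathrm{Tor}(F)_{x_0}$ at the node and claim $0\le\ell\le r$ gives $r+1$ strata, each contributing one component. This is not the right invariant: the local type of a depth-one sheaf at the node of $C_1\cup C_2$ is governed by a triple $(a,b_1,b_2)$ with $a+b_1+b_2=r$, and in any case the non-locally-free locus is a \emph{locally closed stratum inside} a component, not a separate component. The actual source of the bound $r+1$, visible in \eqref{3.8} and Theorem \ref{thm3.18}, is that semistability tested against the subsheaves of rank $(r,0)$ and $(0,r)$ (equivalently $(E_1,\mathrm{Tor}(E_2))$ and $(\mathrm{Tor}(E_1),E_2)$) pins the Euler characteristic $\chi_1=\chi(E_1)$ into the window $n_1^{\omega}\le\chi_1\le n_1^{\omega}+r$, which contains exactly $r+1$ integers; each admissible $\chi_1$ indexes one irreducible piece $\sP_{\chi_1,\chi_2}$ of the GPS moduli space dominating one component of $\sU_C$. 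Your sketch never derives this window, which is the heart of the matter and is exactly where the modification term $m(F)$ earns its keep.

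Second, seminormality. You assert that $\sR$ is ``reduced and seminormal as a flag bundle over a seminormal Quot scheme'' and that these properties transfer to the GIT quotient. The seminormality of the relevant parameter space (or of $\sU_C$) is not free: in \cite{NR} and \cite{Su2} it is a theorem, proved by exhibiting the normal GPS moduli space $\sP_{\omega}$ on $\wt C$ as the normalization of $\sU_C$ and analyzing the gluing of the two divisors $\sD_1$, $\sD_2$ over the non-locally-free locus. Quoting seminormality of the Quot scheme of a nodal curve as an input, rather than establishing it, leaves the statement unproved. The Hilbert--Mumford matching of the twist $(\det\sE|_{C_i\times\sR})^{\alpha_i}$ against $m(F)$ is plausible and consistent with the polarization data recalled in the paper, but these two gaps mean the argument as written does not yet yield the theorem.
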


The normalization of $\sU_C$ is a moduli space of semistable GPS on $\wt C=C_1\bigsqcup C_2$ with parabolic structures at points $x\in I$. Recall

\begin{defn}\label{defn3.17} A GPS $(E,E_{x_1}\oplus E_{x_2}\xrightarrow{q}Q)$ is called semistable (resp.,
stable), if for every nontrivial subsheaf $E'\subset E$ such that
$E/E'$ is torsion free outside $\{x_1,x_2\},$ we have, with the
induced parabolic structures at points $\{x\}_{x\in I}$,
$$par\chi_m(E')-dim(Q^{E'})\leq
r(E')\cdot\frac{par\chi_m(E)-dim(Q)}{r(E)} \,\quad (\text{resp.,
$<$}),$$ where $Q^{E'}=q(E'_{x_1}\oplus E'_{x_2})\subset Q.$
\end{defn}

\begin{thm}[Theorem 2.1 of \cite{Su2} or Theorem 2.26 of \cite{Su3}]\label{thm3.18} For any data $\omega=(\{k,\vec n(x),\,\,\vec a(x)\}_{x\in I_1\cup I_2},\sO(1))$,
the coarse  moduli space $\sP_{\omega}$ of $s$-equivalence classes of semi-stable GPS on $\wt C$ with parabolic structures
at the points of $I$ given by the data $\omega$ is a disjoint union of at most $r+1$ irreducible, normal projective varieties $\sP_{\chi_1,\chi_2}$
( $\chi_1+\chi_2=\chi+r$, $n_j^{\omega}\le\chi_j\le n_j^{\omega}+r$) with at most rational singularities.
\end{thm}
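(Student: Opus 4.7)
The plan is to realize $\sP_{\omega}$ as a disjoint union of GIT quotients of smooth parameter spaces, one for each admissible pair $(\chi_1,\chi_2)$ with $\chi_1+\chi_2=\chi+r$, and to deduce normality and rational singularities from the combination of smoothness of the parameter space and Boutot's theorem on quotients by reductive groups in characteristic zero.

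First, I would fix $(\chi_1,\chi_2)$ and build a parameter space for GPS on $\wt C=C_1\sqcup C_2$ having these partial Euler characteristics. Taking $V_i=\C^{P_i(N)}$ with $P_i(m)=c_irm+\chi_i$, let $\mathbf Q_i$ be the Quot scheme on $C_i$ of quotients $V_i\otimes\sO_{C_i}(-N)\twoheadrightarrow F_i$ of Hilbert polynomial $P_i$, and form the relative flag bundle $\sR_i\to\mathbf Q_i$ for the parabolic type $\{\vec n(x)\}_{x\in I_i}$. Setting
$$\wt\sR_{\chi_1,\chi_2}=Grass_r(\sF_{1,x_1}\oplus\sF_{2,x_2})\times_{\mathbf Q_1\times\mathbf Q_2}(\sR_1\times\sR_2),$$
one obtains a parameter space with an action of $G=\mathrm{SL}(V_1)\times\mathrm{SL}(V_2)$. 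The data $\omega$ together with $(\chi_1,\chi_2)$ determines a polarization assembled from the determinant of cohomology, the determinants of the flag quotients at the points of $I$, and the determinant of $Q$, in such a way that the GIT semistable locus coincides with the locus of semistable GPS in the sense of Definition \ref{defn3.17}. The component of $\sP_\omega$ with these discrete invariants is then the GIT quotient
$$\sP_{\chi_1,\chi_2}:=\wt\sR^{ss}_{\chi_1,\chi_2}//G,$$
and $\sP_\omega=\bigsqcup_{(\chi_1,\chi_2)}\sP_{\chi_1,\chi_2}$.

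Next, I would restrict to an open $G$-invariant subscheme $\sH_{\chi_1,\chi_2}\subset\wt\sR_{\chi_1,\chi_2}$ analogous to the $\sH$ of Notation \ref{nota3.10}: those GPS whose torsion is supported on $\{x_1,x_2\}$ and injects into $Q$, and whose components satisfy the analogous cohomological vanishing for $N\gg 0$. Since Quot schemes of locally free quotients on smooth curves are smooth and both Grassmannian and flag bundles are smooth morphisms, $\sH_{\chi_1,\chi_2}$ is smooth; it contains $\wt\sR^{ss}_{\chi_1,\chi_2}$ for every $\omega$, and a standard deformation argument (as in Section~3 of \cite{Su1}) shows it is connected. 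Smoothness of the semistable locus together with Boutot's theorem yields that $\sP_{\chi_1,\chi_2}$ is normal with rational singularities; projectivity is automatic from GIT, and irreducibility follows from surjectivity of the quotient map $\wt\sR^{ss}_{\chi_1,\chi_2}\to\sP_{\chi_1,\chi_2}$ together with the smoothness and connectedness of its source.

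Finally, the bounds $n_j^\omega\le\chi_j\le n_j^\omega+r$ must be extracted from semistability. Testing Definition \ref{defn3.17} against the subsheaf $E|_{C_1}\hookrightarrow E$ (extension by zero across the node, which is torsion free outside $\{x_1,x_2\}$) and against its complementary quotient, and expanding the modified Euler characteristic using \eqref{3.8}, yields $n_1^\omega\le\chi_1\le n_1^\omega+r$, and analogously for $\chi_2$. Since $\chi_2=\chi+r-\chi_1$ is determined by $\chi_1$, at most $r+1$ integer values are admissible, so $\sP_\omega$ has at most $r+1$ components. The hard part I anticipate is the precise match between the chosen GIT polarization on $\wt\sR_{\chi_1,\chi_2}$ and the modified parabolic slope of Definitions \ref{defn3.15}--\ref{defn3.17}: the reducible geometry of $C$ forces the extra term $m(F)$ in the modified Euler characteristic, which must be reproduced exactly by the $G$-weights of the various determinant factors defining the polarization, and it is this delicate weight computation that is encoded in the formulas \eqref{3.8}.
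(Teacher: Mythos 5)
The paper does not prove Theorem \ref{thm3.18}; it quotes it from \cite{Su2} and \cite{Su3} and then recalls the parameter--space construction, so your proposal has to be measured against that construction and the cited proof. Your outline follows it in broad strokes, but it contains one genuine gap: the claim that $\sH_{\chi_1,\chi_2}$ is \emph{smooth} because Quot schemes on smooth curves are smooth and ``both Grassmannian and flag bundles are smooth morphisms.'' The Grassmannian factor here is $Grass_r(\sE_{x_1}\oplus\sE_{x_2})$, the relative Grassmannian of $r$-dimensional quotients of the sheaf $\sE_{x_1}\oplus\sE_{x_2}$ on $\sR_1\times\sR_2$, and this sheaf is \emph{not} locally free: its fibre dimension jumps exactly over the locus where $E$ has torsion at $x_1$ or $x_2$. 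Over that locus the Grassmannian of a coherent sheaf is singular, and the semistable locus $\wt\sR^{ss}_{\omega}$ necessarily meets it (GPS with torsion are needed for properness of the quotient), so you cannot argue on a smooth parameter space. The actual input, stated in the paper right after Notation \ref{nota3.10} and proved as Proposition 3.2 of \cite{Su1}, is that $\sH$ is reduced, normal, Gorenstein with at most rational singularities; this is a nontrivial local analysis of determinantal-type singularities, not a consequence of smoothness of Quot and flag schemes. Once that is in hand, Boutot's theorem applies to a normal variety with rational singularities (smoothness is not needed) and your conclusion for $\sP_{\chi_1,\chi_2}$ goes through; likewise irreducibility should be deduced from irreducibility of $\sH_{\chi_1,\chi_2}$ (torsion sheaves degenerate from locally free ones), not from smoothness plus connectedness.

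A secondary but real discrepancy: the group acting is $G=({\rm GL}(V_1)\times {\rm GL}(V_2))\cap {\rm SL}(V_1\oplus V_2)$, not ${\rm SL}(V_1)\times{\rm SL}(V_2)$. The extra central one-parameter subgroup acts trivially on $\sR_1\times\sR_2$ but nontrivially on $Grass_r(\sE_{x_1}\oplus\sE_{x_2})$ through the linearization, and it is precisely the Hilbert--Mumford analysis for this torus that makes GIT semistability match Definition \ref{defn3.17} with its modified Euler characteristic and, together with the sheaf-theoretic test against $E|_{C_1}$ and $E|_{C_2}$ that you correctly propose, pins down the range $n_j^{\omega}\le\chi_j\le n_j^{\omega}+r$. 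With ${\rm SL}(V_1)\times{\rm SL}(V_2)$ you would be solving a different GIT problem and would not recover the stated stability condition.
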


For fixed $\chi_1$, $\chi_2$ satisfying $\chi_1+\chi_2=\chi+r$ and $n_j^{\omega}\le\chi_j\le n_j^{\omega}+r$ ($j=1,\,2$), recall the construction
of $\sP_{\omega}=\sP_{\chi_1,\chi_2}$. Let
$$P_i(m)=c_irm+\chi_i,\quad \sW_i=\sO_{C_i}(-N),\quad V_i=\Bbb
C^{P_i(N)}$$ where
$\sO_{C_i}(1)=\sO (1)|_{C_i}$ has degree $c_i$. Consider the Quot schemes $\textbf{Q}_i=Quot(V_i\otimes\sW_i,
P_i)$, the universal quotient
$V_i\otimes\sW_i\to \sF^i\to 0$ on $C_i\times \textbf{Q}_i$ and
the relative flag scheme
$$\sR_i=\underset{x\in I_i}{\times_{\textbf{Q}_i}}
Flag_{\vec n(x)}(\sF^i_x)\to \textbf{Q}_i.$$ Let
$\sF=\sF^1\oplus\sF^2$ denote direct sum of pullbacks of $\sF^1$,
$\sF^2$ on $$\wt C\times
(\textbf{Q}_1\times\textbf{Q}_2)=(C_1\times\textbf{Q}_1)\sqcup(C_2\times\textbf{Q}_2).$$
Let $\sE$ be the pullback of $\sF$ to $\wt
C\times(\sR_1\times\sR_2)$, and
$$\rho:\widetilde{\sR}=Grass_r(\sE_{x_1}\oplus\sE_{x_2})\to\sR=\sR_1\times\sR_2\to
\textbf{Q}=\textbf{Q}_1\times\textbf{Q}_2.$$
For the given $\omega=(\{k,\vec n(x),\,\vec a(x)\}_{x\in I_1\cup I_2},\mathcal{O}(1))$, let
$\wt{\sR}_{\omega}^{ss}$ (resp. $\wt{\sR}_{\omega}^{s}$) denote the open set of GIT semi-stable (resp. GIT stable) points under
action of $G=({\rm GL}(V_1)\times {\rm GL}(V_2))\cap {SL}(V_1\oplus V_2)$ on $\wt\sR$ respect to the polarization determined by $\omega$.
Let $\sH\subset\wt{\sR}$ be the open set defined in Notation \ref{nota3.10}, then for any data $\omega$ we have
$$\wt{\sR}_{\omega}^{s}\subset\wt{\sR}_{\omega}^{ss}\subset\sH.$$
The moduli space in Theorem \ref{thm3.18} is nothing but the GIT quotient
$$\psi:\wt{\sR}_{\omega}^{ss}\to \sP_{\omega}:=\wt{\sR}_{\omega}^{ss}//G.$$

There exists a morphism $\hat{\rm Det}_{\sH}: \sH\to J^d_{\wt C}=J^{d_1}_{C_1}\times J^{d_2}_{C_2}$, which extends
$$\hat{\rm Det}_{\sH_F}: \sH_F\to J^{d_1}_{C_1}\times J^{d_2}_{C_2},\quad (E,Q)\mapsto ({\rm det}(E|_{C_1}), {\rm det}(E|_{C_2}))$$
on the open set $\sH_F\subset \sH$ of GPB (i.e. GPS $(E,Q)$ with $E$ locally free) and induces a flat determinant morphism
$${\rm Det}_{\sP_{\omega}}:\sP_{\omega}\to J^d_{\wt C}=J^{d_1}_{C_1}\times J^{d_2}_{C_2}$$
(see page 46 of \cite{Su3} for detail). In fact, for any $L\in J^d_{\wt C}=J^{d_1}_{C_1}\times J^{d_2}_{C_2}$, let
\ga{3.9} {\sP_{\omega}^L:={\rm Det}_{\sP_{\omega}}^{-1}(L)\subset \sP_{\omega}}
and note that abelian variety $J^0_C=J^0_{C_1}\times J^0_{C_2}$ acts on $\sP_{\omega}$, the induced morphism
$\sP^L_{\omega}\times J^0_X\to \sP_{\omega}$ is a finite cover (see the proof of Lemma 6.6 in \cite{Su3}). Similarly, let $\sH^L=\hat{\rm Det}_{\sH}^{-1}(L)$ and $(\wt{\sR}_{\omega}^{ss})^L=\wt{\sR}_{\omega}^{ss}\cap \sH^L$, then
$$\psi:(\wt{\sR}_{\omega}^{ss})^L\to \sP^L_{\omega}=(\wt{\sR}_{\omega}^{ss})^L//G.$$

We do not have good estimate of ${\rm Codim}(\sH\setminus \wt{\sR}_{\omega}^{ss})$ since sub-sheaves $(E_1, Tor(E_2))$, $(Tor(E_1), E_2)$
of $E=(E_1,E_2)$ with rank $(r,0)$, $(0,r)$ may destroy semi-stability of $(E,Q)$ where $Tor(E_i)\subset E_i$ ($i=1,\,2$) are torsion sub-sheaves.
But we have estimate of ${\rm Codim}(\sH_{\omega}\setminus \wt{\sR}_{\omega}^{ss})$, where
$$\sH_{\omega}=\left\{\aligned&\text{$(E,Q)\in\sH$, with
$n_j^{\omega}\le \chi(E_j)=\chi_j\le n_j^{\omega}+r$ ($j=1,\,2$), and}\\&\text{${\rm dim}({\rm Tor}(E_1))\le n^{\omega}_2+r-\chi_2$, $\,\,{\rm dim}({\rm Tor}(E_2))\le n^{\omega}_1+r-\chi_1$}\endaligned\right\}.$$

\begin{prop}\label{prop3.19}
Let $\sD_1^f=\hat\sD_1\cup\hat\sD_1^t$
and $\sD_2^f=\hat\sD_2\cup\hat\sD_2^t$, where $\hat\sD_i\subset \wt{\sR}$ is the Zariski closure of
$\hat\sD_{F,\,i}\subset\wt{\sR}_F$ consisting of $(E,Q)\in\wt{\sR}_F$ that $E_{x_i}\to Q$ is not an isomorphism, and
${\hat\sD}_1^t\subset\wt{\sR}$ (rep. ${\hat\sD}_2^t\subset\wt{\sR}$) consists of $(E,Q)\in\wt{\sR}$ such that $E$ is not locally
free at $x_2$ (resp. at $x_1$). Then \begin{itemize}
\item [(1)] ${\rm Codim}(\sH_{\omega}^L\setminus(\wt\sR_{\omega}^{ss})^L)>
\underset{1\le i\le 2}{{\rm min}}\left\{(r-1)(g_i-\frac{r+3}{4})+\frac{|I_i|}{k}\right\};$
\item [(2)] ${\rm Codim}((\wt\sR_{\omega}^{ss})^L\setminus\{\sD_1^f\cup\sD^f_2\}\setminus(\wt\sR_{\omega}^{s})^L)>\underset{1\le i\le 2}{{\rm min}}\left\{(r-1)(g_i-1)+\frac{|I_i|}{k}\right\}$ when $n_1^{\omega}<\chi_1<n_1^{\omega}+r$;
\item [(3)] ${\rm Codim}((\wt\sR_{\omega}^{ss})^L\setminus\{\sD_1^f\cup\sD^f_2\}\setminus W_{\omega})\ge\underset{1\le i\le 2}{{\rm min}}\left\{(r-1)(g_i-1)+\frac{|I_i|}{k}\right\}$
when $\chi_1=n_1^{\omega}$ or $n_1^{\omega}+r$, where
$$W_{\omega}:=\left\{(E,Q)\in(\wt\sR_{\omega}^{ss})^L \bigg|\begin{aligned}
& \text{$\frac{par\chi(E')-dim(Q^{E'})}{r(E')}<\frac{par\chi(E)-dim(Q)}{r(E)}$}\\
&\text{$\forall$ $E'\subset E$ of rank $(r_1,r_2)\neq (0,r)$, $(r,0)$, $(0,0)$}\end{aligned}\right\};$$
\item [(4)] ${\rm Codim}((\wt\sR^{ss}_{\omega})^L\setminus W_{\omega})\ge \underset{1\le i\le 2}{{\rm min}}\left\{(r-1)(g_i-\frac{r+3}{4})+\frac{|I_i|}{k}\right\}$.
\end{itemize}
\end{prop}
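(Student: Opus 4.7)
The plan is to adapt the proof of Proposition 5.2 of \cite{Su1} (restated above as Proposition \ref{prop3.12} for the irreducible smooth case) to the reducible curve $\wt C = C_1 \sqcup C_2$, running the stratification argument separately on each component $C_i$ and taking the minimum of the resulting codimension bounds.

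First, for statement (1), I would stratify $\sH_\omega^L \setminus (\wt\sR_\omega^{ss})^L$ by the bi-rank $(r_1', r_2')$, Euler characteristics $(\chi_1', \chi_2')$, parabolic type, and $\dim Q^{E'}$ of a maximal destabilizing subsheaf $E' \subset E$. The defining conditions of $\sH_\omega$ bound the torsion of $E$ at $x_1, x_2$ and pin $\chi_j = \chi(E_j)$ to a window of length $r$ around $n_j^\omega$, so only finitely many such numerical types contribute. Each stratum is then the image of a family parameterising:
the subsheaf $E'|_{C_i}$ and the quotient $E''|_{C_i}$ (bounded by Quot-scheme dimensions on each smooth component),
the extension class in $\mathrm{Ext}^1(E'', E')$ (computed component-wise with parabolic contributions),
the Grassmannian factor lifting $q$ compatibly with $Q^{E'}$,
modulo the fixed-determinant constraint on each $C_i$. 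The genus-type term $(r-1)(g_i - \tfrac{r+3}{4})$ then arises from the Lange-style bound on loci of sub-bundles of fixed rank and degree on $C_i$, while $|I_i|/k$ comes from the parabolic-weight denominator; taking the minimum over $i=1,2$ yields (1).

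Second, for (2), assume $n_1^\omega < \chi_1 < n_1^\omega + r$. Inside $(\wt\sR_\omega^{ss})^L \setminus (\sD_1^f \cup \sD_2^f)$ both $E_{x_i} \to Q$ are isomorphisms, so $E$ is necessarily locally free at $x_1, x_2$ and $\chi_j$ lies strictly in the interior of its allowed range. This rules out destabilizing subsheaves of rank $(r,0)$ or $(0,r)$ from the stratification, so the degeneration responsible for the $\tfrac{r+3}{4}$ term is absent, and the sharper bound $(r-1)(g_i-1) + |I_i|/k$ of \cite[Prop. 5.2]{Su1} applies on each component. For (3), at the boundary $\chi_1 \in \{n_1^\omega, n_1^\omega + r\}$ rank $(r,0)$ and $(0,r)$ subsheaves can genuinely destabilize $(E,Q)$, so I instead exclude them by hand in the definition of $W_\omega$; the remaining strata are exactly those treated in (2), giving the same codimension bound. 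Statement (4) combines (3) with the weaker $(r-1)(g_i - \tfrac{r+3}{4})$ estimate of (1), applied to the rank $(r,0)$ and $(0,r)$ strata which are now re-admitted (not intersected with the divisors $\sD_i^f$).

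The main obstacle is the codimension bookkeeping for destabilizing subsheaves of rank $(r,0)$ or $(0,r)$ at the boundary Euler characteristics $\chi_1 \in \{n_1^\omega, n_1^\omega + r\}$. Such subsheaves are supported on a single component, and a naive Quot-scheme count does not beat $g_i - 1$; the correct strategy, as in \cite{Su1}, is to absorb them into the weaker $g_i - \tfrac{r+3}{4}$ bound of (1) and (4) and exclude them explicitly from $W_\omega$ in (3). Once this dichotomy between interior and boundary $\chi_1$ is made, each stratum reduces to a standard Lange-style dimension estimate on a single smooth component $C_i$, and the four statements follow by taking the minimum of the component-wise bounds.
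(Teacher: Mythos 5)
Your stratification outline is a reasonable reconstruction of how such a bound is proved, but you should know that the paper itself does not reprove this statement: its entire proof is the observation that (1)--(3) are reformulations of Proposition 6.3 of \cite{Su3} (where determinants are not fixed) and that (4) follows from the proof of that proposition via Remark 6.7(2) of \cite{Su3}, the only new content being the remark that the argument survives the fixed-determinant restriction. So the comparison is really between your sketch and the cited proof in \cite{Su3}, whose architecture (stratify by the bi-rank, Euler characteristics and parabolic type of a destabilizing subsheaf, bound each stratum by Quot-scheme and extension-space dimensions on each component $C_i$, take the minimum over $i$) your proposal does capture at the structural level. You also correctly identify the role of the window $n_j^{\omega}\le\chi_j\le n_j^{\omega}+r$: testing against the subsheaves $(E_1,0)$ and $(0,E_2)$ produces equality exactly at the endpoints, which is why (3) must pass to $W_{\omega}$ while (2) can work with the stable locus.

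There is, however, one concrete misattribution and one substantive omission. You explain the degradation from the constant $(r-1)(g_i-1)$ in (2)--(3) to $(r-1)(g_i-\tfrac{r+3}{4})$ in (1) and (4) by the presence or absence of destabilizing subsheaves of rank $(r,0)$ or $(0,r)$. But comparing (3) and (4) refutes this: in both statements the set $W_{\omega}$ already excludes ranks $(r,0)$, $(0,r)$, $(0,0)$ from its defining condition, yet the bound drops to $\tfrac{r+3}{4}$ in (4) precisely because the divisorial loci $\sD_1^f\cup\sD_2^f$ (torsion at $x_1,x_2$, or $E_{x_i}\to Q$ failing to be an isomorphism) are no longer removed. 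The weaker constant is therefore tied to the geometry of $\hat\sD_i$ and $\hat\sD_i^t$ and to the sheaves with torsion controlled by the definition of $\sH_{\omega}$, not to the bi-rank of the test subsheaf alone; this is exactly the subtlety the paper flags just before the proposition when it explains why one only estimates ${\rm Codim}(\sH_{\omega}\setminus\wt\sR^{ss}_{\omega})$ rather than ${\rm Codim}(\sH\setminus\wt\sR^{ss}_{\omega})$. The omission is that the actual dimension counts producing $(r-1)(g_i-\tfrac{r+3}{4})+\tfrac{|I_i|}{k}$ --- the only genuinely quantitative content of the proposition --- are asserted by appeal to a ``Lange-style bound'' rather than carried out; as written, your argument establishes the shape of the answer but not the constants, which is where all the work in Proposition 6.3 of \cite{Su3} resides.
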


\begin{proof} The statements (1), (2) and (3) are in fact reformulations of Proposition 6.3 in \cite{Su3} where determinants are not fixed. (4) follows the proof of
Proposition 6.3 in \cite{Su3} (see Remark 6.7 (2) of \cite{Su3}).
\end{proof}

\begin{prop}\label{prop3.20} For any $\omega$, let $\wt{\sR}_{\omega}^{ss}\xrightarrow{\psi} \sP_{\omega}:=\wt{\sR}_{\omega}^{ss}//G$ and assume
$$\underset{1\le i\le 2}{{\rm min}}\left\{(r-1)(g_i-\frac{r+3}{4})+\frac{|I_i|}{k}\right\}\ge 2.$$
Then $(\psi_*\omega_{\wt{\sR}_{\omega}^{ss}})^{inv.}=\omega_{\sP_{\omega}}$. For
$\omega_c=(2r,\{\vec n(x),\,\vec a_c(x)\}_{x\in I_1\cup I_2})$ satisfying
$$\underset{1\le i\le 2}{{\rm min}}\left\{(r-1)(g_i-\frac{r+3}{4})+\frac{|I_i|}{2r}\right\}\ge 2,$$
there is an ample line bundle $\Theta_{\sP_{\omega_c}}$ on $\sP_{\omega_c}$ such that
$$\omega^{-1}_{\sP_{\omega_c}}=\Theta_{\sP_{\omega_c}}\otimes {\rm Det}_{\sP_{\omega_c}}^*(\Theta_{J^d_{\wt C}}^{-1}).$$
In particular, for any $L\in J^d_{\wt C}$, $\sP^L_{\omega_c}$ is a normal Fano variety with only rational singularities.
\end{prop}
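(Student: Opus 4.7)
The plan is to mirror Proposition \ref{prop3.13} in the reducible setting, using Proposition \ref{prop3.19} in place of Propositions \ref{prop3.5} and \ref{prop3.12}.

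First, for the identity $(\psi_*\omega_{\wt{\sR}_{\omega}^{ss}})^{inv.}=\omega_{\sP_{\omega}}$, I would invoke Lemma 5.6 of \cite{Su1}. That lemma requires the unstable locus, the strictly semistable locus away from the bad divisors $\sD^f_1\cup\sD^f_2$, and the locus $(\wt\sR^{ss}_{\omega})^L\setminus W_{\omega}$ all to have codimension $\ge 2$ in the parameter space. Under the hypothesis
$$\underset{1\le i\le 2}{{\rm min}}\Bigl\{(r-1)\bigl(g_i-\tfrac{r+3}{4}\bigr)+\tfrac{|I_i|}{k}\Bigr\}\ge 2,$$
Proposition \ref{prop3.19} (1)--(4) deliver exactly those codimension bounds, so Lemma 5.6 of \cite{Su1} applies verbatim (its $g\ge 2$ hypothesis being replaced by our codimension assumption, as in the proof of Proposition \ref{prop3.13}).

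For the $\omega_c$ statement I would next compute $\omega^{-1}_{\sH}$ on the normal Gorenstein scheme $\sH$, paralleling Proposition 3.4 of \cite{Su1} in the reducible setting. The two universal quotients $V_i\otimes\sO_{C_i}(-N)\to\sE^i\to 0$ give two determinant-of-cohomology factors, the universal GPS quotient $\sE_{x_1}\oplus\sE_{x_2}\to\sQ\to 0$ at the node contributes $(\det\sQ)^{2r}$, the flag quotients at $x\in I_1\cup I_2$ contribute factors $(\det\sQ_{x,i})^{n_i(x)+n_{i+1}(x)}$ whose exponents are precisely the weight gaps $\bar a_{i+1}(x)-\bar a_i(x)$ in $\vec a_c(x)$, and the auxiliary point $y$ contributes a twist of degree $2\chi-2r$. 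Collecting terms yields
$$\omega^{-1}_{\sH}=\hat\Theta_{\omega_c}\otimes \hat{\rm Det}_{\sH}^*(\Theta_{J^d_{\wt C}}^{-1}),$$
where $\hat\Theta_{\omega_c}$ is precisely the polarization associated to $\omega_c=(2r,\{\vec n(x),\vec a_c(x)\}_{x\in I_1\cup I_2})$. Lemma 2.3 of \cite{Su1} then says $\hat\Theta_{\omega_c}|_{\wt\sR^{ss}_{\omega_c}}$ descends to an ample line bundle $\Theta_{\sP_{\omega_c}}$ on the GIT quotient, while $\hat{\rm Det}_{\sH}^*(\Theta_{J^d_{\wt C}}^{-1})$ descends via ${\rm Det}_{\sP_{\omega_c}}$. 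Combining with the first part,
$$\omega^{-1}_{\sP_{\omega_c}}=(\psi_*\omega^{-1}_{\wt\sR^{ss}_{\omega_c}})^{inv.}=\Theta_{\sP_{\omega_c}}\otimes{\rm Det}_{\sP_{\omega_c}}^*(\Theta_{J^d_{\wt C}}^{-1}).$$
Restricting to $\sP^L_{\omega_c}={\rm Det}_{\sP_{\omega_c}}^{-1}(L)$ kills the determinant pullback, so $\omega^{-1}_{\sP^L_{\omega_c}}$ is ample; Theorem \ref{thm3.18} supplies normality and rational singularities, making $\sP^L_{\omega_c}$ a Fano variety with rational singularities.

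The main obstacle I anticipate is the codimension bookkeeping rather than any conceptually new ingredient. In the reducible case, subsheaves of rank $(r,0)$ or $(0,r)$ (supported on a single component, possibly with torsion at the node) can destabilize a GPS in ways absent from the irreducible setting, which is why the hypothesis carries the ``surcharge'' $(r+3)/4$ in place of the cleaner $(r-1)(g-1)$ of Proposition \ref{prop3.13}. Fortunately all of that work is already packaged into Proposition \ref{prop3.19}, so once those bounds are invoked the remainder is a direct transcription of Proposition \ref{prop3.13}'s argument.
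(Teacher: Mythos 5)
Your treatment of the second half (the formula for $\omega^{-1}_{\sH}$, descent of $\hat\Theta_{\omega_c}$, restriction to the fibre of ${\rm Det}$) matches the paper, which simply cites Proposition 6.4 of \cite{Su3} for the identity $\omega^{-1}_{\wt\sR^{ss}_{\omega_c}}=\psi^*(\Theta_{\sP_{\omega_c}}\otimes{\rm Det}_{\sP_{\omega_c}}^*(\Theta_{J^d_{\wt C}}^{-1}))$. The gap is in the first half, the identity $(\psi_*\omega_{\wt{\sR}_{\omega}^{ss}})^{inv.}=\omega_{\sP_{\omega}}$. You propose to apply Lemma 5.6 of \cite{Su1} ``verbatim'' once the codimension bounds of Proposition \ref{prop3.19} are in hand, but those bounds are exactly the wrong shape for that: items (2) and (3) of Proposition \ref{prop3.19} control the non-stable locus (resp.\ the complement of $W_\omega$) only \emph{after removing} the divisors $\sD^f_1\cup\sD^f_2$, which are codimension one and consist entirely of non-stable points. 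Knop's criterion (Lemma 4.17 of \cite{NR}), which is what ultimately underlies Lemma 5.6 of \cite{Su1}, requires (i) that the locus where $G$ does not act freely (modulo scalars) have codimension at least two, and (ii) that every prime divisor of $\wt\sR^{ss}_\omega$ have divisorial image under $\psi$. Neither condition is settled on $\hat\sD_j$ and $\hat\sD_j^t$ by the codimension estimates, and Lemma 5.6 of \cite{Su1} is proved in the irreducible setting, where the group, the rank-$(r_1,r_2)$ phenomena, and the extremal cases $\chi_1=n_1^\omega$ or $n_1^\omega+r$ do not arise; it cannot simply be transported.

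The paper therefore goes back to Knop's criterion and verifies both conditions by hand in the reducible setting. For (i) it constructs, on each of $\hat\sD_1,\hat\sD_2,\hat\sD_1^t,\hat\sD_2^t$, an explicit GPS $(E,Q)$ built from stable parabolic bundles $E_i'$ on the two components (with a torsion summand $_{x_1}\mathbb{C}\cdot\beta$ in the $\hat\sD_j^t$ case) and checks by a direct computation with the kernel $K\subset E_{x_1}\oplus E_{x_2}$ that ${\rm Aut}(E,Q)=\mathbb{C}^*$, so the non-free locus meets these divisors in codimension at least two; it also splits into the cases $n_1^\omega<\chi_1<n_1^\omega+r$ (where Proposition \ref{prop3.19}(2) applies) and $\chi_1=n_1^\omega$ or $n_1^\omega+r$ (where one must instead use $W_\omega$ and Proposition \ref{prop3.19}(3) together with Lemma 6.1(4) of \cite{Su3}). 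For (ii) it invokes Proposition 2.5 of \cite{Su2} to see that $\psi(\hat\sD_j)=\psi(\hat\sD_j^t)=\sD_j$ are divisors. These verifications are the actual content of the first assertion and are missing from your argument.
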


\begin{proof} According to a result of Knop in \cite{Kn} (see Lemma 4.17 of \cite{NR} for its global formulation), to prove $(\psi_*\omega_{\wt{\sR}_{\omega}^{ss}})^{inv.}=\omega_{\sP_{\omega}}$, it is enough to show that (1) the subset where the action of $G$ is not free has codimension at least two;
(2) for every prime divisor $D$ in $\wt{\sR}_{\omega}^{ss}$, $\psi(D)$ has codimension at most $1$.

To verify condition (1), when $n_1^{\omega}<\chi_1<n_1^{\omega}+r$, we have
$${\rm Codim}(\wt\sR_{\omega}^{ss}\setminus\{\sD_1^f\cup\sD^f_2\}\setminus\wt\sR_{\omega}^{s})>\underset{1\le i\le 2}{{\rm min}}\left\{(r-1)(g_i-1)+\frac{|I_i|}{k}\right\}\ge 2.$$
Note that $\sD^f_j=\hat\sD_j\cup \hat\sD^t_j$ where $\hat\sD_j$, $\hat\sD^t_j$ are irreducible, normal subvarieties (see Proposition C.7 of \cite{NR}), and the subsets of $\hat\sD_j$ and $\hat\sD^t_j$, where the action of $G$ is free, are open subsets.
Thus it is enough to find a $(E, Q)\in\sD^f_j$ ($j=1,\,2$) such that its automorphisms are only scales.
Let $E_i'$ ($i=1,\,2$) be stable parabolic bundles of rank $r$ and $\chi(E'_i)=\chi_i'$ on $C_i$ with parabolic structures determined by $(k, \{\vec n(x),\vec a(x)\}_{x\in I_i})$. If we take $\chi_1'=\chi_1-1$, $\chi_2'=\chi_2$, let $E_1=E_1'\oplus\,_{x_1}\mathbb{C}\cdot\beta$, $E_2=E_2'$ and
$E=(E_1,E_2)$, the surjection $E_{x_1}\oplus E_{x_2}\xrightarrow{q} Q$ is defined by any isomorphism $E_{x_2}\xrightarrow{q_2} Q$ and
a linear map $E_{x_1}=(E'_1)_{x_1}\oplus \mathbb{C}\cdot\beta\xrightarrow{q_1} Q$ such that $q_1(\beta)\neq 0$ and $q_1|_{(E'_1)_{x_1}}\neq 0$. Then $(E,Q)\in {\hat\sD}_2^t$ by definition.
To see ${\rm Aut}((E,Q))=\mathbb{C}^*$, let $0\to K\to E_{x_1}\oplus E_{x_2}\xrightarrow{q}Q\to 0$ and $(E,Q)\xrightarrow{\Phi}(E,Q)$ be an isomorphism. Then $E\xrightarrow{\Phi}E$ is an isomorphism of parabolic bundles such that $\Phi_{x_1+x_2}(K)=K$. Since $E_1'$, $E_2$ are stable, $\Phi|_{E_1}=(\lambda'_1,\lambda_1):E_1'\oplus\,_{x_1}\mathbb{C}\beta\to E_1'\oplus\,_{x_1}\mathbb{C}\beta$ and $\Phi|_{E_2}=\lambda_2: E_2\to E_2$ for nonzero constants
$\lambda_1',\,\lambda_1,\,\lambda_2$. The requirement $\Phi_{x_1+x_2}(K)=K$ implies that $\lambda'_1=\lambda_1=\lambda_2$.
In fact, $K=\{\,(\alpha, f(\alpha))\in E_{x_1}\oplus E_{x_2}\,|\,\forall\,\alpha\in E_{x_1}\}$ where $f=-q_2^{-1}q_1: E_{x_1}\to E_{x_2}$. For any $\alpha=\alpha'+\beta\in E_{x_1}$, we have
$$\Phi_{x_1+x_2}(\alpha, f(\alpha))=(\lambda'_1\alpha'+\lambda_1\beta, \lambda_2f(\alpha')+\lambda_2f(\beta))\in K$$
which implies that $\lambda_2f(\alpha')+\lambda_2f(\beta)=\lambda'_1f(\alpha')+\lambda_1f(\beta)$. Thus $\lambda_1=\lambda_2$ (by taking $\alpha'=0$)
and $\lambda_1'=\lambda_2$ (by taking $\alpha'$ such that $q_1(\alpha')\neq 0$).
Similarly, one can find such $(E,Q)\in  {\hat\sD}_1^t$. To construct $(E, Q)\in \hat\sD_j$ with ${\rm Aut}(E,Q)=\mathbb{C}^*$, we take $\chi'_i=\chi_i$, $E_i=E_i'$ and
$E=(E_1, E_2)$ with $$E_{x_1}\oplus E_{x_2}\xrightarrow{q}Q\to 0$$
defined by any isomorphism $q_2: E_{x_2}\to Q$ and nontrivial linear map $q_1:E_{x_1}\to Q$ (which is not surjective). Thus $(E,Q)\in \hat\sD_1$ and ${\rm Aut}(E,Q)=\mathbb{C}^*$. Similarly one can find such $(E,Q)\in \hat\sD_2$. When $\chi_1=n_1^{\omega}$ or $n_1^{\omega}+r$, we have
${\rm Codim}(\wt\sR_{\omega}^{ss}\setminus\{\sD_1^f\cup\sD^f_2\}\setminus W_{\omega})\ge 2$. Thus we only need to show, for any
$(E,Q)\in (\wt\sR_{\omega}^{ss}\setminus\{\sD_1^f\cup\sD^f_2\})\cap W_{\omega}$, ${\rm Aut}((E,Q))=\mathbb{C}^*$. This is easy since the proof of
Lemma 6.1 (4) in \cite{Su3} implies stability of parabolic bundles $E_1$ and $E_2$. Thus any automorphism of $E=(E_1,E_2)$ must be of type
$(\lambda_1id_{E_1},\lambda_2id_{E_2})$, which induces an automorphism of $(E,Q)$ if and only if $\lambda_1=\lambda_2$.

To verify condition (2), if a prime divisor $D$ is not contained in $\wt\sR_{\omega}^{ss}\setminus W_{\omega}$, $\psi(D)$ is a divisor. If $D$ is contained
in $\wt\sR_{\omega}^{ss}\setminus W_{\omega}$, then $D$ must be one of $\hat\sD_j$, $\hat\sD^t_j$ since ${\rm Codim}(\wt\sR_{\omega}^{ss}\setminus\{\sD_1^f\cup\sD^f_2\}\setminus W_{\omega})\ge 2$. However, $\psi(\hat\sD_j)=\psi(\hat\sD^t_j)=\sD_j$ ($j=1,\,2$)
by Proposition 2.5 of \cite{Su2}, which are divisors. Thus we have proved that $(\psi_*\omega_{\wt{\sR}_{\omega}^{ss}})^{inv.}=\omega_{\sP_{\omega}}$.

When $\omega=\omega_c$, by Proposition 6.4 of \cite{Su3}, there is an ample line bundle $\Theta_{\sP_{\omega_c}}$ on $\sP_{\omega_c}$ such that
$\omega_{\wt{\sR}_{\omega_c}^{ss}}^{-1}=\psi^*(\Theta_{\sP_{\omega_c}}\otimes {\rm Det}_{\sP_{\omega_c}}^*(\Theta_{J^d_{\wt C}}^{-1})).$ Thus
$$\omega_{\sP_{\omega_c}}=(\psi_*(\omega_{\wt{\sR}_{\omega_c}^{ss}}))^{inv.}=\Theta_{\sP_{\omega_c}}^{-1}\otimes {\rm Det}_{\sP_{\omega_c}}^*(\Theta_{J^d_{\wt C}})$$
and $\omega^{-1}_{\sP^L_{\omega_c}}=\Theta_{\sP_{\omega_c}}|_{\sP^L_{\omega_c}}$ is ample, $\sP^L_{\omega_c}$ is a normal Fano variety with only rational singularities.
\end{proof}

\begin{lem}\label{lem3.21} Let $V$ be a normal variety acting by a reductive group $G$. Suppose a good quotient $\phi:V\to U$ exists. Let $\sL$ be a line bundle on $U$ and
$\wt \sL=\phi^*(\sL)$. Let $V''\subset V'\subset V$ be open $G$-invariant subvarieties of $V$ such that $\phi(V')=U$ and $V''=\phi^{-1}(U'')$ for some nonempty open subset
$U''\subset U$. Then $\phi^G_*(\wt\sL|_{V'})=\sL$ (i.e, for any nonempty open set $X\subset U$,
${\rm H}^0(\phi^{-1}(X),\wt\sL)^{inv.}\to {\rm H}^0(V'\cap\phi^{-1}(X),\wt\sL)^{inv.}$ is an isomorphism).

\end{lem}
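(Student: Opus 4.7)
Since the assertion is local on $U$, by replacing $\phi$ by its restriction $\phi^{-1}(X)\to X$ I reduce to the case $X=U$: the goal is to show that the pullback map $H^0(U,\sL)\to H^0(V',\wt\sL)^{inv.}$, $\sigma\mapsto(\phi^*\sigma)|_{V'}$, is an isomorphism. Injectivity is immediate from $\phi(V')=U$, since every $u\in U$ admits a lift $v\in V'$ with $(\phi^*\sigma)(v)=\sigma(u)$, so if $(\phi^*\sigma)|_{V'}=0$ then $\sigma=0$.

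For surjectivity I take $s\in H^0(V',\wt\sL)^{inv.}$ and proceed in three steps: restrict to $V''$, descend to $U''$, and extend to $U$. The saturation $V''=\phi^{-1}(U'')$ together with the good-quotient identity $(\phi_*\sO_V)^G=\sO_U$ and the formula $\wt\sL=\phi^*\sL$ produces a unique $\sigma_0\in H^0(U'',\sL)$ with $\phi^*\sigma_0=s|_{V''}$. Since $V$ is normal, so is the good quotient $U$, and $\sigma_0$ extends uniquely to a rational section $\sigma$ of $\sL$ on $U$ whose pole divisor $D$ is supported in $U\setminus U''$. Once $D=0$ is established, $\sigma\in H^0(U,\sL)$, and the agreement $\phi^*\sigma=s$ on the dense open $V''\subset V'$ (dense because $U''$ is dense in the irreducible $U$ and $\phi$ is surjective, hence $\phi^{-1}(U'')$ is nonempty open in the irreducible $V$) upgrades to equality on $V'$.

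The crux is therefore the pole-chase ruling out $D\ne 0$. Suppose $\sigma$ has a pole of order $n\ge 1$ along a prime divisor $D_0\subset U\setminus U''$. Normality of $U$ makes $D_0$ Cartier at its generic point, and since $\phi$ is dominant, $\phi^{-1}(D_0)$ is a Weil divisor in $V$ whose irreducible components $E_1,\dots,E_k$ carry positive pullback multiplicities $m_1,\dots,m_k$. A local computation in a trivialization of $\sL$ near the generic point of $D_0$ shows that $\phi^*\sigma$ has a pole of order $m_in\ge 1$ along each $E_i$. The decisive input $\phi(V')=U\supset D_0$ forces $V'\cap\phi^{-1}(D_0)\ne\emptyset$, so some $E_i$ meets $V'$, exhibiting a genuine pole of $\phi^*\sigma$ inside $V'$. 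But $\phi^*\sigma$ and $s$ agree on the dense $V''\subset V'$, hence coincide on $V'$ as rational sections, contradicting the regularity of $s$. Thus $D=0$, and $\sigma$ is the desired preimage. The principal obstacle is precisely this pole-chase: descent alone only yields $\sigma_0$ on $U''$, and it is the surjectivity hypothesis $\phi(V')=U$, not merely the saturation of $V''$, that propagates regularity of $s$ across the divisorial complement $U\setminus U''$.
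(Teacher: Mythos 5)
Your overall strategy is sound and is genuinely more informative than the paper's treatment: the paper disposes of this lemma in one line by declaring it a reformulation of Lemma 4.16 of Narasimhan--Ramadas, whereas you give a self-contained argument. Your skeleton --- descend $s|_{V''}$ to $\sigma_0$ on $U''$ using saturation of $V''$, extend to a rational section $\sigma$ on the normal quotient $U$, and use $\phi(V')=U$ to rule out poles along divisorial components of $U\setminus U''$ --- is the right one, and you correctly identify which hypothesis does which job.

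There is, however, one step that is not justified as written and is false in general: the claim that $\phi^*\sigma$ has a pole of order $m_in\ge 1$ along \emph{each} irreducible component $E_i$ of $\phi^{-1}(D_0)$. Writing $\sigma=f^{-n}\tau$ near the generic point of $D_0$, with $f$ a local equation and $\tau$ a regular section not vanishing along $D_0$, the order of $\phi^*\sigma$ along $E_i$ is $\mathrm{ord}_{E_i}(\phi^*\tau)-n\cdot\mathrm{ord}_{E_i}(\phi^*f)$; this is guaranteed negative only when the generic point of $E_i$ maps to the generic point of $D_0$, i.e.\ when $E_i$ dominates $D_0$. For a component $E_i$ whose image is a proper closed subset of $D_0$, the local equation $f$ carries no information at $\phi(\eta_{E_i})$ and the pole can disappear (already for a non-quotient map such as a blow-up, $y/x$ pulled back is regular along the exceptional divisor while singular along the strict transform of $\{x=0\}$). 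Consequently "some $E_i$ meets $V'$" does not by itself exhibit a pole inside $V'$. The repair is short: since $\phi(V')=U$, every point of $D_0$ lifts to $V'\cap\phi^{-1}(D_0)$, so $\phi\bigl(V'\cap\phi^{-1}(D_0)\bigr)=D_0$; as $\phi^{-1}(D_0)$ has only finitely many irreducible components and $D_0$ is irreducible, at least one component meeting $V'$ must dominate $D_0$, and along that component the pole of order $m_in\ge 1$ does survive, its generic point lies in the open set $V'$, and you get the desired contradiction with the regularity of $s=\phi^*\sigma$ on $V'$. With this patch (and the implicit standing assumption that $V$, hence $U$, is irreducible, which you use when invoking density of $V''$ in $V'$ and of $U''\cap X$ in $X$), the proof is correct.
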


\begin{proof} It is in fact a reformulation of Lemma 4.16 in \cite{NR}, where
$${\rm H}^0(V,\wt\sL)^{inv.}\to {\rm H}^0(V',\wt\sL)^{inv.}$$ was shown to be an isomorphism.
\end{proof}

\begin{thm}\label{thm3.22} For any data $\omega=(\{k,\vec n(x),\,\,\vec a(x)\}_{x\in I_1\cup I_2},\sO(1))$ and integers $\chi_1$, $\chi_2$ satisfying $\chi_1+\chi_2=\chi+r$, $n_j^{\omega}\le\chi_j\le n_j^{\omega}+r$ ($j=1,\,2$), let $\sP^L_{\omega}$ be
the coarse  moduli space  of $s$-equivalence classes of semi-stable GPS $E=(E_1, E_2)$ on $\wt C$ with fixed determinant $L$, $\chi(E_j)=\chi_j$ and parabolic structures at the points of $I$ given by the data $\omega$. Then $\sP^L_{\omega}$ is of globally $F$-regular type.
\end{thm}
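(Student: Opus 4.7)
The plan is to adapt the argument of Theorem \ref{thm3.14} to the reducible nodal-curve setting, using Proposition \ref{prop2.10} with $\sP^L_{\omega_c}$ as the ambient target of globally $F$-regular type. The template is: enlarge the parabolic datum by adding auxiliary points, promote the resulting moduli space to a Fano quotient, verify the hypotheses of Proposition \ref{prop2.10}, and descend.

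First I would choose a finite subset $I' \subset \wt C \setminus I$ of smooth points, distributed as $I'_j = I' \cap C_j$ with $I'\cap I=\emptyset$, with $|I'_j|$ taken large enough that
$$\underset{1\le i\le 2}{{\rm min}}\left\{(r-1)\left(g_i-\tfrac{r+3}{4}\right)+\tfrac{|I_i\cup I'_i|}{2r}\right\}\ge 2.$$
Set $\omega_c=(2r,\{\vec n(x),\vec a_c(x)\}_{x\in I\cup I'})$ in the sense of Proposition \ref{prop3.6}. Then Proposition \ref{prop3.20} says that $\sP^L_{\omega_c}$ is a normal Fano variety with rational singularities, hence of globally $F$-regular type by Proposition \ref{prop2.6}.

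Second, I would form the flag bundle
$$\hat f\colon \wt\sR' := Grass_r(\sE_{x_1}\oplus\sE_{x_2})\times_{\mathbf{Q}}\underset{x\in I\cup I'}{\times_{\mathbf{Q}}}Flag_{\vec n(x)}(\sF_x)\longrightarrow \wt\sR,$$
and restrict to the fixed-determinant locus $(\sH')^L = \hat f^{-1}(\sH^L)$. Since $\hat f$ just forgets the additional flags at $I'$, it is a $G$-equivariant flag bundle projection, hence affine, $p$-compatible, and satisfies $\hat f_*\sO_{(\sH')^L}=\sO_{\sH^L}$. With $\sR=\sH^L$ and $\sR'=(\sH')^L$, this verifies conditions (1) and (2) of Proposition \ref{prop2.10}, because $(\wt\sR^{ss}_\omega)^L\subset\sH^L$ and $(\wt\sR'^{ss}_{\omega_c})^L\subset(\sH')^L$ for all relevant polarizations. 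Take $W=W_{\omega_c}\subset(\wt\sR'^{ss}_{\omega_c})^L$, $\hat X=W\cap \hat f^{-1}((\wt\sR^{ss}_\omega)^L)$, and write $\varphi\colon(\wt\sR'^{ss}_{\omega_c})^L\to\sP^L_{\omega_c}$. Combining parts (1) and (4) of Proposition \ref{prop3.19} with the numerical choice above yields $\mathrm{Codim}((\sH')^L\setminus W)\ge 2$. For the remaining condition $\hat X=\varphi^{-1}\varphi(\hat X)$, I would mimic Theorem \ref{thm3.14} and prove
$$(E,Q)\in\hat X\quad\Longleftrightarrow\quad gr(E,Q)\in\hat X\qquad\forall\,(E,Q)\in(\wt\sR'^{ss}_{\omega_c})^L,$$
where $gr(E,Q)$ denotes the Jordan--H\"older graded object with respect to $\omega_c$.

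The hard part will be this last verification, since in the reducible setting subsheaves of ranks $(r,0)$ and $(0,r)$ can destabilize a GPS, which is precisely why $W_\omega$ in Proposition \ref{prop3.19} excludes those rank strata. I would need to check (a) that membership in $W_{\omega_c}$ is preserved under $(E,Q)\mapsto gr(E,Q)$, which should follow from the fact that the extra summands of $gr(E,Q)$ are torsion sheaves at $x_1,x_2$ whose ranks are among $(0,0),(r,0),(0,r)$ and therefore fall outside the slope inequalities imposed by $W_{\omega_c}$; and (b) that $\omega$-semistability of $(E,Q)$ coincides with $\omega$-semistability of the stable summand $(\wt E,\wt Q)$ of $gr(E,Q)$, since the torsion summands again contribute trivially to the non-excluded slope inequalities. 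Once this combinatorial check is in place, Proposition \ref{prop2.10} immediately gives that $\sP^L_\omega$ is of globally $F$-regular type.
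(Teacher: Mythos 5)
Your overall strategy matches the paper's, but there is a genuine gap at the one place where this theorem differs essentially from Theorem \ref{thm3.14}: your claim that Proposition \ref{prop3.19} (1) and (4) give ${\rm Codim}((\sH')^L\setminus W)\ge 2$. Proposition \ref{prop3.19}(1) bounds the codimension of the complement of the semistable locus only inside $\sH_{\omega}^L$, i.e.\ the locus where the torsion of $E_1$ and $E_2$ is suitably bounded --- not inside all of $\sH^L$. The paper is explicit that no good estimate of ${\rm Codim}(\sH\setminus\wt\sR^{ss}_{\omega})$ is available, because subsheaves of ranks $(r,0)$ and $(0,r)$ built from torsion can destabilize $(E,Q)$; and the proof of Theorem \ref{thm3.22} states outright that ``we do not have ${\rm Codim}((\sH')^L\setminus W)\ge 2$ as required in Proposition \ref{prop2.10}.'' So condition (3) of Proposition \ref{prop2.10} cannot be verified as you propose, and the descent $f_*\sO_X=\sO_Y$ does not follow from that proposition alone. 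The paper's workaround is to restrict to the $G$-invariant open locus $(\sH'_F)^L$ of GPS with $E$ locally free, where ${\rm Codim}((\sH'_F)^L\setminus W)\ge 2$ does hold by Proposition \ref{prop3.19} (1) and (2), and then to invoke Lemma \ref{lem3.21} (the Narasimhan--Ramadas lemma on invariant sections over saturated open subsets) to compute $\sO_Y(U)$ and $\sO_X(f^{-1}(U))$ via $(\wt\sR^{ss}_{\omega,F})^L$ and $\hat X_F=\hat X\cap(\sH'_F)^L$, recovering $f_*\sO_X=\sO_Y$ by hand. Your proposal is missing both this reduction to the locally free locus and the appeal to Lemma \ref{lem3.21}.

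Two smaller omissions. First, to use Proposition \ref{prop3.19}(2) you need $n_j^{\omega_c}<\chi_j<n_j^{\omega_c}+r$, which is not automatic: the paper arranges it by choosing the flag types $\{\vec n(x)\}_{x\in I'}$ so that $\ell^c_j=\tfrac{c_j}{c_1+c_2}\ell^c$, which forces $n_j^{\omega_c}=\chi_j-\tfrac r2$. Second, in the verification that $\hat X=\varphi^{-1}\varphi(\hat X)$, the case where the stable quotient $(\wt E,\wt Q)$ has rank $(0,r)$ (so the sub-object has rank $(r,0)$) requires the observation that the Jordan--H\"older extension splits in that case; ``the torsion summands contribute trivially'' does not by itself cover the rank-$(r,0)$/$(0,r)$ decomposition in item (2) of the paper's case analysis.
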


\begin{proof} Let $I'_i\subset X_i\setminus (I_i\cup\{x_i\})$ be a subset and $I'=I'_1\cup I'_2$. Recall
$$\sR_i=\underset{x\in I_i}{\times_{\textbf{Q}_i}}
Flag_{\vec n(x)}(\sF^i_x)\to \textbf{Q}_i$$ and
$\rho:\widetilde{\sR}=Grass_r(\sF^1_{x_1}\oplus\sF^2_{x_2})\to\sR=\sR_1\times\sR_2$, let
$$\sR_i'=\underset{x\in I_i\cup I_i'}{\times_{\mathbf{Q}}}
Flag_{\vec n(x)}(\sF^i_x)\to \sR_i,\quad \sR'=\sR_1'\times\sR_2'\xrightarrow{\hat f}\sR=\sR_1\times \sR_2$$ be the projection and
$\wt\sR':=\wt\sR\times_{\sR}\sR'\xrightarrow{\hat f}\wt\sR$ be induced via the diagram
$$\CD
  {\wt\sR}' @>\rho>> {\sR}' \\
  @V \hat f VV @V \hat f VV  \\
  \wt\sR @>\rho>> \sR
\endCD$$
Then, on $\sH^L\subset \wt\sR$,  it is clear that $(\sH')^L:={\hat f}^{-1}(\sH^L)\xrightarrow{\hat f} \sH^L$ is a $G$-invariant and
$p$-compatible morphism such that $\hat f_*\sO_{(\sH')^L}=\sO_{\sH^L}$.

For $\omega=(k, \{\vec n(x),\,\vec a(x)\}_{x\in I},\sO(1))$, $\omega_c=(2r, \{\vec n(x),\,\vec a_c(x)\}_{x\in I\cup I'},\sO(1))$, we have
$(\wt\sR_{\omega}^{ss})^L\subset\sH^L$, $\,(\wt\sR_{\omega_c}^{\prime\,ss})^L\subset (\sH')^L$. Moreover, for $\omega_c$,
let $\ell_j^c=2\chi_j-r-\sum_{x\in I_j\cup I'_j}r_{l_x}(x)$ and $\ell^c=\ell^c_1+\ell^c_2=2\chi-\sum_{x\in I\cup I'}r_{l_x}(x).$
Then $$\sum_{x\in I\cup I'}\sum^{l_x}_{i=1} (\bar a_{i+1}(x)-\bar a_i(x))r_i(x) +r\ell^c= 2r\chi.$$
The choices of $\{\vec n(x)\}_{x\in I'}$ satisfying $\ell^c_j=\frac{c_j}{c_1+c_2}\ell^c$ for arbitrary large $|I'_1|$ and $|I'_2|$ are possible and it is easy to compute that $n_j^{\omega^c}=\chi_j-\frac{r}{2}$, thus
$$n_j^{\omega^c}<\chi_j<n_j^{\omega^c}+r\qquad (j=1,\,\,2).$$
Recall $(\wt\sR_{\omega}^{ss})^L\xrightarrow{\psi}\sP^L_{\omega}:=Y,\quad (\wt\sR_{\omega_c}^{\prime\,ss})^L\xrightarrow{\varphi}\sP^L_{\omega_c}:=Z$, choose $I_i'$ satisfying
$$\underset{1\le i\le 2}{{\rm min}}\left\{(r-1)(g_i-\frac{r+3}{4})+\frac{|I_i|+|I_i'|}{2r}\right\}\ge 2.$$
Then $Z$ is a normal Fano variety with only rational singularities by Proposition \ref{prop3.20}, which is in particular of globally $F$-regular type.
To apply Proposition \ref{prop2.10}, let $$W=W_{\omega_c}\subset (\wt\sR_{\omega_c}^{\prime\,ss})^L,\quad \hat X=W\cap\hat f^{-1}((\wt\sR_{\omega}^{ss})^L).$$
For any $(E, Q)\in (\wt\sR_{\omega_c}^{\prime\,ss})^L\setminus(\wt\sR_{\omega_c}^{\prime\,s})^L$, there is an exact sequence
\ga{3.10}{0\to (E',Q')\to (E,Q)\to (\wt E,\wt Q)\to 0}
in the category $\sC_{\mu}$ (see Proposition 2.4 of \cite{Su2}) such that $(\wt E, \wt Q)$ is stable (respect to $\omega_c$). Then either $\wt E$ is torsion free when $r(\wt E)>0$ or $(\wt E,\wt Q)=(\,_{x_i}\mathbb{C}, \mathbb{C})$. If $(E,Q)\in W$, $\wt E$ has rank $r$ or rank $(r, 0)$, $(0,r)$ when $r(\wt E)>0$. Thus
it is easy to show that $(E,Q)\in W$ if and only if $gr(E,Q)$ is one of the following\begin{itemize}
\item [(1)] $gr(E,Q)=(\wt E, \wt Q)\oplus (\,_{x_1}\tau_1\oplus\,_{x_2}\tau_2, \tau_1\oplus\tau_2)$ where $(\wt E,\wt Q)\in\sC_{\mu}$ is stable of rank $(r,r)$;
\item [(2)] $gr(E,Q)=(\wt E_1,\wt Q_1)\oplus (\wt E_2,\wt Q_2)\oplus (\,_{x_1}\tau_1\oplus\,_{x_2}\tau_2, \tau_1\oplus\tau_2)$
where $(\wt E_1,\wt Q_1)$ and $(\wt E_2,\wt Q_2)\in \sC_{\mu}$ are stable of rank $(r,0)$ and $(0,r)$,\end{itemize}
which implies that $\varphi^{-1}\varphi(W)=W$. Hence, to check that $\hat X=\varphi^{-1}\varphi(\hat X)$, it is enough to show that $(E,Q)$
is semi-stable (respect to $\omega$) if and only if the above GPS
$(\wt E,\wt Q)$, $(\wt E_1,\wt Q_1)$ and $(\wt E_2,\wt Q_2)$ in (1) and (2) are semi-stable (respect to $\omega$) with the same slope $\mu_{\omega}(E,Q)$, which is easy to
check by using \eqref{3.10} when either $\wt E$ has rank $r$ or $r(\wt E)=0$. If $\wt E$ has rank $(0,r)$, $E'$ must have rank $(r,0)$. Then $(E,Q)$ is $\omega$-semistable if and only if $(E',Q')$, $(\wt E,\wt Q)$ are $\omega$-semistable with $\mu_{\omega}(E',Q')=\mu_{\omega}(\wt E,\wt Q)=\mu_{\omega}(E,Q)$ since the exact sequence
\eqref{3.10} is split in this case.

Now $\hat X \xrightarrow{\varphi} X:=\varphi(\hat X)\subset Z$ is a category quotient and the $G$-invariant $(\sH')^L\xrightarrow{\hat f} \sH^L$ induces a morphism $f: X \to Y$ such that
$$\CD
 (\sH')^L\supset\hat X @>\varphi>> X \\
  @V \hat f VV @V f VV  \\
  \sH^L\supset(\wt\sR_{\omega}^{ss})^L @>\psi>> Y
\endCD$$ is a commutative diagram. However we do not have $${\rm Codim}((\sH')^L\setminus W)\ge 2$$ as required in Proposition \ref{prop2.10}, which was used to prove
$f_*\sO_X=\sO_Y$.

On the other hand, let $(\wt\sR_{\omega, F}^{ss})^L\subset (\wt\sR_{\omega}^{ss})^L$, $(\sH_F')^L \subset (\sH')^L$ be the $G$-invariant open sets of GPS $(E,Q)$ with $E$ being locally free. Then
$${\rm Codim}((\sH_F')^L\setminus W)\ge\underset{1\le i\le 2}{{\rm min}}\left\{(r-1)(g_i-\frac{r+3}{4})+\frac{|I_i|+|I_i'|}{k}\right\}\ge 2$$
by Proposition \ref{prop3.19} (1) and (2), which and Lemma \ref{lem3.21} imply
$$f_*\sO_X=\sO_Y.$$
In fact, let $\hat X_F=\hat X\cap (\sH_F')^L$ and apply Lemma \ref{lem3.21} to the surjections
$$\hat X_F\xrightarrow\varphi X, \quad (\wt\sR_{\omega, F}^{ss})^L\xrightarrow\psi Y,$$
we have $\varphi^G_*\sO_{\hat X_F}=\sO_X$, $\psi^G_*\sO_{(\wt\sR_{\omega, F}^{ss})^L}=\sO_Y$, which means $\forall \,\,U\subset Y$,
$$\sO_Y(U)=H^0(\psi^{-1}(U),\sO_{(\wt\sR_{\omega,F}^{ss})^L})^{inv.}$$ $$\sO_X(f^{-1}(U))=H^0(\varphi^{-1}(f^{-1}(U)),\sO_{\hat X_F})^{inv.}.$$
On the other hand, for the $G$-invariant morphism $(\sH')^L\xrightarrow{\hat f}\sH^L$ with $\hat f_*\sO_{(\sH')^L}=\sO_{\sH^L}$,
by using the fact that $$\hat f^{-1}\psi^{-1}(U)\setminus\hat f^{-1}\psi^{-1}(U)\cap\hat X=\hat f^{-1}\psi^{-1}(U)\cap ((\sH_F')^L\setminus W)$$
has at least codimension two, we have
$$\aligned\sO_Y(U)&=H^0(\psi^{-1}(U),\sO_{(\wt\sR_{\omega,F}^{ss})^L})^{inv.}=H^0(\hat f^{-1}\psi^{-1}(U),\sO_{(\sH_F')^L})^{inv.}\\&=
H^0(\hat f^{-1}\psi^{-1}(U)\cap\hat X,\sO_{(\sH_F')^L})^{inv.}\\&=H^0(\varphi^{-1}(f^{-1}(U)),\sO_{\hat X_F})^{inv.}=\sO_X(f^{-1}(U)).
\endaligned$$
Thus $Y=\sP^L_{\omega}$ is of globally $F$-regular type since $X$ is so.
\end{proof}

\bibliographystyle{plain}

\renewcommand\refname{References}

\end{document}